\newtheorem{theorem}{Theorem}[section]
\newtheorem{defi}[theorem]{Definition}
\newtheorem{lemma}[theorem]{Lemma}
\newtheorem{remark}[theorem]{Remark}
\newtheorem{proposition}[theorem]{Proposition}
\newtheorem{corollary}[theorem]{Corollary}
\def\N{{\mathbb N}}
\def\Z{{\mathbb Z}}
\def\N{\mathbb N}
\def\la{\lambda}
\def\la{\lambda}
\def\ps{\psi}
\def\N{\mathbb{N}}
\def\nn{\nonumber}
\def\N{{\mathbb N}}
\def\K{{\mathbb K}}
\date{}
\begin{document}

\title{Associative superalgebras with homogeneous symmetric structures}
\maketitle
\begin{center}
\author{Imen Ayadi and Sa{\"i}d Benayadi}
\end{center}
\begin{center}
Laboratoire de Math\'ematiques et Applications de Metz, CNRS-UMR 7122, Universit\'e Paul Verlaine-Metz, Ile du Saulcy, F-57045 Metz Cedex 1, FRANCE.\\
\end{center}
\begin{center} 
imen.ayadi@univ-metz.fr; benayadi@univ-metz.fr;  
\end{center}

\begin{abstract}
A homogeneous symmetric structure on an associative superalgebra  $A$ is a non-degenerate, supersymmetric, homogeneous (i.e. even or odd) and associative bilinear form on $A$. In this paper, we show that any associative superalgebra with non null product can not admit simultaneously even-symmetric and odd-symmetric structure. We prove that all simple associative superalgebras admit either even-symmetric or odd-symmetric structure and we  give explicitly, in every case, the homogeneous symmetric structures. We introduce some notions of generalized double extensions in order to give inductive descriptions of even-symmetric associative superalgebras and odd-symmetric associative superalgebras. We obtain also an other interesting description of odd-symmetric associative superalgebras whose even parts are semi-simple bimodules without using the notions of double extensions.\\       
\end{abstract}

\textit{Keywords:} Simple associative superalgebras, Associative superalgebras, Homogeneous symmetric structures, Generalized double extension, Inductive description \\
\textit{MSC:} 16S70, 16W50\\

\section{Introduction}
In this paper, we consider finite dimensional associative superalgebras over an algebraically closed commutative field $\K$ of characteristic zero. 
A homogeneous symmetric associative superalgebra is an associative superalgebra with non-degenerate, supersymmetric, homogeneous and associative bilinear form. Homogeneous symmetric associative superalgebras, more precisely with even-symmetric structures, appeared in \cite{Im} in order to study symmetric Novikov superalgebras. In particular, the notion of generalized double extension of even-symmetric associative superalgebras was introduced in \cite{Im}. This notion is a generalization in case of associative superalgebras of the notion of double extension of symmetric associative algebras introduced in \cite{An}. A different  construction, namely $T^*-$extension, was given in \cite{Bo} to describe nilpotent associative algebras. By using the notion of double extension introduced in \cite{An} and the notion of $T^*-$extension, the descriptions of symmetric associative commutative algebras was obtained in \cite{Am}.\\  

 Finite dimensional simple associative superalgebras over a field $\K$ of characteristic $\neq 2$ was classified in   \cite{Wl}. In the case when $\K$ is an algebraically closed field, one can find the list of finite dimensional simple associative superalgebras  in \cite{Al}. Contrary to what happens in case of Lie superalgebras, in the third section of this paper, we prove that all simple associative superalgebras admit homogeneous symmetric structures.
 More precisely, we give the homogeneous symmetric structure on every simple associative superalgebra.  Next, we prove that if $A$ is an associative superalgebra with non null product, then it can not admit simultaneously even-symmetric and odd-symmetric structure. The fourth section will be devoted to the descriptions of associative superalgebras $A=A_{\bar{0}}\oplus A_{\bar{1}}$ with homogeneous symmetric structures such that $A_{\bar{0}}$ is a semi-simple $A_{\bar{0}}$-bimodule. In particular, in the  case of odd-symmetric associative superalgebras whose even parts are semi-simple bimodules, we give an interesting description without using the notions of double extensions. Finally, in the last section, we recall the notion of the generalized double extension of even-symmetric associative superalgebras introduced in \cite{Im} and we introduce the generalized double extension of odd-symmetric associative superalgebras in order to give inductive descriptions of these two types of associative superalgebras.\\

\section{Definitions and preliminaries}
A superalgebra $A$ is a $\Z_{2}$-graded algebra $A= A_{\bar{0}}\oplus A_{\bar{1}}$ over $\K$ (i.e $A_{\alpha}.A_{\beta}\subseteq A_{\alpha + \beta}$ for $\alpha$, $\beta \in \Z_{2}$). An element $x$ in $A_{\mid x\mid}$, where $\mid x\mid = \bar{0}, \bar{1}$, is said to be homogeneous of degree $\mid x\mid$. An associative superalgebra is just a superalgebra that is associative as an ordinary algebra. Let $Bil(A,\K)$ be the set of all bilinear forms on $A$. Following \cite{Sc}, $Bil(A,\K)$ is a $\Z_{2}$-graded vector space such that: 
$${Bil(A,\K)}_{\gamma}=\left\{B\in Bil(A,\K)\,; B(A_{\alpha},A_{\beta}) \subset {\K}_{\alpha+\beta+\gamma}; \alpha ,\beta\in \Z_{2}\right\}, \forall \gamma\in \Z_{2},$$
where the $\Z_{2}$ graduation of the field $\K$ is given by: $(\K)_{\bar{0}}=\K$ et $(\K)_{\bar{1}}=\left\{0\right\}$.
An element $B$ in ${Bil(A,\K)}_{\gamma}$, where $\gamma = \bar{0},\bar{1}$, is said to be homogeneous of degree $\gamma$. More precisely, if $B\in {Bil(A,\K)}_{\bar{0}}$ (resp. $B\in {Bil(A,\K)}_{\bar{1}}$) then $B$ is called even (resp. odd) bilinear form.\\ 

\begin{defi}
Let $(A,.)$ be an associative superalgebra. A homogeneous bilinear form $B$ on $A$ is 
\begin{enumerate}
\item[(i)] supersymmetric if: $B(x,y)=(-1)^{\mid x\mid \mid y\mid}B(y,x),\, \forall x\in A_{\mid x\mid}, \, y\in A_{\mid y\mid}$;
\item[(ii)] associative if: $B(x.y,z)=B(x,y.z),\, \forall x,y,z\in A$; 
\item[(iii)] non-degenerate if: $x\in A$ satisfies $B(x,y)=0,\ \forall y\in A$, then $x=0$. 
\end{enumerate}
\end{defi}
\begin{defi}
An even-symmetric (resp. odd-symmetric) associative superalgebra $A$ is an associative superalgebra provided with an even (resp. odd), supersymmetric, associative and non-degenerate bilinear form $B$. $A$ provided with $B$ is denoted by $(A,B)$ and $B$ is called an even-symmetric (resp. odd-symmetric) structure on $A$.\\
\end{defi}
 
 In this paper, we study associative superalgebras with homogeneous symmetric structures. A natural question that arises is: Which condition(s) must an associative superalgebra satisfy so that it does not admits simultaneously even-symmetric and odd-symmetric structure? The answer of this question is in the following subsection which we begin by characterizing  even-symmetric associative superalgebras and odd-symmetric associative superalgebras.\\

\subsection{Incompatibility of even-symmetric and odd-symmetric structures}\label{incompatibility}

In the following, we consider that $A$ is an associative superalgebra and we recall that for an associative superalgebra  $A= A_{\bar{0}}\oplus A_{\bar{1}}$, we have:\\
 
$\bullet$ $A_{\bar{0}}$ and $A_{\bar{1}}$ possess a structure of $A_{\bar{0}}$-bimodules by means of  $(L,R)$ where $\forall x\in A_{\bar{0}}$, $L(x):= L_x$ and $R(x):=R_x$ are respectively the left and right multiplications of $A_i$ by $x$, where $i\in\left\{\bar{0},\bar{1}\right\}$.\\

$\bullet$ ${A_{\bar{0}}}^*$ and ${A_{\bar{1}}}^*$ possess a structure of $A_{\bar{0}}$-bimodules by means of  $(L^*,R^*)$ such that for $ x\in A_{\bar{0}},$ $$L^*(x)(f):= f\circ R_x\ \ \mbox{and} \ \ R^*(x)(f):= f\circ L_x,\ \ \forall f\in {A_i}^* \,\mbox{where}\, i\in \left\{\bar{0}, \bar{1}\right\}.$$ 
  
\begin{proposition}\label{caracterisationpaire}
 $A$ is an even-symmetric associative superalgebra if and only if there exist two isomorphisms of $A_{\bar{0}}-$bimodules $\phi_{\bar{0}}: A_{\bar{0}}\longrightarrow {A_{\bar{0}}}^{*}$ and $\phi_{\bar{1}}: A_{\bar{1}}\longrightarrow {A_{\bar{1}}}^{*}$ such that:
\begin{eqnarray*}
\phi_i(x)(y)&=&{(-1)}^i \phi_i(y)(x), \, \forall x,y\in A_i,\, \forall i\in\left\{\bar{0},\bar{1}\right\}\\
\phi_{1}(x.y)(z)&=&\phi_{0}(x)(y.z),\ \forall\, x\in A_{\bar{0}},\, y,z\in A_{\bar{1}}
\end{eqnarray*}
\end{proposition}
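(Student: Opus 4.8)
The plan is to recognise that an even-symmetric structure $B$ and a pair $(\phi_{\bar 0},\phi_{\bar 1})$ as in the statement encode exactly the same data, and that each of the four defining properties of $B$ (even, non-degenerate, supersymmetric, associative) corresponds to a property of the pair. So the proof is essentially a dictionary between the two presentations, with one genuinely computational point at the end.

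For the direct implication I would start from an even-symmetric structure $B$ and first exploit evenness: since $B(A_\alpha,A_\beta)\subseteq\K_{\alpha+\beta}$ and $\K_{\bar 1}=\{0\}$, the subspaces $A_{\bar 0}$ and $A_{\bar 1}$ are $B$-orthogonal, so $B$ splits as the direct sum of its restrictions $B_i:=B|_{A_i\times A_i}$. Define $\phi_i\colon A_i\to A_i^*$ by $\phi_i(x)(y)=B_i(x,y)$. Non-degeneracy of $B$ together with this orthogonal splitting forces each $B_i$ to be non-degenerate (if $x\in A_i$ is $B_i$-orthogonal to $A_i$ it is $B$-orthogonal to all of $A$), hence, in finite dimension, each $\phi_i$ is a linear isomorphism. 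Supersymmetry of $B$ restricted to $A_i$ reads $B_i(x,y)=(-1)^{i}B_i(y,x)$, which is exactly the first displayed identity. Specializing associativity of $B$ to the patterns $(a,x,y)$ with $a\in A_{\bar 0}$, $x,y\in A_i$, and using $|a|=\bar 0$ together with supersymmetry, gives $B_i(ax,y)=B_i(x,ya)$ and $B_i(xa,y)=B_i(x,ay)$; these say precisely that $\phi_i$ intertwines $(L,R)$ on $A_i$ with $(L^*,R^*)$ on $A_i^*$, i.e. $\phi_i$ is an $A_{\bar 0}$-bimodule morphism. Finally associativity in the pattern $(x,y,z)$ with $x\in A_{\bar 0}$, $y,z\in A_{\bar 1}$ is the second displayed identity.

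For the converse I would reverse the construction: given $(\phi_{\bar 0},\phi_{\bar 1})$, define $B$ by $B(x,y)=\phi_i(x)(y)$ for $x,y\in A_i$, $B(A_{\bar 0},A_{\bar 1})=B(A_{\bar 1},A_{\bar 0})=0$, extended bilinearly. Then $B$ is even by construction, the isomorphism property of the $\phi_i$ yields non-degeneracy, and the first displayed identity yields supersymmetry. The remaining task is associativity $B(uv,w)=B(u,vw)$ for all homogeneous $u,v,w$. Both sides vanish unless $|u|+|v|+|w|=\bar 0$, so only four parity patterns survive: $(\bar 0,\bar 0,\bar 0)$ and the three patterns with one even and two odd entries. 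The patterns $(\bar 0,\bar 0,\bar 0)$, $(\bar 0,\bar 1,\bar 1)$ and $(\bar 1,\bar 0,\bar 1)$ follow directly from the bimodule-morphism property and from the second displayed identity.

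The \emph{main obstacle}, and the one point requiring a computation, is the last pattern $(\bar 1,\bar 1,\bar 0)$, namely $B(uv,w)=B(u,vw)$ with $u,v\in A_{\bar 1}$ and $w\in A_{\bar 0}$, which is not directly one of the hypotheses. Here I would chain the available identities: supersymmetry of $\phi_{\bar 0}$ gives $\phi_{\bar 0}(uv)(w)=\phi_{\bar 0}(w)(uv)$; the second displayed identity with $x=w$, $y=u$, $z=v$ gives $\phi_{\bar 0}(w)(uv)=\phi_{\bar 1}(wu)(v)$; and the right-module-morphism property, in the form $\phi_{\bar 1}(vw)(u)=\phi_{\bar 1}(v)(wu)$ (since $vw=R_w(v)$ and $R^*(w)f=f\circ L_w$), together with the antisymmetry of $\phi_{\bar 1}$, identifies $\phi_{\bar 1}(wu)(v)$ with $\phi_{\bar 1}(u)(vw)$. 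This establishes the missing pattern; all associativity relations then hold, $B$ is an even-symmetric structure, and the equivalence is proved.
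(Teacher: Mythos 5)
Your proof is correct and takes essentially the same route as the paper: in the forward direction define $\phi_i(x)=B|_{A_i\times A_i}(x,\cdot)$ and translate non-degeneracy, supersymmetry and associativity of $B$ into the stated properties, and in the converse reconstruct $B$ from the pair $(\phi_{\bar{0}},\phi_{\bar{1}})$ with $B(A_{\bar{0}},A_{\bar{1}})=\{0\}$. If anything, your converse is more thorough than the paper's, which explicitly verifies associativity only for the patterns $(\bar{0},\bar{0},\bar{0})$ and $(\bar{1},\bar{0},\bar{1})$ and leaves the patterns $(\bar{0},\bar{1},\bar{1})$ (immediate from the hypothesis $\phi_{1}(x.y)(z)=\phi_{0}(x)(y.z)$) and $(\bar{1},\bar{1},\bar{0})$ unaddressed; your chain through the symmetry of $\phi_{\bar{0}}$, the displayed compatibility identity, the right-module property and the antisymmetry of $\phi_{\bar{1}}$ settles that last pattern explicitly.
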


\begin{proof}
Let $B$ be an even-symmetric structure on $A$. It is clear that $B_{\bar{0}}:= {B\mid}_{A_{\bar{0}}\times A_{\bar{0}}}$ (resp. $B_{\bar{1}}:= {B\mid}_{A_{\bar{1}}\times A_{\bar{1}}}$) is symmetric (resp. antisymmetric), non-degenerate bilinear form over  $A_{\bar{0}}$ (resp. $A_{\bar{1}}$). For  $i\in\left\{\bar{0}, \bar{1}\right\}$, we consider the following map $\phi_{i}: A_{i}\longrightarrow {A_{i}}^{*}$ defined by $\phi_{i}(x):= B_i(x,.)$, $\forall\, x\in A_{i}$. We can easily check that $\forall\, i\in\left\{\bar{0}, \bar{1}\right\}$, $\phi_{i}$ is an isomorphism of vector spaces such that: $\phi_{1}(x.y)(z)=\phi_{0}(x)(y.z),\ \forall\, x\in A_{\bar{0}},\, y,z\in A_{\bar{1}}$ and $\phi_i(x)(y)={(-1)}^i \phi_i(y)(x), \, \forall x,y\in A_i$. Finally to prove that $\phi_{i}$ is a morphism of $A_{\bar{0}}-$bimodules, let us consider $y\in A_{\bar{0}}$, $x,z\in A_{i}$,
$$\phi_{i}(R_y(x))(z)=\phi_{i}(x.y)(z)=B_i(x.y,z)=(B_i(x,.)\circ L_y)(z)=R^*(y)(\phi_{i}(x))(z),$$
$$\phi_{i}(L_y(x))(z)=\phi_{i}(y.x)(z)=B_i(y.x,z)=B_i(x,z.y)= (\phi_{i}(x)\circ R_y)(z)= L^*(y)(\phi_{i}(x))(z).$$ 
Consequently, $\forall i\in \left\{\bar{0},\bar{1}\right\}$, $\phi_{i}$ is an isomorphism of $A_{\bar{0}}-$bimodules. Conversely, suppose that for $i\in \left\{\bar{0},\bar{1}\right\}$, there exist an isomorphism of $A_{\bar{0}}-$bimodules $\phi_{i}: A_{i}\longrightarrow A_{i}^*$ such that $\phi_i(x)(y)={(-1)}^i \phi_i(y)(x), \, \forall x,y\in A_i$ and  
$\phi_{1}(x.y)(z)=\phi_{0}(x)(y.z),\ \forall\, x\in A_{\bar{0}},\, y,z\in A_{\bar{1}}$. We define the following bilinear form on $A$ by: $B(x,y):=\phi_{i}(x)(y)$,$\forall x,y\in A_i$ and $B(A_{\bar{0}},A_{\bar{1}})=\left\{0\right\}$. By definition, $B$ is even, non-degenerate and supersymmetric such that  for $x,y,z\in A_{\bar{0}}$, $$B(x.y,z)=\phi_0(R_y(x))(z)=(\phi_{0}(x)\circ L_y)(z)=B(x,y.z).$$ 
And for $x,z\in A_{\bar{1}}$, $y\in A_{\bar{0}}$, we have $$B(x.y,z)=\phi_1(R_y(x))(z)=(\phi_1(x)\circ L_y)(z)=B(x,y.z).$$ 
Hence, we conclude that $(A,B)$ is an even-symmetric associative superalgebra.\\
\end{proof}

\begin{proposition}\label{caracterisationimpaire}
 $A$ is an odd-symmetric associative superalgebra if and only if there exist
an isomorphism of $A_{\bar{0}}-$bimodules $\phi: A_{\bar{1}}\longrightarrow {A_{\bar{0}}}^{*}$ such that $\phi(x)(y.z)= \phi(z)(x.y)$, $\forall x, y, z\in A_{\bar{1}}$.
\end{proposition}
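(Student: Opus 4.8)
The plan is to mirror closely the proof of Proposition \ref{caracterisationpaire}, exploiting the fact that an odd bilinear form is forced to vanish on $A_{\bar{0}}\times A_{\bar{0}}$ and on $A_{\bar{1}}\times A_{\bar{1}}$ (since $\K_{\bar{1}}=\{0\}$). Thus all the information carried by $B$ sits in its restriction to $A_{\bar{1}}\times A_{\bar{0}}$, which is precisely why a single isomorphism $A_{\bar{1}}\to {A_{\bar{0}}}^{*}$, rather than two, suffices.

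For the direct implication, assume $(A,B)$ is odd-symmetric. Because $B$ is odd, $B(A_{\bar{0}},A_{\bar{0}})=B(A_{\bar{1}},A_{\bar{1}})=\{0\}$, so the non-degeneracy of $B$ is equivalent to the non-degeneracy, in both arguments, of the induced pairing $A_{\bar{1}}\times A_{\bar{0}}\to\K$. I would then set $\phi(x):={B(x,\cdot)}{\mid}_{A_{\bar{0}}}$ for $x\in A_{\bar{1}}$. Non-degeneracy of this pairing yields at once that $\phi$ is injective and that $\dim A_{\bar{1}}=\dim A_{\bar{0}}=\dim {A_{\bar{0}}}^{*}$, so $\phi$ is a linear isomorphism onto ${A_{\bar{0}}}^{*}$. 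That $\phi$ is a morphism of $A_{\bar{0}}$-bimodules is the same computation as in Proposition \ref{caracterisationpaire}: for $y\in A_{\bar{0}}$, $x\in A_{\bar{1}}$, $z\in A_{\bar{0}}$ one checks $\phi(R_y(x))(z)=B(x.y,z)=B(x,y.z)=R^*(y)(\phi(x))(z)$ using associativity, and $\phi(L_y(x))(z)=B(y.x,z)=B(x,z.y)=L^*(y)(\phi(x))(z)$ using associativity together with supersymmetry (the relevant sign $(-1)^{\bar{1}\,\bar{0}}$ being trivial). Finally, the claimed identity follows for $x,y,z\in A_{\bar{1}}$ from $\phi(x)(y.z)=B(x,y.z)=B(x.y,z)=B(z,x.y)=\phi(z)(x.y)$, where the inner equalities use associativity and supersymmetry.

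For the converse, given such a $\phi$, I would define $B$ by $B(x,a)=B(a,x):=\phi(x)(a)$ for $x\in A_{\bar{1}}$, $a\in A_{\bar{0}}$, together with $B(A_{\bar{0}},A_{\bar{0}})=B(A_{\bar{1}},A_{\bar{1}})=\{0\}$. By construction $B$ is odd, supersymmetric (the only nontrivial sign being $(-1)^{\bar{0}\,\bar{1}}=1$), and non-degenerate (by the bijectivity of $\phi$). The real content is associativity $B(u.v,w)=B(u,v.w)$, which I would verify on homogeneous $u,v,w$. Since $B$ pairs only opposite degrees, both sides vanish unless $\mid u\mid+\mid v\mid+\mid w\mid=\bar{1}$, leaving four cases. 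The three cases with exactly one odd factor reduce, via the bimodule-morphism relations $\phi\circ R_y=R^*(y)\circ\phi$ and $\phi\circ L_y=L^*(y)\circ\phi$, to tautologies. The genuinely new case is $u,v,w\in A_{\bar{1}}$, where associativity reads $\phi(w)(u.v)=\phi(u)(v.w)$ and is \emph{exactly} the hypothesis $\phi(x)(y.z)=\phi(z)(x.y)$.

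The one step deserving care, and the only place where the defining identity is truly needed, is this last all-odd instance of associativity; everything else is the bimodule bookkeeping already performed in Proposition \ref{caracterisationpaire}, together with the observation that oddness annihilates the diagonal blocks. The main (minor) obstacle is keeping the sign conventions straight, although here every relevant sign turns out to be $+1$.
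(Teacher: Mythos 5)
Your proof is correct and follows essentially the same route as the paper: in both directions you define $\phi(x)=B(x,\cdot)$ (resp.\ reconstruct $B$ from $\phi$) and translate associativity of $B$ into the bimodule-morphism relations plus the identity $\phi(x)(y.z)=\phi(z)(x.y)$. If anything, your converse is slightly more careful than the paper's, which explicitly checks only two of the four homogeneous cases of associativity and leaves the all-odd case (the one using the hypothesis) implicit.
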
 

\begin{proof}
 Let $B$ be an odd-symmetric structure on $A$. We consider the following linear map $\phi: A_{\bar{1}}\longrightarrow A_{{\bar{0}}}^*$ defined by $\phi(x):= B(x,.)$, $\forall x\in A_{\bar{1}}$. As $B$ is odd and non-degenerate bilinear form, then $\phi$ is an isomorphism of vector spaces. In addition, by the associativity of $B$, it is clear that $\phi$ satisfies $\phi(x)(y.z)= \phi(z)(x.y)$, $\forall x, y, z\in A_{\bar{1}}$. Finally, to prove that $\phi$ is a morphism of  $A_{\bar{0}}$-bimodules, let us consider $y,z \in A_{\bar{0}}$ and $x\in A_{\bar{1}}$, then  
$$\phi(R_{y}(x))(z)= B(x.y,z)= B(x,y.z)= B(x, L_{y}(z))= (B(x,.)\circ L_{y})(z)= R^*(y)(\phi(x))(z)$$
$$\phi(L_{y}(x))(z)= B(y.x,z)= B(x,z.y)= B(x, R_{y}(z))= (B(x,.)\circ R_{y})(z)= L^*(y)(\phi(x))(z).$$
Consequently, $\phi$ is an isomorphism of $A_{\bar{0}}-$bimodules. Conversely, let $\phi: A_{\bar{1}}\longrightarrow A_{{\bar{0}}}^*$ be an isomorphism of $A_{\bar{0}}-$bimodules such that $\phi(x)(y.z)= \phi(z)(x.y)$, $\forall x,y,z\in A_{\bar{1}}$. We consider the following bilinear form defined on $A$ by: $B(A_{\bar{0}}, A_{\bar{0}})= B(A_{\bar{1}}, A_{\bar{1}})= \left\{0\right\}$ and  $B(x,y)=B(y,x)= \phi(x)(y)$, $\forall x\in A_{\bar{1}}, y\in A_{\bar{0}}$. By definition, $B$ is odd and non-degenerate. In addition for $x\in A_{\bar{1}}, y,z \in A_{\bar{0}}$ we have: 
$$B(x.y,z)= \phi(x.y)(z)= \phi(R_y(x))(z)= (\phi(x)\circ L_y)(z)= \phi(x)(y.z)=B(x,y.z),$$
$$B(z, y.x)= B(y.x,z)= \phi(L_y(x))(z)= (\phi(x)\circ R_y)(z)= B(x, z.y)= B(z.y,x).$$
Then, we conclude that $(A,B)$ is an odd-symmetric associative superalgebra.\\
\end{proof}

By Proposition \ref{caracterisationpaire} and Proposition \ref{caracterisationimpaire}, we have the following proposition that we find which condition(s) must an associative superalgebra satisfy so that it does not possess simultaneously  an even-symmetric and an odd-symmetric structure.\\

\begin{proposition}\label{incompatible}
Any associative superalgebra with non-null product does not admits simultaneously an even-symmetric and an odd-symmetric structure.
\end{proposition}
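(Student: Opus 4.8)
The plan is to argue by contraposition: I assume that $A$ carries \emph{both} an even-symmetric structure $B$ and an odd-symmetric structure $B'$, and I show that then the product of $A$ is null. The idea is to package the transition between the two forms into a single odd linear automorphism and then to exploit associativity and supersymmetry just enough times that the Koszul signs force every product to vanish.

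First I would introduce the map $\psi\colon A\to A$ defined by $B'(x,y)=B(\psi(x),y)$ for all $x,y\in A$. Since $B$ is non-degenerate this determines $\psi$ uniquely and linearly, and since $B'$ is also non-degenerate $\psi$ is bijective. Because $B$ is even while $B'$ is odd, $\psi$ is odd, i.e. $\psi(A_{\bar{0}})\subseteq A_{\bar{1}}$ and $\psi(A_{\bar{1}})\subseteq A_{\bar{0}}$; indeed $B(\psi(x),y)=B'(x,y)=0$ for $y$ of the same parity as $x$, and non-degeneracy of $B$ on each homogeneous component forces $\psi$ to shift degrees. Next I would extract three relations. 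Using associativity of $B'$ and then of $B$, one gets the right-module identity $\psi(x\cdot y)=\psi(x)\cdot y$. Combining supersymmetry of $B'$ with that of $B$ and the degree shift $|\psi(x)|=|x|+\bar{1}$, one gets the adjunction $B(\psi(x),y)=(-1)^{|x|}B(x,\psi(y))$. Feeding this adjunction back into associativity of $B$ (together with the right-module identity) yields the left-module identity $\psi(x\cdot y)=(-1)^{|x|}x\cdot\psi(y)$, whence $\psi(x)\cdot y=(-1)^{|x|}x\cdot\psi(y)$.

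The crux is then to evaluate $\psi^{2}(x)\cdot y$ in two incompatible ways. On one hand, applying the right- and left-module identities twice gives $\psi^{2}(x\cdot y)=\psi^{2}(x)\cdot y$ and $\psi^{2}(x\cdot y)=x\cdot\psi^{2}(y)$, hence $\psi^{2}(x)\cdot y=x\cdot\psi^{2}(y)$. On the other hand, applying the identity $\psi(a)\cdot b=(-1)^{|a|}a\cdot\psi(b)$ first with $a=\psi(x)$ and then with $b=\psi(y)$ contributes the signs $(-1)^{|x|+1}$ and $(-1)^{|x|}$, whose product is $-1$, giving $\psi^{2}(x)\cdot y=-\,x\cdot\psi^{2}(y)$. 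Comparing the two evaluations yields $2\,x\cdot\psi^{2}(y)=0$, so $x\cdot\psi^{2}(y)=0$ because $\operatorname{char}\K=0$. Finally, since $\psi^{2}$ is bijective, $\psi^{2}(y)$ ranges over all of $A$, and therefore $A\cdot A=\{0\}$, contradicting non-nullity of the product.

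The main obstacle I expect is the bookkeeping of the Koszul signs: the adjunction of $\psi$ and both module identities each carry a factor $(-1)^{|x|}$, and the whole argument hinges on the fact that the "symmetric" computation and the "iterated" computation disagree by exactly one sign. I would therefore verify the adjunction $B(\psi(x),y)=(-1)^{|x|}B(x,\psi(y))$ with particular care and track the degree shift $|\psi(x)|=|x|+\bar{1}$ at every application of $\psi$. It is worth noting that only $\operatorname{char}\K\neq 2$ is actually used, to cancel the factor $2$, and this is granted by hypothesis.
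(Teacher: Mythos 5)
Your proof is correct, and it takes a genuinely different route from the paper's. The paper argues through its two characterization results (Propositions \ref{caracterisationpaire} and \ref{caracterisationimpaire}): it composes the two bimodule isomorphisms $A_{\bar{0}}\cong A_{\bar{0}}^*$ and $A_{\bar{1}}\cong A_{\bar{0}}^*$ to get an $A_{\bar{0}}$-bimodule isomorphism $\psi\colon A_{\bar{0}}\to A_{\bar{1}}$, pulls back $B\vert_{A_{\bar{1}}\times A_{\bar{1}}}$ along $\psi$ to obtain an antisymmetric, associative, non-degenerate form $f$ on $A_{\bar{0}}$, concludes ${A_{\bar{0}}}^2\subseteq \mathrm{Ker}(f)=\{0\}$ from the antisymmetry--associativity clash, and then disposes of $A_{\bar{0}}A_{\bar{1}}$, $A_{\bar{1}}A_{\bar{0}}$ and ${A_{\bar{1}}}^2$ by separate computations that it leaves to the reader. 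You instead work globally with a single odd intertwiner $\psi\colon A\to A$ defined by $B'(x,y)=B(\psi(x),y)$, derive the two module identities $\psi(x\cdot y)=\psi(x)\cdot y$ and $\psi(x\cdot y)=(-1)^{|x|}x\cdot\psi(y)$ together with the adjunction $B(\psi(x),y)=(-1)^{|x|}B(x,\psi(y))$ (all of which check out, including the signs), and then extract the contradiction from the fact that iterating these identities in two different orders on $\psi^{2}(x)\cdot y$ produces answers differing by exactly one Koszul sign; bijectivity of $\psi^{2}$ then kills the entire product $A\cdot A$ in one stroke. Both proofs exploit the same underlying phenomenon --- the parity shift between an even and an odd form forces a sign obstruction --- but your version is self-contained (it does not need the two characterization propositions), treats all parity components uniformly rather than component by component, and makes explicit that only $\mathrm{char}\,\K\neq 2$ is used; the paper's version, on the other hand, recycles machinery it needs anyway for the structure theory developed later, at the cost of an unproved ``simple computation'' at the end.
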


\begin{proof}
Supposing that $A$ is endowed with an even-symmetric and an odd-symmetric structure which are denoted respectively by $B$ and $\bar{B}$ and proving that $A$ is with null product. Following Proposition \ref{caracterisationpaire} (resp. Proposition \ref{caracterisationimpaire}), we have $A_{\bar{0}}$ and ${A_{\bar{0}}}^*$ (resp. $A_{\bar{1}}$ and ${A_{\bar{0}}}^*$ ) are isomorphic as $A_{\bar{0}}$-bimodules by means of $\phi_0$ (resp. $\phi$). Consequently, we have $A_{\bar{0}}$ and $A_{\bar{1}}$ are isomorphic as $A_{\bar{0}}$-bimodules by means of $\psi= \phi^-\circ \phi_0$, where $\phi^-$ is the bijective map of $\phi$. So, we can define the following bilinear form $f$ on $A$ as follows : $f(x,y)=B_{\bar{1}}(\psi(x), \psi(y)), \, \forall x,y\in A_{\bar{0}}$, where $B_{\bar{1}}= {B\mid}_{A_{\bar{1}}\times A_{\bar{1}}}$. The fact that $B_{\bar{1}}$ is an antisymmetric non-degenerate bilinear form on $A_{\bar{1}}$ and $\psi$ is an isomorphism implies that $f$ is an antisymmetric non-degenerate bilinear form on $A_{\bar{0}}$. Moreover, $f$ is associative since for $x,y,z\in A_{\bar{0}}$, we have  
 \begin{eqnarray*} f(x.y,z)&=&B_{\bar{1}}(\psi(x.y), \psi(z))=B_{\bar{1}}(\psi(R_y(x)), \psi(z))=B_{\bar{1}}(R_y(\psi(x)), \psi(z))\\ 
&=&B_{\bar{1}}(\psi(x), L_y(\psi(z)))=B_{\bar{1}}(\psi(x), \psi(L_y(z)))=B_{\bar{1}}(\psi(x), \psi(y.z))=f(x,y.z). 
\end{eqnarray*}

Using the antisymmetry and the associativity of $f$, we deduce that ${A_{\bar{0}}}^2\subseteq Ker(f)$. In addition, by the non-degeneracy of $f$ we obtain ${A_{\bar{0}}}^2=\left\{0\right\}$. Finally, by a simple computation where we use  the associativity of $\bar{B}$ and $B$, we show easily that $A_{\bar{0}}A_{\bar{1}}=\left\{0\right\}=A_{\bar{1}}A_{\bar{0}}$ and  ${A_{\bar{1}}}^2=\left\{0\right\}$.\\
\end{proof}

In the sequel, we consider associative superalgebras with non-null product unless otherwise stated. Meaning that we don't  consider associative superalgebras with simultaneously even-symmetric and odd-symmetric structures.\\

\begin{remark}\label{rmqincompatible}
Recall that in \cite{Sc}, classical Lie superalgebras don't have simultaneously quadratic and odd-quadratic structure. Later, in \cite{He}, it has been proved that any perfect Lie superalgebra with even part is a reductive Lie algebra does not possess simultaneously  quadratic and odd-quadratic structure. More generally, we can see in the same way of the previous proposition , that any non abelian Lie superalgebra does not possess simultaneously a quadratic  and an odd-quadratic structures.\\
\end{remark}

\subsection{Some definitions and results independent of the parity of symmetric structure}

\begin{defi}
Let $(A,B)$ be an associative superalgebra with homogeneous symmetric structure and $I$ a graded two-sided ideal of $A$.
\begin{enumerate}
\item[(i)] $I$ is called non-degenerate (resp. degenerate) if the restriction of $B$ to $I\times I$ is a non-degenerate  (resp. degenerate) bilinear form;
\item[(ii)] $(A,B)$ is called $B$-irreducible if $A$ does not contains non-degenerate graded two-sided ideal other than $\left\{0\right\}$ and $A$; 
\item[(iii)] $I$ is called $B$-irreducible if $I$ is non-degenerate and it does not contains non-degenerate graded two-sided ideal of $A$ other than $\left\{0\right\}$ and $I$;   
\item[(iv)] $I$ is called totally isotropic if $B(I,I)=\left\{0\right\}$.\\
\end{enumerate} 
\end{defi}

\begin{lemma}\label{Iideal}
Let $(A,B)$ be an associative superalgebra with homogeneous symmetric structure, $I$ a graded two-sided ideal of $A$ and $J$ its orthogonal with respect to $B$. Then, 
\begin{enumerate}
\item[(i)] $J$ is a graded two-sided ideal of $A$ and $I.J= J.I= \left\{0\right\}$,
\item[(ii)] if  $I$ is non-degenerate, then $A = I\oplus J$ and  $J$ is also non-degenerate, 
\item[(iii)] if $I$ is semi-simple, then $I$ is non-degenerate.
\end{enumerate}
\end{lemma}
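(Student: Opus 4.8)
The plan is to exploit the three defining properties of the structure $B$ --- homogeneity, associativity together with supersymmetry, and non-degeneracy --- essentially one for each assertion, and to handle the only non-formal part, (iii), by exhibiting a square-zero ideal inside $I$.

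First I would establish (i). That $J=I^{\perp}$ is \emph{graded} follows from the homogeneity of $B$: writing $x=x_{\bar 0}+x_{\bar 1}\in J$ and pairing against a homogeneous $y\in I$, the two scalars $B(x_{\bar 0},y)$ and $B(x_{\bar 1},y)$ lie in different homogeneous components of $\K$; since $\K_{\bar 1}=\{0\}$ one of them is automatically zero, and the relation $B(x,y)=0$ then forces the other to vanish as well, so $x_{\bar 0},x_{\bar 1}\in J$. That $J$ is a two-sided ideal is where associativity and supersymmetry enter: for $a\in A$, $w\in J$ and any $y\in I$ one has $B(w\cdot a,y)=B(w,a\cdot y)=0$, because $a\cdot y\in I$ and $w\in J$, so $w\cdot a\in J$; and $B(y,a\cdot w)=B(y\cdot a,w)=0$, because $y\cdot a\in I$ and $w\in J$, which with supersymmetry gives $a\cdot w\in J$. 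Finally $I\cdot J=J\cdot I=\{0\}$ comes from non-degeneracy on all of $A$: for $x\in I$, $w\in J$ and any $z\in A$, associativity rewrites $B(x\cdot w,z)$ as $B(x,w\cdot z)$, which is $0$ since $w\cdot z\in J$, whence $x\cdot w=0$; the computation for $w\cdot x$ is symmetric.

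For (ii) I would argue by linear algebra. Non-degeneracy makes $x\mapsto B(x,\cdot)$ an isomorphism $A\to A^{*}$, under which $J=I^{\perp}$ corresponds to the annihilator of $I$; hence $\dim J=\dim A-\dim I$. If moreover $I$ is non-degenerate, then $I\cap J=\{0\}$ --- an element of $I\cap J$ is orthogonal to all of $I$, hence zero by non-degeneracy of $B|_{I\times I}$ --- and the dimension count yields $A=I\oplus J$. Non-degeneracy of $J$ is then immediate: any $w\in J$ orthogonal to $J$ is also orthogonal to $I$ (since $w\in J$), hence to $A=I\oplus J$, so $w=0$.

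The heart of the lemma is (iii), and the idea is to read off a nilpotent ideal. Set $I_{0}:=I\cap J$, which is exactly the radical of the restriction $B|_{I\times I}$; being the intersection of the two graded two-sided ideals $I$ and $J$, it is itself a graded two-sided ideal of $A$, in particular of $I$. By part (i) we have $I\cdot J=\{0\}$, and since $I_{0}\subseteq I$ and $I_{0}\subseteq J$ this forces $I_{0}\cdot I_{0}\subseteq I\cdot J=\{0\}$; thus $I_{0}$ is a graded two-sided ideal of $I$ with $I_{0}^{2}=\{0\}$. As $I$ is semi-simple it contains no nonzero nilpotent graded ideal, whence $I_{0}=\{0\}$, i.e. $B|_{I\times I}$ is non-degenerate and $I$ is non-degenerate. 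The only genuinely non-formal step is this last one --- the vanishing of square-zero ideals in a semi-simple associative superalgebra --- while everything preceding it is a direct manipulation of the three axioms for $B$.
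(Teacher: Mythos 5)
Your proof is correct and follows essentially the same route as the paper: the key step (iii) is identical in spirit, namely recognizing $I\cap J$ as a square-zero graded ideal of the semi-simple ideal $I$, hence zero (the paper re-derives $(I\cap J)^2=\{0\}$ from associativity and non-degeneracy of $B$, while you obtain it slightly more directly from $I\cdot J=\{0\}$ in part (i)). The only other difference is one of completeness: the paper cites its earlier work for (i) and declares (ii) clear, whereas you spell out these standard arguments in full.
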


\begin{proof}
The proof of assertion $(i)$ is similar to the proof of Lemma III.5 in \cite{Im} in case of even-symmetric associative superalgebras. The assertion $(ii)$ is clear. $(iii)$ By the associativity of $B$, we have $B((I\cap J).(I\cap J),A)=B(I\cap J, (I\cap J).A)=\left\{0\right\}$ and which imply that $I\cap J=\left\{0\right\}$ since $I$ is semi-simple. Consequently, we have $A= I\oplus J$ and so by the non-degeneracy of $B$, we conclude that $I$ is non-degenerate.\\
\end{proof}

The following proposition is an immediate consequence of Lemma \ref{Iideal}.\\
 
\begin{proposition}\label{depimp}
Every associative superalgebra with homogeneous symmetric structure is an orthogonal direct sums of $B$-irreducible graded two-sided ideals.\\
\end{proposition}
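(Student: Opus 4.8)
The statement to prove is Proposition \ref{depimp}: every associative superalgebra with homogeneous symmetric structure decomposes as an orthogonal direct sum of $B$-irreducible graded two-sided ideals. Let me think about the natural inductive approach.

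\medskip

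The plan is to argue by induction on the dimension of $A$, using Lemma \ref{Iideal} as the engine that converts a nondegenerate ideal into an orthogonal splitting. First I would handle the base case: if $A$ itself is $B$-irreducible, then by definition we are already done, since $A$ is an orthogonal direct sum consisting of the single $B$-irreducible ideal $A$. So I may assume $A$ is \emph{not} $B$-irreducible. By the definition of $B$-irreducibility (item (ii) of the preceding definition), this means $A$ contains a nondegenerate graded two-sided ideal $I$ with $I \neq \{0\}$ and $I \neq A$.

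\medskip

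Given such an $I$, I would invoke Lemma \ref{Iideal}(ii): letting $J$ be the orthogonal of $I$ with respect to $B$, the nondegeneracy of $I$ yields $A = I \oplus J$ with $J$ also a nondegenerate graded two-sided ideal, and moreover $I.J = J.I = \{0\}$ by Lemma \ref{Iideal}(i). The decomposition $A = I \oplus J$ is orthogonal by construction, and because $I.J = J.I = \{0\}$, the restrictions $B|_{I\times I}$ and $B|_{J\times J}$ are each nondegenerate, homogeneous, supersymmetric and associative on $I$ and $J$ respectively; that is, $(I, B|_I)$ and $(J, B|_J)$ are themselves associative superalgebras with homogeneous symmetric structures. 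Since $0 < \dim I, \dim J < \dim A$, the induction hypothesis applies to each of $(I, B|_I)$ and $(J, B|_J)$, expressing each as an orthogonal direct sum of $B$-irreducible graded two-sided ideals of $I$ (resp. of $J$).

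\medskip

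The one genuine point that needs care — and the step I expect to be the main obstacle — is verifying that a $B$-irreducible graded two-sided ideal of the subalgebra $I$ is again a $B$-irreducible graded two-sided ideal of the whole superalgebra $A$. This requires the observation that, because $A = I \oplus J$ with $I.J = J.I = \{0\}$, any two-sided ideal of $I$ is automatically a two-sided ideal of $A$ (the $J$-component annihilates it on both sides and $A = I \oplus J$), and that $B$-irreducibility is inherited since $B|_I$ is just the restriction of $B$ to $I$ and a nondegenerate graded ideal of $A$ contained in $I$ is the same thing as a nondegenerate graded ideal of $I$. Once this transitivity of $B$-irreducibility through the orthogonal splitting is established, concatenating the two orthogonal decompositions of $I$ and $J$ produces an orthogonal direct sum decomposition of $A$ into $B$-irreducible graded two-sided ideals, completing the induction.
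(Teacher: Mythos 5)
Your proof is correct and is essentially the paper's own approach: the paper presents Proposition \ref{depimp} as an immediate consequence of Lemma \ref{Iideal}, and your induction on dimension, splitting off a proper non-degenerate ideal $I$ and applying Lemma \ref{Iideal}(i)--(ii) to get the orthogonal decomposition $A=I\oplus J$, is precisely the argument left implicit there. The point you single out for care --- that ideals of $I$ are ideals of $A$ because $I.J=J.I=\left\{0\right\}$, so that $B$-irreducibility transfers between $I$ and $A$ --- is indeed the only detail needing verification, and you handle it correctly.
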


Following the proposition above, the description of even-symmetric or odd-symmetric associative superalgebras amounts to the description of those that are $B$-irreducible. Among even-symmetric or odd-symmetric $B$-irreducible associative  superalgebras, we find those that are $B$-irreducible non-simple. In \cite{Im}, 
we introduced the notion of the generalized double extension of even-symmetric $B$-irreducible non-simple associative superalgebras to describe symmetric Novikov superalgebras. The minimal graded two-sided ideal played an important role for the introduction of this notion. In the following, we will proceed similar to \cite{Im} to give an inductive description of homogeneous-symmetric associative superalgebras. For this reason, we start by giving a characterization of minimal graded two-sided ideal of associative superalgebras with homogeneous structures.\\ 

\begin{defi}
Let $A$ be an associative superalgebra and $I$ a graded two-sided ideal of $A$. $I$ is called minimal if $I\notin \left\{\left\{0\right\}, A\right\}$ and if $J$ is a graded two-sided ideal of $A$ such that $J\subseteq I$ then $J\in \left\{\left\{0\right\}, I\right\}$.\\
\end{defi}

\begin{lemma}\label{minimal}
Let $(A,B)$ be an associative superalgebra with homogeneous symmetric structure and $I$ a minimal graded two-sided ideal of $A$, then $I$ is simple or $I$ is with null product such that $AI=I=IA$ or $I= \K x $ where $x$ is a homogeneous element of $Ann(A)$.\\
\end{lemma}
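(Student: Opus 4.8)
The plan is to analyse the square $I^{2}=I\cdot I$, which is again a graded two-sided ideal of $A$ contained in $I$; by minimality it equals either $I$ or $\{0\}$, and these two alternatives will produce, respectively, the \emph{simple} case and the two \emph{null product} cases of the trichotomy. Throughout I would write $I^{\perp}$ for the orthogonal of $I$ with respect to $B$ and freely use Lemma~\ref{Iideal}(i), namely that $I^{\perp}$ is a graded two-sided ideal with $I\cdot I^{\perp}=I^{\perp}\cdot I=\{0\}$, together with the associativity and non-degeneracy of $B$. Note also that $AI$ and $IA$ are graded two-sided ideals of $A$ contained in $I$, so by minimality each of them is $\{0\}$ or $I$.

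First suppose $I^{2}=I$; here I would show that $I$ is non-degenerate and then simple. The intersection $I\cap I^{\perp}$ is a graded two-sided ideal of $A$ inside $I$, hence equals $\{0\}$ or $I$. If it were $I$, then $B(I,I)=\{0\}$, and using $I=I^{2}$ and associativity one gets $B(I,A)=B(I\cdot I,A)=B(I,I\cdot A)\subseteq B(I,I)=\{0\}$, contradicting non-degeneracy since $I\neq\{0\}$. Thus $I\cap I^{\perp}=\{0\}$, so by Lemma~\ref{Iideal}(ii) we have $A=I\oplus I^{\perp}$ with $I\cdot I^{\perp}=I^{\perp}\cdot I=\{0\}$. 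This last relation is the crux: it forces every graded two-sided ideal $J$ of the algebra $I$ to be automatically a graded two-sided ideal of $A$, because writing $a=u+v$ with $u\in I$, $v\in I^{\perp}$ gives $a\cdot j=u\cdot j\in J$ and $j\cdot a=j\cdot u\in J$ for all $j\in J$. Minimality of $I$ then leaves $J\in\{\{0\},I\}$, and since $I^{2}=I\neq\{0\}$ this says precisely that $I$ is a simple associative superalgebra.

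Next suppose $I^{2}=\{0\}$. The decisive identity is $B(AI,A)=B(A,IA)$, coming from $B(a\cdot x,b)=B(a,x\cdot b)$. It rules out the \emph{mixed} configurations: if $AI=I$ while $IA=\{0\}$ then $B(I,A)=B(AI,A)=B(A,IA)=\{0\}$, and symmetrically if $AI=\{0\}$ while $IA=I$ then $B(A,I)=\{0\}$, each contradicting non-degeneracy. Hence either $AI=IA=I$, which together with $I^{2}=\{0\}$ is exactly the null-product case with $AI=I=IA$; or else $AI=IA=\{0\}$. In the latter situation every $x\in I$ satisfies $Ax=xA=\{0\}$, so $I\subseteq Ann(A)$; choosing a nonzero homogeneous $x\in I$, the line $\K x$ is a graded two-sided ideal of $A$ contained in $I$, whence $I=\K x$ by minimality, giving the third case.

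The routine parts are the verifications that $I^{2}$, $AI$, $IA$ and $I^{\perp}$ are graded two-sided ideals; the step I expect to be most delicate is the case $I^{2}=I$, where one must upgrade ``minimal as an ideal of $A$'' to ``simple as a superalgebra'', and this genuinely uses the symmetric structure through the orthogonal splitting $A=I\oplus I^{\perp}$. The other place where the form is indispensable is the elimination of the mixed cases via $B(AI,A)=B(A,IA)$: without associativity and non-degeneracy these configurations could a priori occur and would lie outside the stated trichotomy.
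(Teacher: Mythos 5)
Your proof is correct, and it is built from the same ingredients as the paper's (minimality-forced dichotomies, Lemma \ref{Iideal}, the orthogonal splitting $A=I\oplus I^{\perp}$, and ruling out the mixed $AI$/$IA$ configurations via associativity and non-degeneracy of $B$), but it pivots the case analysis differently, and the difference is worth recording. The paper splits first on $I\cap J$ (where $J=I^{\perp}$): if $I\cap J=\{0\}$ it concludes at once that $I$ is simple, and if $I\cap J=I$ it obtains $I^{2}=\{0\}$ from Lemma \ref{Iideal} and then performs the $AI$/$IA$ analysis, which it compresses to ``a simple computation'' and which you spell out in full. You split instead on $I^{2}\in\{\{0\},I\}$, and this yields a cleaner argument at exactly the point where the paper is loose: in your simple branch you know $I^{2}=I\neq\{0\}$ beforehand, so ``no proper nonzero graded two-sided ideals'' genuinely upgrades to ``simple'', whereas the paper's assertion that $I\cap J=\{0\}$ alone forces simplicity is not literally correct --- for instance, in the two-dimensional algebra with null product and even non-degenerate form with orthonormal homogeneous basis $\{x,y\}$, the ideal $I=\K x$ is minimal and non-degenerate yet not simple; it belongs to the third alternative ($x\in Ann(A)$), which the paper's first branch would misclassify. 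Conversely, the paper's route reaches the null-product cases marginally faster, since total isotropy plus Lemma \ref{Iideal}(i) hands it $I^{2}=\{0\}$ without the separate contradiction argument you run for a totally isotropic $I$ satisfying $I^{2}=I$. In short: same toolbox, but your organization of the trichotomy quietly repairs a small gap in the published proof while establishing the identical statement.
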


\begin{proof}
Let $I$ be a minimal graded two-sided ideal of $A$ and $J$ its orthogonal with respect to $B$. As $I\cap J$ is a graded two-sided ideal of $A$ which is included in $I$, then  we have $I\cap J=\left\{0\right\}$ or $I\cap J= I$. First, we suppose that $I\cap J=\left\{0\right\}$, then $A=I\oplus J$ and it follows that any graded two-sided ideal of $I$ is a graded two-sided ideal of $A$. So, we deduce that $I$ is simple. Next, if  $I\cap J=I$, then by Lemma \ref{Iideal}, $I$ is with null product. On the other hand, $AI$ and $IA$ are two graded two-sided ideals of $A$ which are included in $I$. Consequently, by the minimality of $I$, we have  $AI\in \left\{\left\{0\right\},I\right\}$ and $IA\in \left\{\left\{0\right\},I\right\}$. We can deduce by a simple computation where we use the associativity and the non-degeneracy of $B$, that we have uniquely $AI=I=IA$. It follows that $I$ is a graded two-sided ideal with null product such that $AI=I=IA$ or $I=\K x$, where  $x$ is a homogeneous element of $ Ann(A)$.\\
\end{proof}

\begin{remark}\label{minimalirr} 
If $(A,B)$ is a non-simple associative superalgebra with $B$-irreducible homogeneous symmetric structure such that $dim A>1$, then any minimal graded two-sided ideal of $A$ is totally isotropic and so it is not simple.\\  
\end{remark}
  
\begin{lemma}\label{quotient}
Let $(A,B)$ be a non-simple associative superalgebra with $B$-irreducible homogeneous symmetric structure such that $dim A>1$. Let $I$ be a minimal graded two-sided ideal of $A$ and $J$ its orthogonal with respect to $B$. Then:  
\begin{enumerate}
\item[(i)] $A/J$ is a simple associative superalgebra or a one-dimensional superalgebra with null product,
\item[(ii)] $A/J$ is a one-dimensional superalgebra with null product if and only if $I=\K x$, where $x$ is a homogeneous element of $Ann(A)$.\\
\end{enumerate}
\end{lemma}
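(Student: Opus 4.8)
The plan is to exploit the structure already developed: $I$ is a minimal graded two-sided ideal of the non-simple $B$-irreducible superalgebra $A$, and by Remark~\ref{minimalirr} it is totally isotropic and not simple, so by Lemma~\ref{minimal} it is either a null-product ideal with $AI=I=IA$, or $I=\K x$ with $x$ a homogeneous element of $Ann(A)$. I would analyze $A/J$ by relating its two-sided ideals to those of $A$ that contain $J$, and by using the non-degeneracy pairing between $I$ and $A/J$ coming from the orthogonality $J=I^{\perp}$. Concretely, since $B$ is non-degenerate and $J=I^{\perp}$, the form $B$ induces a non-degenerate pairing $(A/J)\times I\to\K$, so $\dim(A/J)=\dim I$ and any graded two-sided ideal of $A/J$ corresponds, via this pairing, to a graded two-sided ideal of $A$ sandwiched between $J$ and $A$, whose ``dual'' sits inside $I$; the minimality of $I$ then forces these to be trivial.

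First I would establish that the graded two-sided ideals of $A/J$ are in bijection with the graded two-sided ideals of $A$ containing $J$, which is routine since $J$ is a graded two-sided ideal by Lemma~\ref{Iideal}(i). Next, for such an ideal $\bar{K}=K/J$ with $J\subseteq K\subseteq A$, I would consider $K^{\perp}\subseteq J^{\perp}=I$ (using $J=I^{\perp}$ and non-degeneracy so that $I^{\perp\perp}=I$); since $K^{\perp}$ is a graded two-sided ideal of $A$ contained in $I$, minimality of $I$ gives $K^{\perp}\in\{\{0\},I\}$, hence $K\in\{A,J\}$ and $\bar K\in\{\{0\},A/J\}$. This shows $A/J$ has no nontrivial proper graded two-sided ideal. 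To finish part~(i) I would separate cases: either the product on $A/J$ is non-null, in which case $A/J$ is simple by definition; or the product on $A/J$ is null, in which case the absence of nontrivial ideals forces $\dim(A/J)=1$ (any graded subspace of a null-product superalgebra is an ideal), giving the one-dimensional null-product alternative.

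For part~(ii), I would use the dimension equality $\dim(A/J)=\dim I$ from the induced pairing. If $A/J$ is one-dimensional with null product, then $\dim I=1$; being totally isotropic with null product and one-dimensional, Lemma~\ref{minimal} leaves only the possibility $I=\K x$ with $x$ homogeneous in $Ann(A)$, since the alternative $AI=I=IA$ would force $AI\neq\{0\}$, incompatible with $I$ being spanned by an annihilator element once one checks $AI=\{0\}$. Conversely, if $I=\K x$ with $x\in Ann(A)$ homogeneous, then $\dim I=1$ so $\dim(A/J)=1$, and the product on $A/J$ must be null because a one-dimensional associative superalgebra with non-null product would be spanned by an even idempotent, making $A/J$ simple and hence forcing $I$ to pair with a simple quotient rather than an annihilator line; I would verify directly that $x\in Ann(A)$ makes the induced product on $A/J$ vanish.

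The main obstacle I anticipate is the bookkeeping in part~(ii): carefully transferring the null-product and annihilator conditions across the induced pairing $A/J\cong I^{*}$ as $A_{\bar 0}$-bimodules, and ruling out the $AI=I=IA$ branch of Lemma~\ref{minimal} in the one-dimensional case. This requires tracking the bimodule action on both sides of the pairing and using associativity of $B$ to translate ``$x$ annihilates $A$'' into ``the quotient product is zero,'' which is delicate because the parity of $I$ (and hence whether the symmetric structure is even or odd) affects the identifications, so I would keep the homogeneity of $x$ explicit throughout.
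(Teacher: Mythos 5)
Part (i) of your proposal and the ``if'' direction of (ii) are correct, and (i) is in substance the paper's own argument: the paper simply asserts that $J$ is a maximal two-sided ideal, which is precisely your orthogonality duality $K \mapsto K^{\perp}$ between graded ideals containing $J$ and graded ideals contained in $I$, followed by the same dichotomy (non-null product gives simplicity, null product forces dimension one). Your backward direction of (ii) also matches the paper once the ``direct verification'' is written out: with $A = J \oplus \K y$ and $B(x,y)\neq 0$, associativity gives $B(x,y^{2}) = B(x.y,y) = 0$ because $x \in Ann(A)$, hence $y^{2} \in I^{\perp} = J$ and the induced product on $A/J$ vanishes.

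The forward direction of (ii) is where you have a genuine gap, and as written the argument is circular. You dismiss the branch $AI = I = IA$ of Lemma \ref{minimal} because it is ``incompatible with $I$ being spanned by an annihilator element once one checks $AI = \{0\}$'' --- but that $x$ spans an annihilator line, i.e. that $AI = IA = \{0\}$, is exactly the conclusion to be proven, and you never indicate how that check is performed. It cannot follow from $\dim I = 1$, minimality and total isotropy alone: for $A = \K[t]/(t^{2})$ with the even symmetric structure $B(a+bt,a'+b't) = ab' + a'b$, the ideal $I = \K t$ is minimal, one-dimensional, totally isotropic and with null product, yet $AI = I \neq \{0\}$ (there $A/J \cong \K$ has non-null product, so no contradiction with the lemma). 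Thus the null-product hypothesis on $A/J$ must enter the argument, and it does so through the computation you omit: choose a homogeneous $y$ with $A = J \oplus \K y$ and $B(x,y) \neq 0$; since $A/J$ has null product, $y^{2} \in J$; then by associativity $B(x.y,J) = B(x,y.J) \subseteq B(I,J) = \{0\}$ and $B(x.y,y) = B(x,y^{2}) \in B(I,J) = \{0\}$, so $x.y = 0$ by non-degeneracy of $B$, and likewise $y.x = 0$; combined with $x.J = J.x = \{0\}$ from Lemma \ref{Iideal}(i), this yields $x \in Ann(A)$ and rules out the branch $AI = I$. This is the step your proposal is missing.
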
 

\begin{proof}
$(i)$ As $I$ is a minimal graded two-sided ideal of $A$, then its orthogonal $J$ is a maximal two-sided ideal of $A$ and consequently the set of all ideal of $A/J$ is composed by $\left\{\left\{0\right\}, A/J\right\}$. So, we have $A/J$ is a simple associative superalgebra or a one-dimensional superalgebra with null product. $(ii)$ We suppose that $A/J$ is the one-dimensional superalgebra with null product. Then, $dim I=1$ and which implies by hypothesis that $I$ is totally isotropic. So $I=\K x$ where $x$ is a homogeneous element of $J$. As $I\subseteq J$ and $B$ is non-degenerate, there exist a homogeneous element $y$ in $A$ such that $A= J\oplus \K y$ with $B(x,y)\neq 0$ and $y^2\in J$ since $A/J$ is with null product. Now, we prove that $x\in Ann(A)$. It is clear, by Lemma \ref{Iideal}, that $x.J= \left\{0\right\}= J.x$. On the over hand, $B(x.y,A)=B(x.y, J\oplus \K y)= \left\{0\right\}$. Hence $x.y=y.x=0$ and consequently $x\in Ann(A)$. Conversely, let us consider that $I=\K x$, where $x\in Ann(A)$, and $J$ its orthogonal with respect to $B$. According to Remark \ref{minimalirr}, we have $I$ is totally isotropic. So since $B$ is non-degenerate, there exist a homogeneous element $y$ in $A$ such that $A=J\oplus \K y$ and $B(x,y)\neq 0$. Proving that $y^2\in J$. Supposing that  $y^2= j+ky$, where $j\in J$ et $k\in \K$. Then, $0=B(x.y,y)= B(x,y^2)= k B(x,y)$ and consequently, we obtain that $k=0$. So $y^2\in J$ and we deduce that $A/J$ is a one-dimensional superalgebra with null product.\\ 
\end{proof}

\section{ Simple associative superalgebras with homogeneous symmetric structures}

Following Proposition \ref{incompatible}, simple associative superalgebras don't admit simultaneously even-symmetric and odd-symmetric structures. In \cite{Al}, We find the list of all simple associative superalgebras over an algebraically closed field $\K$ up isomorphism. In this section, we will prove that all simple associative superalgebras admit homogeneous symmetric structures. We give explicitly simple associatives superalgebras which admit even-symmetric structures and those that admit odd-symmetric structures. The list of all simple associative superalgebras given in \cite{Al} is as follows:\\ 

 (a) $A= M_{n}(\K)$, The algebra of square matrix of order $n$ over $\K$ such that: 

$A_{\bar{0}}= \left\{\left(
\begin{array}{cc}
a & 0\\
0 & b
\end{array}
\right): a\in M_{r}(\mathbb K), b\in M_{s}(\K) \right\}$, $A_{\bar{1}}= \left\{ \left(
\begin{array}{cc}
0 & c\\
d & 0
\end{array}
\right): c\in M_{r\times s}(\mathbb K), d\in M_{s\times r}(\K) \right\}$, where $r\geq 1$, $s\geq 0$ and $r+s=n$. This superalgebra is denoted by $M_{r,s}(\K)$. \\

(b) The sub-superalgebra $A= A_{\bar{0}}\oplus A_{\bar{1}}$ of $M_n(\mathbb K)$ such that:

$A_{\bar{0}}= \left\{ \tilde{a}:=\left(
\begin{array}{cc}
a & 0\\
0 & a
\end{array}
\right): a\in M_{n}(\mathbb K)    \right\}$, $A_{\bar{1}}= \left\{ \bar{b}:=\left(
\begin{array}{cc}
0 & b\\
b & 0
\end{array}
\right): b\in M_{n}(\mathbb K)   \right\}$. 

We denote this superalgebra by $Q_{n}(\mathbb K).$\\

In Proposition \ref{simplepaire} (resp. Proposition \ref{simpleimpaire}) below, we prove that $M_{r,s}(\K)$, where $r\geq 1$ and $s\geq 0$, are the only even-symmetric simple associative superalgebras (resp. $Q_{n}(\K)$, where $n\in \N^*$ are the only odd-symmetric simple associative superalgebras).\\

\begin{proposition}\label{simplepaire}
$M_{r,s}(\K)$, where $r\geq 1$ and $s\geq 0$, are the unique even-symmetric simple associative  superalgebras up isomorphism. In particular, the field $\K$ is the unique even-symmetric simple associative super-commutative superalgebra up isomorphism.\\ 
\end{proposition}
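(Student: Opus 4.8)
The plan is to prove the statement in two directions, using the characterization of even-symmetric associative superalgebras given in Proposition \ref{caracterisationpaire}. For the forward direction, I would take an arbitrary simple associative superalgebra admitting an even-symmetric structure and show it must be isomorphic to some $M_{r,s}(\K)$; since the classification in \cite{Al} lists exactly $M_{r,s}(\K)$ and $Q_n(\K)$ as the simple associative superalgebras, this reduces to showing that $Q_n(\K)$ does \emph{not} admit an even-symmetric structure (this will be done in the companion Proposition \ref{simpleimpaire}, so by the incompatibility Proposition \ref{incompatible} it suffices to exhibit an odd-symmetric structure on $Q_n(\K)$). For the converse direction, which is the constructive heart of the proof, I would explicitly write down an even-symmetric structure on each $M_{r,s}(\K)$.

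The natural candidate form on $M_{r,s}(\K)$ is $B(x,y) := \mathrm{str}(x\cdot y)$ up to a supertrace-type normalization, or more directly a trace form. Concretely, for $x,y \in M_{r,s}(\K)$ I would set $B(x,y) := \mathrm{tr}(xy)$ where $\mathrm{tr}$ is the ordinary matrix trace on $M_n(\K)$ with $n=r+s$. First I would check that this $B$ is even: the trace of a product of two even (block-diagonal) matrices and two odd (block-antidiagonal) matrices is generally nonzero, while $\mathrm{tr}(x_{\bar 0} y_{\bar 1})=0$ because the product of a block-diagonal and a block-antidiagonal matrix is block-antidiagonal and hence traceless, so $B(A_{\bar 0},A_{\bar 1})=\{0\}$, confirming evenness. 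Associativity $B(xy,z)=B(x,yz)$ is immediate from the cyclicity $\mathrm{tr}(xyz)=\mathrm{tr}(x(yz))$. Non-degeneracy follows from non-degeneracy of the trace form on the full matrix algebra $M_n(\K)$ restricted to each graded block. The only genuinely delicate point is \textbf{supersymmetry}: I must verify $B(x,y)=(-1)^{|x||y|}B(y,x)$, which on the even part reduces to the symmetry $\mathrm{tr}(xy)=\mathrm{tr}(yx)$, but on the odd part requires $\mathrm{tr}(xy)=-\mathrm{tr}(yx)$, i.e.\ antisymmetry. Since ordinary trace is always symmetric, $B=\mathrm{tr}(xy)$ will \emph{not} be supersymmetric on $A_{\bar 1}$; this forces the use of the supertrace $\mathrm{str}\begin{pmatrix} a & c \\ d & b\end{pmatrix} := \mathrm{tr}(a)-\mathrm{tr}(b)$ and the associated bilinear form $B(x,y):=\mathrm{str}(xy)$.

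I anticipate that the main obstacle is precisely reconciling supersymmetry with associativity and evenness simultaneously, so I would verify carefully that $B(x,y):=\mathrm{str}(xy)$ does the job. Writing $x=\begin{pmatrix} a_1 & c_1 \\ d_1 & b_1\end{pmatrix}$ and $y=\begin{pmatrix} a_2 & c_2 \\ d_2 & b_2\end{pmatrix}$, a direct block computation gives $\mathrm{str}(xy)=\mathrm{tr}(a_1a_2+c_1d_2)-\mathrm{tr}(d_1c_2+b_1b_2)$; comparing with $\mathrm{str}(yx)$ and using $\mathrm{tr}(c_1d_2)=\mathrm{tr}(d_2c_1)$ should yield the correct sign $(-1)^{|x||y|}$ on each homogeneous piece, the crucial cancellation being that the off-diagonal contributions $\mathrm{tr}(c_1d_2)$ and $\mathrm{tr}(d_1c_2)$ change relative sign under the supertrace. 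Associativity and evenness carry over verbatim from the trace computation since the supertrace is still a linear functional satisfying $\mathrm{str}(xy)=(-1)^{|x||y|}\mathrm{str}(yx)$ and vanishing across the two gradings in the even case. Finally, non-degeneracy follows because $\mathrm{str}$ is a nonzero trace form and the grading blocks pair non-degenerately with themselves. Once $\mathrm{str}(xy)$ is verified to be an even-symmetric structure on each $M_{r,s}(\K)$, the proposition follows, and the super-commutative special case reduces to $r=1$, $s=0$, giving $\K$ itself with $B(\lambda,\mu)=\lambda\mu$.
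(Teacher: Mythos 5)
Your proposal is correct, and its constructive half coincides with the paper's: the form the paper writes blockwise on $M_{r,s}(\K)$, namely $B(\tilde{M},\tilde{M}')=tr(aa')-tr(bb')$ on the even part, $B(\bar{N},\bar{N}')=tr(cd')-tr(dc')$ on the odd part, and $B(A_{\bar{0}},A_{\bar{1}})=\{0\}$, is exactly your supertrace form $\mathrm{str}(xy)$, and your block computation $\mathrm{str}(xy)=tr(a_1a_2+c_1d_2)-tr(d_1c_2+b_1b_2)$ reproduces it; your checks of evenness, supersymmetry and associativity match the paper's, and your brief remark that the graded blocks pair non-degenerately is the compressed version of the paper's explicit verification (testing $tr(cd)=0$ against the matrix units $e_{i_0j_0}$ to force $c=0$). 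Where you genuinely diverge is the exclusion of $Q_n(\K)$: the paper argues directly, showing for a putative even-symmetric $B$ that $B(E_{11},E_{11})=B(E_{11}F_{11},F_{11})=B(F_{11},F_{11})=0$ (skew-symmetry of $B$ on the odd part) and then $B(E_{11},E_{1m})=B(E_{11},E_{mk})=0$, so that $E_{11}$ lies in the radical of $B$, contradicting non-degeneracy; you instead exhibit the odd-symmetric form $B(\tilde{a},\bar{b})=tr(ab)$ on $Q_n(\K)$ and invoke the incompatibility Proposition \ref{incompatible}. Your route is legitimate and non-circular: $Q_n(\K)$ is unital, hence has non-null product, and the odd-symmetric structure on $Q_n(\K)$ constructed in Proposition \ref{simpleimpaire} does not depend on the present proposition. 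What your route buys is brevity and reuse of the structural result of Subsection \ref{incompatibility}; what the paper's route buys is a self-contained, purely computational obstruction that does not lean on Proposition \ref{incompatible} or on the companion proposition, which also makes the logical order of the two simplicity propositions immaterial.
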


\begin{proof}
Let $A$ be a simple associative superalgebra, then by \cite{Al},  $A\in \left\{ M_{r,s}(\K), Q_{n}(\K)\right\}$.\\

\textbf{First case}: Let $A= Q_{n}(\K)$, where $n\in \N^*$. If $n=1$, since $dim (Q_1(\K))=1$, then $Q_1(\K)$ doesn't admits any even-symmetric structure. Suppose that $n>1$ and consider the following  basis of $Q_{n}(\K)$ \\
$\left\{E_{ij}:= \left(
\begin{array}{cc}
e_{ij} & 0\\
0 & e_{ij}
\end{array}
\right),\, F_{ij}:= \left(
\begin{array}{cc}
0 & e_{ij}\\
e_{ij} & 0
\end{array}
\right),\ \mbox{where}\, {(e_{ij})}_{\begin{tabular}{ll}$1\leq i\leq n$,&\\ $1\leq j\leq n$\end{tabular}}\mbox{is the usual basis of}\  M_n(\K) \right\}$. It is clear that
$E_{ij}E_{lk}= {\delta}_{jl}E_{ik}, \ \ F_{ij}F_{lk}= {\delta}_{jl}E_{ik},\ \ E_{ij}F_{lk}= {\delta}_{jl}F_{ik},\ \ F_{ij}E_{lk}= {\delta}_{jl}F_{ik}$.
We suppose that $A$ admits an even-symmetric structure $B$. Then by the associativity of $B$ we have:
\begin{eqnarray*}
B(E_{11},E_{11})&=& B(E_{11}, F_{11}F_{11})=B(E_{11}F_{11},F_{11})= B(F_{11},F_{11})=0,
\end{eqnarray*}
because ${B\vert}_{A_{\bar{1}}\times A_{\bar{1}}}$ is skew symmetric. Now, let $m\in \left\lbrace  2,\cdots,n\right\rbrace $ and $k\in \left\lbrace  1,\cdots,n\right\rbrace$, then 
\begin{eqnarray*}
B(E_{11}, E_{1m})&=&B(E_{11}, E_{11}E_{1m}) =B(E_{11},E_{1m}E_{11})=0.\\
B(E_{11},E_{mk})&=&B(E_{11},E_{m1}E_{1k})=B(E_{11}E_{m1},E_{1k})=0.
\end{eqnarray*}
 So, we obtain that, $E_{11}$ is in the orthogonal of $A$ with respect of $B$ and which contradict the fact that $B$ is even and non-degenerate. We conclude that $Q_n(\K)$ does not admits even-symmetric structure.\\

\textbf{Second case}: We suppose that $A=M_{r,s}(\K)$, where $r\in \N^*$ and $s\in \N$. We consider the following  bilinear form $B$ on $A$ defined by:
\begin{eqnarray*} 
B(\tilde{M},\tilde{M'})&=&tr(aa')-tr(bb'),\ \forall\, \tilde{M}=\left(
\begin{array}{cc}
a & 0\\
0 & b
\end{array}
\right), \,\tilde{M'}=\left(
\begin{array}{cc}
a' & 0\\
0 & b'
\end{array}
\right)\in A_{\bar{0}},\\
B(\bar{N},\bar{N'})&=&tr(cd')-tr(dc'),\ \forall\, \bar{N}=\left(
\begin{array}{cc}
0 & c\\
d & 0
\end{array}
\right), \,\bar{N'}=\left(
\begin{array}{cc}
0 & c'\\
d' & 0
\end{array}
\right)\in A_{\bar{1}},\\
B(A_{\bar{0}},A_{\bar{1}})&=&\left\{0\right\}. 
\end{eqnarray*}
It is clear that $B$ is even super-symmetric such that ${B\mid}_{A_{\bar{0}}\times A_{\bar{0}}}$ is non-degenerate. To show that $B$ is non-degenerate on $A$, it remains to verify that ${B\mid}_{A_{\bar{1}}\times A_{\bar{1}}}$ is non-degenerate. Let $c={(c_{mn})}_{1\leq m\leq r,\\ 1\leq n\leq s}$ such that  $tr(cd)=0$, $\forall d\in M_{s\times r}(\mathbb K)$. Let $i_0\in \left\lbrace 1,\cdots, s\right\rbrace $ and $j_0\in \left\lbrace 1,\cdots,r\right\rbrace $ and consider  $d=e_{i_0j_0}$, where ${(e_{ij})}_{1\leq i\leq s,\\1\leq j\leq r}$ is the canonical basis of $M_{s\times r}(\K)$. Then $c e_{i_0j_0}= {(a_{pq})}_{1\leq p\leq r,\\1\leq q\leq r}$ such that $a_{pp}=0$ if $p\neq j_0$ and $a_{j_0j_0}= c_{j_0i_0}$. Hence, the fact that $tr(ce_{i_0j_0})=0$ implies that $c_{j_0i_0}=0$, $\forall\, 1\leq j_0\leq r,\, 1\leq i_0\leq s$. Consequently $c=0$ and we deduce that ${B\vert}_{A_{\bar{1}}\times A_{\bar{1}}}$ is a non-degenerate bilinear form. In addition, for all $a\in M_{r}(\K)$, $b\in M_{s}(\K)$, $c,c'\in M_{r\times s}(\K)$ and $d,d'\in M_{s\times r}(\K)$, we have
$$B(\left(
\begin{array}{cc}
a & 0\\
0 & b
\end{array}
\right)\left(
\begin{array}{cc}
0 & c\\
d & 0
\end{array}
\right), \left(
\begin{array}{cc}
0 & c'\\
d' & 0
\end{array}
\right))= tr(acd')-tr(bdc')=B(\left(
\begin{array}{cc}
a & 0\\
0 & b
\end{array}
\right), \left(
\begin{array}{cc}
0 & c\\
d & 0
\end{array}
\right)\left(
\begin{array}{cc}
0 & c'\\
d' & 0
\end{array}
\right)) $$
 So, we conclude that $(A,B)$ is an even-symmetric simple associative superalgebra.\\  

\end{proof}

\begin{proposition}\label{simpleimpaire}
$Q_{n}(\K)$, where $n\in \N^*$, are the unique odd-symmetric simple associative  superalgebras up isomorphism. In particular, $Q_{1}(\K)$ is the unique odd-symmetric simple associative super-commutative superalgebra up isomorphism.\\
\end{proposition}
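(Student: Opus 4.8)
The plan is to follow the same dichotomy used in the proof of Proposition~\ref{simplepaire}, but with the roles of the two families of simple superalgebras exchanged. By \cite{Al} every simple associative superalgebra is isomorphic to some $M_{r,s}(\K)$ or to some $Q_n(\K)$, so it suffices to show that no $M_{r,s}(\K)$ admits an odd-symmetric structure, while every $Q_n(\K)$ does; together with the classification this yields the uniqueness statement.

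First I would dispose of the family $M_{r,s}(\K)$ with no computation at all. By Proposition~\ref{simplepaire} each $M_{r,s}(\K)$ carries an even-symmetric structure, and being simple it has non-null product. Proposition~\ref{incompatible} then forbids it from carrying an odd-symmetric structure as well, so $M_{r,s}(\K)$ admits no odd-symmetric structure. (One could instead argue directly, as in the first case of Proposition~\ref{simplepaire}; for instance when $r\neq s$ the two parities already have unequal dimensions, $\dim A_{\bar{0}}=r^2+s^2\neq 2rs=\dim A_{\bar{1}}$, so no non-degenerate odd pairing between them can exist. Invoking Proposition~\ref{incompatible}, however, settles all cases at once and is shorter.)

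It remains to produce an odd-symmetric structure on each $Q_n(\K)$. Writing a generic even element as $\tilde{a}$ and a generic odd element as $\bar{b}$, with $a,b\in M_n(\K)$, I would set
\[
B(\tilde{a},\bar{b})=B(\bar{b},\tilde{a})=\mathrm{tr}(ab),\qquad B(A_{\bar{0}},A_{\bar{0}})=B(A_{\bar{1}},A_{\bar{1}})=\left\{0\right\}.
\]
By construction $B$ is odd, and it is supersymmetric because $\mathrm{tr}(ab)=\mathrm{tr}(ba)$; non-degeneracy of $B$ is immediate from the non-degeneracy of the trace form on $M_n(\K)$. The only point needing care is associativity. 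Recalling the multiplication rules $\tilde{a}\,\tilde{a}'=\widetilde{aa'}$, $\bar{b}\,\bar{b}'=\widetilde{bb'}$, $\tilde{a}\,\bar{b}=\overline{ab}$ and $\bar{b}\,\tilde{a}=\overline{ba}$, one checks $B(x\cdot y,z)=B(x,y\cdot z)$ on all eight homogeneous parity patterns: the four patterns in which both sides pair elements of equal parity vanish identically, and the remaining four collapse to the cyclic invariance $\mathrm{tr}(XYZ)=\mathrm{tr}(YZX)$ of the trace. Hence $(Q_n(\K),B)$ is odd-symmetric, and combined with the first step this identifies the $Q_n(\K)$ as exactly the odd-symmetric simple associative superalgebras up to isomorphism.

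For the final assertion I would note that $Q_n(\K)$ is commutative precisely when $n=1$, since its even part is isomorphic to $M_n(\K)$, which is commutative if and only if $n=1$, and $Q_1(\K)$ is directly checked to be super-commutative. As the $M_{r,s}(\K)$ are even-symmetric rather than odd-symmetric, $Q_1(\K)$ is the unique odd-symmetric simple associative super-commutative superalgebra up to isomorphism. The main obstacle throughout is merely the bookkeeping in the associativity verification; the conceptual weight is carried by the incompatibility result of Proposition~\ref{incompatible}.
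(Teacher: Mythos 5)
Your proof is correct, and the existence half for $Q_n(\K)$ is exactly the paper's construction (the odd trace form $B(\tilde{a},\bar{b})=\mathrm{tr}(ab)$, with the same reduction of associativity to cyclicity of the trace). The non-existence half, however, takes a genuinely different route. The paper argues directly inside the proposition: an odd non-degenerate form forces $\dim A_{\bar{0}}=\dim A_{\bar{1}}$, hence $r=s=:n$, and then an associativity computation shows that the even idempotent $\left(\begin{smallmatrix} I_{n} & 0\\ 0 & 0 \end{smallmatrix}\right)$ is orthogonal to all of $M_{n,n}(\K)$, contradicting non-degeneracy. You instead combine Proposition \ref{simplepaire} (each $M_{r,s}(\K)$ carries an even-symmetric structure) with Proposition \ref{incompatible} (no superalgebra with non-null product carries both structures); since $M_{r,s}(\K)$ is unital, its product is non-null, and the conclusion is immediate. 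This is shorter and makes explicit a link that the paper only gestures at in the opening of Section 3 but does not exploit in the proof itself. What the paper's computation buys in exchange is logical independence from Proposition \ref{simplepaire} and a concrete obstruction (an explicit radical element for any candidate odd form); note also that your parenthetical dimension count only disposes of $r\neq s$, so the case $r=s$ genuinely requires either the incompatibility argument or the paper's computation. One caveat on the final assertion, which the paper's own proof silently skips: ``super-commutative'' here must be read as commutativity of the underlying algebra, since under the graded sign convention $Q_1(\K)$ fails to be supercommutative (its odd generator satisfies $\bar{1}\,\bar{1}=\tilde{1}\neq 0$, whereas the sign rule would force square zero); with that reading your argument for uniqueness of $Q_1(\K)$ is fine.
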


\begin{proof}
Let $A$ be a simple associative superalgebra, then we have  $A\in \left\{ M_{r,s}(\K), Q_{n}(\K)\right\}$.\\

\textbf{First case}: Let $A= M_{r,s}(\K)$, where $r\geq 1$ and $s\geq 0$. If we suppose that $B$ is an odd-symmetric structure on $A$, then $dim A_{\bar{0}}= dim A_{\bar{1}}$ and so we have $r=s$. Set $n=r=s$ and let 
$\left(
\begin{array}{cc}
0 & c\\
d & 0
\end{array}
\right)\in A_{\bar{1}}$, where $c,d\in M_{n}(\K)$, then by the associativity of $B$, we have: 
\begin{eqnarray}
B(\left(
\begin{array}{cc}
0 & c\\
d & 0
\end{array}
\right), \left(
\begin{array}{cc}
I_{n} & 0\\
0 & 0
\end{array}
\right))&=& 
B(\left(
\begin{array}{cc}
0 & c\\
0 & 0
\end{array}
\right)\left(
\begin{array}{cc}
0 & 0\\
0 & I_{n}
\end{array}
\right),\left(
\begin{array}{cc}
I_{n} & 0\\
0 & 0
\end{array}
\right))\nn \\  &+& 
 B(\left(
\begin{array}{cc}
0 & 0\\
0 & I_{n}
\end{array}
\right)\left(
\begin{array}{cc}
0 & 0\\
d & 0
\end{array}
\right), 
\left(
\begin{array}{cc}
I_{n} & 0\\
0 & 0
\end{array}
\right))\nn \\ &=& 0.\nn
\end{eqnarray} Consequently, $B(\left(\begin{array}{cc}
I_{n} & 0\\
0 & 0
\end{array}
\right), A_{\bar{1}} )=\left\{0\right\}$.
 On the other hand, as $B$ is odd, we obtain that   $B(\left(\begin{array}{cc}
I_{n} & 0\\
0 & 0
\end{array}
\right), A_{\bar{0}})=\left\{0\right\}$. So $\left(
\begin{array}{cc}
I_{n} & 0\\
0 & 0
\end{array}
\right) $ is in the orthogonal of $M_{n,n}(\K)$ with respect of $B$. Consequently, we conclude that $M_{n,n}(\K)$ are not odd-symmetric associative superalgebras.\\

\textbf{Second case}: In this case, we consider $A= Q_{n}(\K)$, where $n\in \N^*$, and $B$ is a bilinear form on $A$ defined by:  $B(\tilde{a},\bar{b}):= tr(ab)$ and  $B$ is null elsewhere. It is clear that $B$ is odd, super-symmetric and non-degenerate bilinear form. In addition, for $a,a',b,b',b''\in M_{n}(\K)$, we have: 
$$B(\tilde{a}\tilde{a'}, \bar{b})= B(\widetilde{aa'}, \bar{b})= tr((aa')b)= tr(a(a'b))= B(\tilde{a}, \overline{a'b})= B(\tilde{a}, \tilde{a'}\bar{b}),$$
$$B(\bar{b}\bar{b'},\bar{b''})= B(\widetilde{bb'},\bar{b''})= tr((bb')b'')= tr(b(b'b''))= B(\bar{b},\widetilde{b'b''})= B(\bar{b},\bar{b'}\bar{b''}).$$
Hence $(Q_{n}(\K),B)$ is odd-symmetric simple associative superalgebra.\\

\end{proof}

\begin{remark}
 Let $(A,B)$ be an even-symmetric (resp. odd-symmetric) simple associative superalgebra. If $B'$ is an other even-symmetric structure (resp. odd-symmetric structure) on $A$, then there exist $\lambda\in \K$ such that $B'=\lambda B$. Indeed, as $B$ and $B'$ are two bilinear forms non-degenerate of same degree, then there exist an even automorphism 
 $\varphi$ of $A$ such that $B'(x,y)=B(\varphi(x),y)$, $\forall x,y\in A$. Using the associativity of $B$ and those of $B'$, we obtain for all $x,y,z\in A$ 
 \begin{eqnarray*}
B(\varphi(x).y,z)&=&B(\varphi(x),y.z)= B'(x,y.z)=B'(x.y,z)=B(\varphi(x.y),z)\\ 
B(x.\varphi(y),z)&=& {(-1)}^{\mid z\mid(\mid x\mid + \mid y\mid)}B(z.x,\varphi(y)) = {(-1)}^{\mid x\mid(\mid y\mid + \mid z\mid)}B(\varphi(y), z.x)\\
 &=& {(-1)}^{\mid x\mid(\mid y\mid + \mid z\mid)} B'(y, z.x)= {(-1)}^{\mid z\mid(\mid x\mid + \mid y\mid)}B'(z.x, y)= B'(x.y, z)= B(\varphi(x.y), z).    
\end{eqnarray*}

Now using the non-degeneracy of $B$, we obtain that $\varphi$ is a morphism of $A$-bimodules. Hence, we can show easily that $Ker(\varphi-\lambda id)$ is a non-null graded two-sided ideal of $A$. So, we conclude by the simplicity of $A$ that $A=Ker(\varphi-\lambda id)$ and consequently $\varphi=\lambda id$.\\  
 \end{remark}

\section{ Associative superalgebras with homogeneous symmetric structures whose even parts are semi-simple bimodules}

This section is composed essentially of two parts. The first one is dedicated to the study of odd-symmetric associative superalgebras whose even parts are semi-simple bimodules and the second one is reserved to the study of even-symmetric associative superalgebras whose even parts are semi-simple bimodules.

\subsection{Odd-symmetric associative superalgebras whose even parts are semi-simple bimodules} 
 
We begin with the following two examples which will be useful in the sequel.\\

 \textbf{Example 1:} 
Let $(A,.)$ be an associative algebra. We consider the $\Z_{2}$-graded vector space $P(A^*)$ such that  $(P(A^*))_{\bar{0}}=\left\{0\right\}$ and $(P(A^*))_{\bar{1}}=A^*$, where $A^*$ is the dual space of $A$. In the $\Z_{2}$-graded vector space $A\oplus P(A^*)$, we define the product $\star$ and  the bilinear form $B$ respectively by:
 
\begin{eqnarray}
 (x+f)\star(y+h)&=&x.y + L^*(x)(h)+ R^*(y)(f),\ \forall x,y\in A, \ \ f,h\in P(A^*),\\
 B(f,x) &=& B(x,f)= f(x)\ \ \mbox{and}\ \  B(A,A)= \left\{0\right\}= B(P(A^*),P(A^*)).
 \end{eqnarray}
 $(A\oplus P(A^*),\star,B)$ is an odd-symmetric associative superalgebra that we call semi-direct product of $A$ by $P(A^*)$ by means of $(L^*,R^*)$, where $(L^*,R^*)$ is defined in subsection \ref{incompatibility}.\\

 \textbf{Example 2:}
 Let $A=A_{\bar{1}}$ be an associative superalgebra, $P(A^*)$ a $\Z_{2}$-graded vector space such that ${(P(A^*))}_{\bar{0}}= A^*, {(P(A^*))}_{\bar{1}}=\left\{0\right\}$ and $\gamma: A\times A \longrightarrow P(A^*)$ a bilinear map which satisfies $\gamma(x,y)(z)=\gamma(y,z)(x),\ \forall x,y,z\in A$. In the $\Z_{2}$-graded vector space  $P(A^*)\oplus A$, we define the following product $\star$ and the bilinear form $B$ respectively by: 
 \begin{eqnarray}
 (f+x)\star(g+y)&=& \gamma(x,y),\ \forall x,y\in A, \ \  f,h\in (P(A^*),\\
B(f,x)&=& B(x,f) = f(x)\ \ \mbox{and}\ \  B(A,A)= \left\{0\right\}= B(P(A^*),P(A^*)).
\end{eqnarray}  
 $(P(A^*)\oplus A,\star,B)$ is an odd-symmetric associative superalgebra that we call generalized semi-direct product of $P(A^*)$ by $A$ by means of $\gamma$.\\

In order to give an inductive description of odd-symmetric associative superalgebras whose even parts are semi-simple bimodules, we begin by studying the particular case of these superalgebras. More precisely, we start by giving an inductive description of odd-symmetric associative superalgebras whose even parts are semi-simple associative algebras.\\

\begin{lemma}\label{annnul}
Let $(A,B)$ be an odd-symmetric associative superalgebra such that $A_{\bar{0}}$ is a semi-simple associative algebra, then $Ann(A)=\left\{0\right\}$.\\
\end{lemma}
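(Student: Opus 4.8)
The plan is to exploit the one feature that separates a semi-simple associative algebra from a general one: the existence of a unit. First I would record that, over our field, a finite-dimensional semi-simple associative algebra is unital (by Wedderburn--Artin it is a direct sum of matrix algebras $M_{n_i}(\K)$, hence carries an identity); let $e\in A_{\bar{0}}$ denote this unit, so that $e\cdot a=a\cdot e=a$ for every $a\in A_{\bar{0}}$. I would also note that $Ann(A)$ is a graded two-sided ideal, so $Ann(A)=Ann(A)_{\bar{0}}\oplus Ann(A)_{\bar{1}}$, and it suffices to treat each homogeneous component separately.

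The even component would be immediate and would not even use $B$: if $a\in Ann(A)_{\bar{0}}$, then since $e\in A_{\bar{0}}\subseteq A$ and $a$ annihilates all of $A$, one gets $a=a\cdot e=0$. Hence $Ann(A)_{\bar{0}}=\{0\}$.

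For the odd component I would bring in the bilinear form. Let $x\in Ann(A)_{\bar{1}}$; by non-degeneracy of $B$ it is enough to prove $B(x,A)=\{0\}$. Since $B$ is odd, $B(x,A_{\bar{1}})=\{0\}$ automatically, so only the pairing against $A_{\bar{0}}$ survives. For $y\in A_{\bar{0}}$ I would write $y=e\cdot y$ and invoke associativity of $B$:
\[
B(x,y)=B(x,e\cdot y)=B(x\cdot e,y)=B(0,y)=0,
\]
the last step because $x\cdot e=0$ (as $x\in Ann(A)$ and $e\in A$). Thus $B(x,A)=\{0\}$ and $x=0$, giving $Ann(A)_{\bar{1}}=\{0\}$.

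Combining the two components yields $Ann(A)=\{0\}$. The main point is genuinely the first observation; once semisimplicity is converted into ``there is a unit $e\in A_{\bar{0}}$'', both verifications collapse to a single line each. So the obstacle here is conceptual rather than computational: one must notice that feeding the unit of $A_{\bar{0}}$ into the associativity identity for the odd form $B$ drives the odd part of the annihilator into the (trivial) radical of $B$, while the even part is killed by the unit directly.
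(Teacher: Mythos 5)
Your proof is correct and follows essentially the same route as the paper: split $Ann(A)$ into homogeneous parts, kill the even part by semisimplicity of $A_{\bar{0}}$, and kill the odd part by pairing against $A_{\bar{0}}$ via associativity and non-degeneracy of the odd form $B$. The only cosmetic difference is that you invoke the unit of $A_{\bar{0}}$ explicitly, whereas the paper uses the equivalent consequences of semisimplicity, namely $Ann(A_{\bar{0}})=\{0\}$ for the even part and (implicitly) $A_{\bar{0}}=A_{\bar{0}}\cdot A_{\bar{0}}$ in the step $B(x,A_{\bar{0}})=B(xA_{\bar{0}},A_{\bar{0}})$ for the odd part.
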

 
\begin{proof}
 It is clear that $Ann(A)\cap A_{\bar{0}}\subseteq Ann(A_{\bar{0}})$. Consequently $Ann(A)\cap A_{\bar{0}}=\left\{0\right\}$ because $A_{\bar{0}}$ is a semi-simple algebra. On the other hand, for $x\in Ann(A)\cap A_{\bar{1}}$ we have  $B(x,A)= B(x,A_{\bar{0}})= B(xA_{\bar{0}},A_{\bar{0}})=\left\{0\right\}$. So we obtain, by the non-degeneracy of $B$, that $x=0$ and we conclude that $Ann(A)= \left\{0\right\}$.\\ 
\end{proof} 

\begin{proposition}\label{min}
 Let $(A,B)$ be an odd-symmetric $B$-irreducible non-simple associative superalgebra such that $A_{\bar{0}}$ is a semi-simple associative algebra. Then $I$ is a minimal graded two-sided ideal of $A$ if and only if $I$ is a non-trivial irreducible $A_{\bar{0}}$-sub-bimodule of $A_{\bar{1}}$ such that $I. A_{\bar{0}}= I = A_{\bar{0}}.I$\\  
 \end{proposition}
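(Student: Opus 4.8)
The plan is to prove the two implications separately, treating the direct direction as a quick consequence of the structural lemmas already available and concentrating the real work on the converse.

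For the direct implication, suppose $I$ is a minimal graded two-sided ideal. Since $A$ is non-simple with non-null product, $\dim A>1$, so Remark \ref{minimalirr} gives that $I$ is totally isotropic, hence not simple. Lemma \ref{minimal} then leaves two cases: either $I$ has null product with $AI=I=IA$, or $I=\K x$ with $x\in Ann(A)$. The second is excluded by Lemma \ref{annnul}, which gives $Ann(A)=\{0\}$ (here semi-simplicity of $A_{\bar 0}$ enters). Next I would show $I\subseteq A_{\bar 1}$: writing $I=(I\cap A_{\bar 0})\oplus(I\cap A_{\bar 1})$, the even part $I\cap A_{\bar 0}$ is a two-sided ideal of the semi-simple algebra $A_{\bar 0}$, hence idempotent, while $(I\cap A_{\bar 0})^2\subseteq I^2=\{0\}$; thus $I\cap A_{\bar 0}=\{0\}$. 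Consequently $A_{\bar 1}I\subseteq A_{\bar 0}\cap I=\{0\}$, so $AI=A_{\bar 0}I$ and the relation $AI=I=IA$ becomes $A_{\bar 0}I=I=IA_{\bar 0}$; in particular $I$ is an $A_{\bar 0}$-sub-bimodule of $A_{\bar 1}$ with the two required multiplication properties (and non-trivial, since $A_{\bar 0}I=I\neq\{0\}$). Irreducibility follows from minimality: any nonzero sub-bimodule $M\subseteq I$ satisfies $AM=A_{\bar 0}M+A_{\bar 1}M\subseteq M+A_{\bar 1}I=M$, so $M$ is itself a graded two-sided ideal, whence $M=I$.

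For the converse, let $I$ be a non-trivial irreducible $A_{\bar 0}$-sub-bimodule of $A_{\bar 1}$ with $IA_{\bar 0}=I=A_{\bar 0}I$. The only thing missing for $I$ to be a graded two-sided ideal is $A_{\bar 1}I=\{0\}=IA_{\bar 1}$, and once this is known minimality is immediate, since a graded ideal contained in $I$ is in particular a sub-bimodule and hence equals $\{0\}$ or $I$. A useful first reduction, using the odd pairing, is that these two vanishings are equivalent: as $B(A_{\bar 0},A_{\bar 0})=\{0\}$ and $B$ is non-degenerate, $A_{\bar 1}I=\{0\}$ iff $B(A_{\bar 1}I,A_{\bar 1})=\{0\}$, and by associativity $B(xi,z)=B(x,iz)=\phi(x)(iz)$ for $x,z\in A_{\bar 1}$, $i\in I$; since $\phi$ of Proposition \ref{caracterisationimpaire} maps $A_{\bar 1}$ onto all of $A_{\bar 0}^{*}$, this vanishes for every $x$ exactly when $iz=0$, i.e. exactly when $IA_{\bar 1}=\{0\}$. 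So it suffices to prove $IA_{\bar 1}=\{0\}$.

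This last point is where I expect the main obstacle to lie, and I would resolve it by combining semi-simplicity with $B$-irreducibility. Using the bimodule isomorphism $\phi:A_{\bar 1}\to A_{\bar 0}^{*}$ and the Wedderburn decomposition $A_{\bar 0}=\bigoplus_k S_k$, the irreducible bimodule $I$ is supported on a single Wedderburn factor: there is a central idempotent $e$ of $A_{\bar 0}$, the unit of one factor $S:=S_{k_0}$, with $eI=I=Ie$ and the complementary idempotent annihilating $I$ on both sides. A short computation then gives $A_{\bar 1}I,\,IA_{\bar 1},\,I\cdot I\subseteq S$ and $A_{\bar 1}S=I=SA_{\bar 1}$. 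Now $IA_{\bar 1}$ is a two-sided ideal of $A_{\bar 0}$ contained in the simple algebra $S$, so $IA_{\bar 1}\in\{\{0\},S\}$. Suppose, for contradiction, $IA_{\bar 1}=S$. Then $J:=S\oplus I$ is a graded two-sided ideal of $A$, since all products among $A$, $S$, $I$ land in $S\oplus I$ by the inclusions above, and it is non-degenerate, because $B$ restricts to the perfect pairing $S\times I\to\K$ coming from the isomorphism $\phi|_I:I\to S^{*}$. By $B$-irreducibility $J=A$, so $A=S\oplus I$ with $A_{\bar 0}=S$ simple, $A_{\bar 1}=I$, and $A_{\bar 1}I\neq\{0\}$; inspecting the graded ideals of this $A$ shows it has none other than $\{0\}$ and $A$, i.e. $A$ is simple, contradicting the hypothesis. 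Hence $IA_{\bar 1}=\{0\}$, therefore $A_{\bar 1}I=\{0\}$, and $I$ is a minimal graded two-sided ideal. The two delicate points to write out with care are the localization of $I$ on a single factor $S$ and the verification that $S\oplus I$ is simple once $IA_{\bar 1}=S$.
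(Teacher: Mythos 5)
Your forward implication is correct and follows essentially the paper's own route: totally isotropic by Remark \ref{minimalirr}, null product, $I_{\bar{0}}=\left\{0\right\}$ from semi-simplicity of $A_{\bar{0}}$, then $A_{\bar{0}}I=I=IA_{\bar{0}}$ and irreducibility from minimality; your packaging through Lemma \ref{minimal} and Lemma \ref{annnul} versus the paper's direct computation with $B$ is an inessential variant. The converse is where you genuinely depart from the paper, and to your advantage: the paper's converse only checks that any graded two-sided ideal $J\subseteq I$ is a sub-bimodule, hence $\left\{0\right\}$ or $I$, and never verifies that $I$ is itself a two-sided ideal of $A$, i.e. that $A_{\bar{1}}I=IA_{\bar{1}}=\left\{0\right\}$ -- which, as you observe, is exactly the nontrivial point, since $A_{\bar{1}}I\subseteq A_{\bar{0}}$ can lie in $I$ only by vanishing. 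Your argument fills this gap and is correct: the equivalence $A_{\bar{1}}I=\left\{0\right\}\Leftrightarrow IA_{\bar{1}}=\left\{0\right\}$ via associativity of $B$ and surjectivity of the map $\phi$ of Proposition \ref{caracterisationimpaire} works; the localization of $I$ on one Wedderburn factor works, though when writing it out note that equality of the left and right supports is \emph{not} a property of abstract irreducible bimodules (they may live over different factors) -- it is forced by the isomorphism $A_{\bar{1}}\cong {A_{\bar{0}}}^*=\bigoplus_k {S_k}^*$, whose summands are pairwise non-isomorphic irreducibles each having both supports equal to $S_k$, so that $I$ must equal some $\phi^{-1}({S_{k_0}}^*)$; and the endgame is right: $S\oplus I$ is a graded two-sided ideal, non-degenerate because the odd pairing $S\times I\to\K$ is perfect, so $B$-irreducibility forces $A=S\oplus I$, and $IA_{\bar{1}}=S$ would make $A$ simple (a graded ideal whose even part is $S$ or whose odd part is $I$ is all of $A$), contradicting non-simplicity. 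In effect your converse proves, inside Proposition \ref{min}, the structural fact that the paper only obtains in the subsequent proposition ($A=S\oplus P(S^*)$ with ${A_{\bar{1}}}^2=\left\{0\right\}$); since that later proposition uses only the forward implication, your detour creates no circularity. What the paper's short converse buys is brevity at the cost of an actual gap; what yours buys is a converse that is genuinely proved, at the cost of the bimodule-theoretic work you rightly flag as the delicate points.
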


\begin{proof}
  Let $I$ be a minimal graded two-sided ideal of $A$, then according Remark \ref{minimalirr}, $I$ is totally isotropic. Consequently, by Lemma \ref{Iideal}, $I$ is with null product. As $A_{\bar{0}}$ is a semi-simple associative algebra, it follows that $I_{\bar{0}}=\left\{0\right\}$. So, we have  $I\subseteq A_{\bar{1}}$ and $I. A_{\bar{1}}= A_{\bar{1}}. I= \left\{0\right\}$. Using the fact that $I. A_{\bar{1}}= A_{\bar{1}}. I= \left\{0\right\}$, we can see easily that $I. A_{\bar{0}}$ and $A_{\bar{0}} .I$ are two two-sided ideals of $A$ which are included in $I$. Hence, by the minimality of $I$, we have $I.A_{\bar{0}}\in \left\{\left\{0\right\}, I\right\}$ and  $A_{\bar{0}}.I\in \left\{\left\{0\right\}, I\right\}$. By a simple computation where we use Lemma \ref{annnul} and  the associativity of $B$, we deduce that we have $A_{\bar{0}}.I=I=I.A_{\bar{0}}$. Finally, to prove that $I$ is irreducible, it is just sufficient to verify that all $A_{\bar{0}}$-sub-bimodule of $I$ are  graded ideals of $A$ which are included in $I$ and then we conclude by the minimality of $I$. Conversely, we suppose that $I$ is a  non-trivial irreducible $A_{\bar{0}}$-sub-bimodule of $A_{\bar{1}}$ such that $I .A_{\bar{0}}= A_{\bar{0}}. I= I$. Let $J$ be a graded two-sided ideal of $A$ such that $J\subseteq I$. It is clear that $J$ is a $A_{\bar{0}}$-sub-bimodule of $A_{\bar{1}}$ which is included in $I$. Since $I$ is an irreducible $A_{\bar{0}}$-sub-bimodule, then we have $J=\left\{0\right\}$ or $J= I$ and so it follows that $I$ is minimal.\\     
 \end{proof}

\begin{lemma}\label{SS'}
Let $(A=A_{\bar{0}}\oplus A_{\bar{1}},B)$ be an odd-symmetric $B$-irreducible non-simple associative superalgebra such that $A_{\bar{0}}$ is a semi-simple associative algebra and $I$ a minimal graded two-sided ideal of $A$. If we note $S'= A_{\bar{0}}\cap J$, where $J$ is the orthogonal of $I$ with respect to $B$, then $A_{\bar{0}}= S'\oplus S$, where $S$ is a semi-simple two-sided ideal of $A_{\bar{0}}$. In addition, we have:\\
$$\begin{tabular}{cc}
(i)\ $S.I=I.S=I$&(ii)\ $dim I=dim S= dim S.A_{\bar{1}}.$
\end{tabular}$$
\end{lemma}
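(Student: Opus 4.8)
The plan is to read off $S'$ as the $B$-orthogonal of $I$ inside $A_{\bar{0}}$, to exploit that the odd non-degenerate form $B$ restricts to a perfect pairing between $A_{\bar{0}}$ and $A_{\bar{1}}$, and then to use the central idempotent of $S$ for the final dimension count.

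First I would record that, since $B$ is odd, $B(A_{\bar{0}},A_{\bar{0}})=\{0\}$ and $B(A_{\bar{1}},A_{\bar{1}})=\{0\}$, so by Proposition \ref{caracterisationimpaire} the map $x\mapsto B(x,\cdot)$ identifies $A_{\bar{1}}$ with ${A_{\bar{0}}}^*$; hence $B$ induces a perfect pairing $A_{\bar{0}}\times A_{\bar{1}}\to\K$ and $\dim A_{\bar{0}}=\dim A_{\bar{1}}$. Because $I\subseteq A_{\bar{1}}$ and $B(A_{\bar{1}},A_{\bar{1}})=\{0\}$, we get $A_{\bar{1}}\subseteq J$, whence $S'=A_{\bar{0}}\cap J=\{x\in A_{\bar{0}}\,:\,B(x,I)=\{0\}\}$ is exactly the orthogonal of $I$ for this pairing. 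By Lemma \ref{Iideal}$(i)$, $J$ is a two-sided ideal of $A$, so $S'=A_{\bar{0}}\cap J$ is a two-sided ideal of the semi-simple algebra $A_{\bar{0}}$; therefore it admits a complementary two-sided ideal $S$, which is again semi-simple, giving $A_{\bar{0}}=S'\oplus S$. Since $S'$ is the orthogonal of $I$ for a perfect pairing, $\dim S=\dim A_{\bar{0}}-\dim S'=\dim I$, which already gives the equality $\dim I=\dim S$ in $(ii)$.

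For $(i)$ I would show $S'.I=\{0\}=I.S'$. Given $x\in S'$, $i\in I$ and $a\in A_{\bar{0}}$, associativity of $B$ gives $B(x.i,a)=B(x,i.a)$, and $i.a\in I.A_{\bar{0}}=I$ by Proposition \ref{min}, so $B(x,i.a)=0$ because $x\in S'$; as $x.i\in A_{\bar{1}}$ and the pairing is non-degenerate, this forces $x.i=0$. Symmetrically $B(i.x,a)=B(i,x.a)$ with $x.a\in S'$ and $B(I,S')=\{0\}$ by supersymmetry, so $I.S'=\{0\}$. Feeding this into the identities $A_{\bar{0}}.I=I=I.A_{\bar{0}}$ from Proposition \ref{min} together with $A_{\bar{0}}=S'\oplus S$ yields $S.I=I=I.S$, which is $(i)$.

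Finally, for the remaining equality $\dim S=\dim S.A_{\bar{1}}$ of $(ii)$: being an ideal of a semi-simple algebra, $S$ has a unit $e$ that is a central idempotent of $A_{\bar{0}}$, and $S=A_{\bar{0}}e$, so $S.A_{\bar{1}}=e.A_{\bar{1}}$. I would then compute the orthogonal of $e.A_{\bar{1}}$ inside $A_{\bar{0}}$: for $a\in A_{\bar{0}}$ and $v\in A_{\bar{1}}$ associativity gives $B(a,e.v)=B(a.e,v)$, so $B(a,e.A_{\bar{1}})=\{0\}$ forces $a.e=0$, i.e. $a\in S'$ since $e$ is the central unit of $S$. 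Thus the orthogonal of $e.A_{\bar{1}}$ in $A_{\bar{0}}$ is precisely $S'$, and non-degeneracy of the pairing gives $\dim S.A_{\bar{1}}=\dim e.A_{\bar{1}}=\dim A_{\bar{0}}-\dim S'=\dim S$; the inclusion $I=S.I\subseteq S.A_{\bar{1}}$ then forces $S.A_{\bar{1}}=I$. The main obstacle I anticipate is this last count: one must route it through the central idempotent and the perfect pairing rather than arguing directly, and one must carefully distinguish the orthogonal taken in all of $A$ (namely $J$) from the orthogonal taken inside $A_{\bar{0}}$ (namely $S'$).
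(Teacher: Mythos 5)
Your proof is correct, and its first two thirds coincide with the paper's own argument: like the paper, you realize $S'$ as the orthogonal of $I$ under the pairing between $A_{\bar{0}}$ and $A_{\bar{1}}$, take a complementary two-sided ideal $S$ in the semi-simple algebra $A_{\bar{0}}$, obtain $\dim S=\dim I$ by the orthogonal-complement dimension count (the paper phrases this as $A=S\oplus J$, which is the same computation), and prove $S'.I=I.S'=\{0\}$ by combining associativity of $B$ with $A_{\bar{0}}.I=I=I.A_{\bar{0}}$ from Proposition \ref{min}, whence $S.I=I=I.S$. Where you genuinely diverge is the last equality $\dim S=\dim S.A_{\bar{1}}$. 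The paper introduces the linear map $\nu: S.A_{\bar{1}}\to S^*$, $\nu(x)=B(x,\cdot)$, checks $B(S.A_{\bar{1}},S'\oplus A_{\bar{1}})=\{0\}$ so that non-degeneracy of $B$ makes $\nu$ injective, and thus gets only the inequality $\dim S.A_{\bar{1}}\le\dim S$; it then closes the loop by the sandwich $I=S.I\subseteq S.A_{\bar{1}}$, giving $\dim I\le\dim S.A_{\bar{1}}\le\dim S=\dim I$. You instead use the unit $e$ of the semi-simple ideal $S$ (a central idempotent of $A_{\bar{0}}$), write $S.A_{\bar{1}}=e.A_{\bar{1}}$, and compute the orthogonal of $e.A_{\bar{1}}$ inside $A_{\bar{0}}$ exactly, showing it equals $S'$; this yields $\dim S.A_{\bar{1}}=\dim S$ in one stroke and, as a by-product, the stronger conclusion $S.A_{\bar{1}}=I$, which the lemma does not even assert. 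The paper's route is marginally more economical in that it never needs the unital structure of $S$ nor both inclusions for the orthogonal, while yours is more structural and pins down $S.A_{\bar{1}}$ exactly; both arguments are sound and rest on the same two pillars, namely Proposition \ref{min} and the perfect pairing coming from the odd non-degenerate form.
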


\begin{proof}
By a simple computation, we can see easily that $S'= A_{\bar{0}}\cap J=\left\{x\in A_{0}, B(x,I)=\left\{0\right\}\right\}$ is a two-sided  ideal of $A_{\bar{0}}$. Consequently, there exist a two-sided ideal $S$ of $A_{\bar{0}}$ such that $A_{\bar{0}}=S\oplus S'$. Since $I$ is a minimal two-sided ideal of $A$, then by Proposition \ref{min}, we have $B(S'.I,A)= B(S'.I,A_{\bar{0}})=B(S',I.A_{\bar{0}})= B(S',I)=\left\{0\right\}$ and so $S'.I=\left\{0\right\}$. Similar, we show that $I.S'=\left\{0\right\}$ and then we deduce that $S.I=I.S=I$, which prove the assertion $(i)$. Now we prove $(ii)$. As $B$  is odd and $I\subseteq A_{\bar{1}}$, then $A_{\bar{1}}\subseteq J$. So $A=S\oplus S'\oplus A_{\bar{1}}= S\oplus J$ and consequently $dim S=dim I$. It is clear by using $(i)$ that, $dim I\leq dim S.A_{\bar{1}}$. Finally to finish, we prove that $dim S.A_{\bar{1}}\leq dim S$. To be done, we consider the following linear map $$\nu: S.A_{\bar{1}} \longrightarrow S^*\ \ \ \mbox{defined by}\ \ \ \nu(x)=B(x,.), \ \ \mbox{where}\ \ x\in S.A_{\bar{1}}.$$   As $B(S.A_{\bar{1}}, S'\oplus A_{1})=\left\{0\right\}$, then for $x\in S.A_{\bar{1}}$ we have, $B(x,S)=\left\{0\right\}$ if and only if $B(x,A)=\left\{0\right\}$ and which implies, by the non degeneracy of $B$, that $\nu$ is injective.\\ 
\end{proof}

The following lemma is useful in the proposition below.

\begin{lemma}\label{A1semisimple}
 Let $A=A_{\bar{0}}\oplus A_{\bar{1}}$ be an associative superalgebra such that $A_{\bar{1}}$ is a semi-simple $A_{\bar{0}}$-bimodule, then $A_{\bar{1}}= {A_{\bar{1}}}^{A_{\bar{0}}}\oplus A_{\bar{0}}A_{\bar{1}}\oplus A_{\bar{1}}A_{\bar{0}}$, where ${A_{\bar{1}}}^{A_{\bar{0}}}:= \left\{x\in A_{\bar{1}},\  x.A_{\bar{0}}= \left\{0\right\}=A_{\bar{0}}.x\right\}$. \\
 \end{lemma}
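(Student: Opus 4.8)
The plan is to use complete reducibility of $A_{\bar{1}}$ as an $A_{\bar{0}}$-bimodule and to sort the irreducible summands by the behaviour of the left and right actions. First I would fix, by semi-simplicity, a decomposition $A_{\bar{1}} = \bigoplus_{k} M_{k}$ into irreducible $A_{\bar{0}}$-sub-bimodules. For each $M_{k}$ the subspaces $A_{\bar{0}}.M_{k}$ and $M_{k}.A_{\bar{0}}$ are again sub-bimodules of the irreducible $M_{k}$, so each is either $\{0\}$ or all of $M_{k}$. This partitions the indices into four classes according to which of the two actions is nonzero: both zero, only left nonzero, only right nonzero, and both nonzero.

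Next I would match the three subspaces in the statement with these classes. The summands with both actions zero span a sub-bimodule contained in $A_{\bar{1}}^{A_{\bar{0}}}$; conversely $A_{\bar{1}}^{A_{\bar{0}}}$ is a sub-bimodule on which both actions vanish, hence a sum of such trivial irreducibles, so $A_{\bar{1}}^{A_{\bar{0}}}$ is precisely their span and is a bimodule direct summand of $A_{\bar{1}}$. Writing $A_{\bar{0}}.A_{\bar{1}} = \sum_{k} A_{\bar{0}}.M_{k}$ and $A_{\bar{1}}.A_{\bar{0}} = \sum_{k} M_{k}.A_{\bar{0}}$ and using $A_{\bar{0}}.M_{k}\in\{\{0\},M_{k}\}$, I would identify $A_{\bar{0}}.A_{\bar{1}}$ with the sum of the summands whose left action is nonzero and $A_{\bar{1}}.A_{\bar{0}}$ with the sum of those whose right action is nonzero. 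Adding the three pieces then recovers $\bigoplus_{k} M_{k} = A_{\bar{1}}$, giving the spanning half of the assertion together with the independence of $A_{\bar{1}}^{A_{\bar{0}}}$ from the other two (no trivial irreducible can occur in $A_{\bar{0}}.A_{\bar{1}}$ or $A_{\bar{1}}.A_{\bar{0}}$, by uniqueness of the isotypic components).

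The main obstacle is to promote this to the stated direct sum, i.e. to rule out any overlap between $A_{\bar{0}}.A_{\bar{1}}$ and $A_{\bar{1}}.A_{\bar{0}}$. By the identifications above, $A_{\bar{0}}.A_{\bar{1}} \cap A_{\bar{1}}.A_{\bar{0}}$ is exactly the span of the irreducible summands on which both actions are simultaneously nonzero, so the directness of the three-term sum hinges on showing that no such two-sided summand occurs. I would attack this using the bimodule axioms --- in particular that the operators $L_{x}$ and $R_{y}$ commute for $x,y \in A_{\bar{0}}$ --- together with whatever structure the ambient associative superalgebra provides, in order to force every irreducible sub-bimodule of $A_{\bar{1}}$ with nonzero left action to have zero right action, and symmetrically. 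Establishing the vanishing of this two-sided part is precisely the delicate step on which the stated decomposition rests; once it is in hand, combining it with the spanning statement and the independence of $A_{\bar{1}}^{A_{\bar{0}}}$ from the second paragraph yields the direct sum $A_{\bar{1}} = A_{\bar{1}}^{A_{\bar{0}}} \oplus A_{\bar{0}}.A_{\bar{1}} \oplus A_{\bar{1}}.A_{\bar{0}}$.
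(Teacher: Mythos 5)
Your first two paragraphs are correct and follow essentially the same route as the paper: the paper picks a bimodule complement $M$ of ${A_{\bar{1}}}^{A_{\bar{0}}}$, splits $M$ into irreducibles $M_i$, and uses that $A_{\bar{0}}M_i + M_iA_{\bar{0}}$ is a nonzero sub-bimodule of the irreducible $M_i$, hence all of $M_i$. The genuine gap is in your third paragraph, and you have located it exactly: to obtain the stated three-term direct sum one must show that no irreducible sub-bimodule of $A_{\bar{1}}$ carries simultaneously a nonzero left and a nonzero right $A_{\bar{0}}$-action. You only announce that you ``would attack this using the bimodule axioms''; no argument is given, so the proposal is incomplete at precisely the decisive point.

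Moreover, this missing step cannot be supplied, because the claim is false. Take $A=Q_1(\K)$: then $A_{\bar{0}}=\K\tilde{1}$ contains the unit, $A_{\bar{1}}=\K\bar{1}$ is an irreducible $A_{\bar{0}}$-bimodule which is trivially semi-simple, and $\tilde{1}.\bar{1}=\bar{1}=\bar{1}.\tilde{1}$, so both actions are nonzero; here ${A_{\bar{1}}}^{A_{\bar{0}}}=\left\{0\right\}$ and $A_{\bar{0}}A_{\bar{1}}=A_{\bar{1}}A_{\bar{0}}=A_{\bar{1}}$, so the three-term sum is not direct. The line $\K e_{12}\subseteq M_{1,1}(\K)$, where $e_{11}e_{12}=e_{12}=e_{12}e_{22}$, behaves the same way. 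Thus the literal $\oplus$ between $A_{\bar{0}}A_{\bar{1}}$ and $A_{\bar{1}}A_{\bar{0}}$ is unattainable, and what your first two paragraphs actually establish is the weaker decomposition $A_{\bar{1}}={A_{\bar{1}}}^{A_{\bar{0}}}\oplus \left(A_{\bar{0}}A_{\bar{1}}+A_{\bar{1}}A_{\bar{0}}\right)$. You are in good company: the paper's own proof silently makes the same unjustified assertion, writing $M=A_{\bar{0}}M\oplus MA_{\bar{0}}$ for an irreducible nontrivial $M$, when irreducibility only yields $M=A_{\bar{0}}M+MA_{\bar{0}}$. The weaker version is also all that is needed downstream: in the lemma's sole application, $S.M$ and $M.S$ are eliminated by the dimension count of Lemma \ref{SS'}, and the directness against $I$ there comes from $S.M,\,M.S\subseteq M$ together with $I\cap M=\left\{0\right\}$, not from the inner $\oplus$. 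So the honest completion of your proposal is not to prove the missing step but to refute it and to replace the inner $\oplus$ in the statement by an ordinary sum.
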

 
  \begin{proof}
 Clearly ${A_{\bar{1}}}^{A_{\bar{0}}}$ is a $A_{\bar{0}}-$sub-bimodule of $A_{\bar{1}}$ and consequently there exist a non-trivial $A_{\bar{0}}-$sub-bimodule $M$ of $A_{\bar{1}}$ such that $A_{\bar{1}}={A_{\bar{1}}}^{A_{\bar{0}}}\oplus M$. First, if $M$ is irreducible, then $M=A_{\bar{0}}M\oplus MA_{\bar{0}}$ because $A_{\bar{0}}M\oplus MA_{\bar{0}}$ is a non-null $A_{\bar{0}}$-sub-bimodule of $M$. In addition, we have $A_{\bar{0}}A_{\bar{1}}=A_{\bar{0}}M$ and $A_{\bar{1}}A_{\bar{0}}=MA_{\bar{0}}$. So the result. If now $M$ is non-irreducible, then $M={{\bigoplus}^n}_{i=1} M_i$, such that $\forall i\in \left\{1,\cdots,n\right\}$, $M_i$ is an irreducible $A_{\bar{0}}$-sub-bimodule of $M$. Applying the first step of this proof to $M_i$ $\forall i\in \left\{1,\cdots,n\right\}$, we obtain that  $A_{\bar{1}}={A_{\bar{1}}}^{A_{\bar{0}}}\oplus A_{\bar{0}}M\oplus MA_{\bar{0}}$ and so $A_{\bar{1}}= {A_{\bar{1}}}^{A_{\bar{0}}}\oplus A_{\bar{0}}A_{\bar{1}}\oplus A_{\bar{1}}A_{\bar{0}}$.\\   
 \end{proof}
 
 \begin{proposition}
Let $(A,B)$ be a $B$-irreducible odd-symmetric non-simple associative superalgebra such that $A_{\bar{0}}$ is a semi-simple associative algebra. Then $A=S\oplus P(S^*)$ is a semi-direct product of a simple associative algebra $S$ by $P(S^*)$ by means of $(L^*,R^*)$.\\
\end{proposition}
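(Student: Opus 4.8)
The plan is to locate a minimal graded two-sided ideal $I$, use Lemma~\ref{SS'} to peel off the factor $S$ of $A_{\bar{0}}$ acting on it, assemble $S\oplus I$ into a non-degenerate ideal, and then let $B$-irreducibility collapse everything onto that ideal. First I would record the cheap structural facts. Since $B$ is odd and non-degenerate it pairs $A_{\bar{0}}$ with $A_{\bar{1}}$, so $\dim A_{\bar{0}}=\dim A_{\bar{1}}$; moreover $A_{\bar{0}}\neq\{0\}$ (otherwise oddness of $B$ would force $B(A_{\bar{1}},A_{\bar{1}})=\{0\}$ and hence $B\equiv 0$), so $A_{\bar{0}}$ is a non-zero, hence unital, semi-simple algebra and $\dim A\geq 2$. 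As $A$ is finite-dimensional and non-simple it has a minimal graded two-sided ideal $I$; by Remark~\ref{minimalirr} it is totally isotropic, and by Proposition~\ref{min} it is an irreducible $A_{\bar{0}}$-sub-bimodule of $A_{\bar{1}}$ with $A_{\bar{0}}I=I=IA_{\bar{0}}$, and (as in the proof of Proposition~\ref{min}) $I\cdot A_{\bar{1}}=A_{\bar{1}}\cdot I=\{0\}$.

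Next I would invoke Lemma~\ref{SS'}: with $J=I^{\perp}$ and $S'=A_{\bar{0}}\cap J$ one has $A_{\bar{0}}=S\oplus S'$, where $S$ is a semi-simple two-sided ideal, $S\cdot I=I\cdot S=I$, $S'\cdot I=I\cdot S'=\{0\}$, and $\dim S=\dim I=\dim(S\cdot A_{\bar{1}})$. Since $I=S\cdot I\subseteq S\cdot A_{\bar{1}}$ and the dimensions coincide, $S\cdot A_{\bar{1}}=I$; running the injectivity argument of Lemma~\ref{SS'} on the right (using $B(A_{\bar{1}}\cdot S,S')=B(A_{\bar{1}},S\cdot S')=\{0\}$) yields likewise $A_{\bar{1}}\cdot S=I$.

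The heart of the proof is to show that $V:=S\oplus I$ is a non-degenerate graded two-sided ideal. It is an ideal because all products close up: $A_{\bar{0}}S\subseteq S$, $A_{\bar{1}}\cdot S=S\cdot A_{\bar{1}}=I$, $A_{\bar{0}}I=IA_{\bar{0}}=I$, $A_{\bar{1}}\cdot I=I\cdot A_{\bar{1}}=\{0\}$, and $S'\cdot I=I\cdot S'=SS'=S'S=\{0\}$. It is non-degenerate: oddness of $B$ gives $B(S,S)=B(I,I)=\{0\}$, so if $s+w\in V$ (with $s\in S$, $w\in I$) is orthogonal to $V$ then $B(s,I)=\{0\}$ and $B(w,S)=\{0\}$; the first forces $s\in S\cap S'=\{0\}$, while the second, together with $B(w,S')\subseteq B(I,S')=\{0\}$ and oddness, gives $B(w,A)=\{0\}$, whence $w=0$. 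Since $V\neq\{0\}$ and $A$ is $B$-irreducible, $V=A$; thus $S'=\{0\}$, $A_{\bar{0}}=S$, $A_{\bar{1}}=I$, and in particular $A_{\bar{1}}^{2}=I^{2}=\{0\}$.

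It then remains to identify $A$ with the semi-direct product of Example~1. Now $A_{\bar{1}}=I$ is an irreducible $A_{\bar{0}}$-bimodule, and by Proposition~\ref{caracterisationimpaire} it is isomorphic to $A_{\bar{0}}^{*}$ via $\phi$; writing $A_{\bar{0}}=\bigoplus_{k}S_{k}$ as a product of simple algebras gives $A_{\bar{0}}^{*}=\bigoplus_{k}S_{k}^{*}$, so irreducibility forces a single factor and $S=A_{\bar{0}}$ is simple. Finally I would verify that $\mathrm{id}_{S}\oplus\phi:A\longrightarrow S\oplus P(S^{*})$ is an isomorphism of odd-symmetric associative superalgebras: since $\phi$ is a morphism of $A_{\bar{0}}$-bimodules, $\phi(x\cdot w)=L^{*}(x)(\phi(w))$ and $\phi(w\cdot x)=R^{*}(x)(\phi(w))$ for $x\in S$, $w\in A_{\bar{1}}$, which, with $A_{\bar{1}}^{2}=\{0\}$, matches the product $\star$, while $B(w,x)=\phi(w)(x)$ matches the form of Example~1. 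The main obstacle is the third step: producing $V=S\oplus I$, checking it is an ideal (which depends on squeezing $S\cdot A_{\bar{1}}=A_{\bar{1}}\cdot S=I$ out of the dimension equalities of Lemma~\ref{SS'}) and that it is non-degenerate, after which $B$-irreducibility does the decisive work of killing $S'$; the simplicity of $S$ and the final identification are then routine bookkeeping.
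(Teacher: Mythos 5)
Your proof is correct, and its skeleton coincides with the paper's: both locate a minimal graded two-sided ideal $I$, invoke Proposition~\ref{min} and Lemma~\ref{SS'}, show that $S\oplus I$ is a non-degenerate graded two-sided ideal so that $B$-irreducibility forces $A=S\oplus I$, and conclude with the isomorphism $s+i\mapsto s+B(i,\cdot)$ onto the semi-direct product of Example~1. You diverge in two places, both legitimately. In the middle, the paper picks an $S$-sub-bimodule complement $M$ of $I$ in $A_{\bar{1}}$, applies Lemma~\ref{A1semisimple} to write $A_{\bar{1}}=I\oplus M^s\oplus S.M\oplus M.S$, kills $S.M$ and $M.S$ by the dimension count of Lemma~\ref{SS'}, and then gets non-degeneracy of $S\oplus I$ from its orthogonality to the complement $S'\oplus M^s$; you instead squeeze $S.A_{\bar{1}}=A_{\bar{1}}.S=I$ directly out of $\dim I=\dim S=\dim S.A_{\bar{1}}$ (your right-handed version of the injectivity argument is sound, since $B(A_{\bar{1}}.S,S')=B(A_{\bar{1}},S.S')=\{0\}$ and $B(A_{\bar{1}}.S,A_{\bar{1}})=\{0\}$ by oddness), and then verify the ideal property and the non-degeneracy of $S\oplus I$ by direct orthogonality computations. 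This bypasses Lemma~\ref{A1semisimple} and the complement entirely, a small but genuine economy. Second, for the simplicity of $S$: the paper notes that $I$ is then a maximal graded ideal and cites Lemma~\ref{quotient}, whereas you use $A_{\bar{1}}\cong A_{\bar{0}}^{*}$ as $A_{\bar{0}}$-bimodules (Proposition~\ref{caracterisationimpaire}), the splitting of the dual of $\bigoplus_{k}S_{k}$ into the sub-bimodules $S_{k}^{*}$, and the irreducibility of $A_{\bar{1}}=I$ to force a single simple factor. Both routes are valid; the paper's is a one-line citation of its earlier lemmas, while yours keeps the argument self-contained at the level of bimodules.
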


\begin{proof}
As $A$ is $B$-irreducible non-simple, then there exist a minimal graded two-sided ideal $I$ of $A$. By Proposition \ref{min}, $I$ is a non-trivial irreducible $A_{\bar{0}}$-sub-bimodule of $A_{\bar{1}}$ such that $I.A_{\bar{1}}=A_{\bar{1}}.I=\left\{0\right\}$. Let $S'=A_{\bar{0}}\cap J$, where $J$ is the orthogonal of $I$ with respect to $B$. By Lemma \ref{SS'}, we have $A_{\bar{0}}= S\oplus S'$, where $S$ is a two-sided semi-simple ideal of $A_{\bar{0}}$ and $I$ is a $S$-bimodule of $A_{\bar{1}}$. Consequently, there exist a $S$-sub-bimodule $M$ of $A_{\bar{1}}$ such that $A_{\bar{1}}=I\oplus M$. We apply  Lemma \ref{A1semisimple}, we obtain that  $A_{\bar{1}}=I\oplus M^s\oplus S.M\oplus M.S$, where $M^s=\left\{x\in M, \, x.S=S.x=\left\{0\right\}\right\}$. Since $S.A_{\bar{1}}=S.I+S.M$, then by Lemma \ref{SS'}, we obtain that  $dim S.M= dim S.A_{\bar{1}} - dim SI= dim S.A_{\bar{1}} - dim I= 0$. Similar we prove that $dim M.S=0$ and we deduce that
 $$A=(S\oplus I)\oplus (S'\oplus M^s).$$
 By a direct computation, we can see easily that $S\oplus I$ is a graded two-sided ideal of $A$. Moreover, since $B(S\oplus I, S'\oplus M^s)=\left\{0\right\}$, then  $S\oplus I$ is non-degenerate. So we deduce that $A= S\oplus I$ because $A$ is $B$-irreducible. As $I$ is a maximal graded ideal of $A$ and according to Lemma \ref{quotient}, we deduce that $S$ is a simple associative algebra. Now, we consider the following map: 
\begin{eqnarray*} 
\Delta&: S\oplus I \longrightarrow S\oplus P(S^*)\\
      & s+i \longmapsto s+B(i,.),
\end{eqnarray*}
where $(S\oplus P(S^*), \star, \widetilde{B})$ is a semi-direct product of $S$ by $P(S^*)$ defined as example 1. The fact that $B$ is odd, non-degenerate and associative, implies that $\Delta$ is an isomorphism of odd-symmetric associative superalgebras. In addition, we have $\widetilde{B}(\Delta(x),\Delta(y))= B(x,y)$.\\     
\end{proof}

The corollary below, give an inductive description of odd-symmetric associative superalgebras whose even parts are semi-simple associative algebras.\\

\begin{corollary}
Let $(A,B)$ be an odd-symmetric associative superalgebra such that $A_{\bar{0}}$ is a semi-simple associative algebra, then $A$ is an orthogonal direct sums of odd-symmetric associative superalgebras $A_i$, $i\in \left\{1,\cdots,n\right\}$ such that $A_i$ is either the superalgebra $Q_n(\K)$ for a suitable $n\in \N^*$ or a semi-direct product of $M_n(\K)$ (where $n\in {\N}^*$) by $P({(M_n(\K))}^*)$ by means of $(L^*,R^*)$.\\ 
\end{corollary}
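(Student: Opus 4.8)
The plan is to assemble the preceding structural results. First I would invoke Proposition~\ref{depimp} to write $A$ as an orthogonal direct sum $A=\bigoplus_{i=1}^{n}A_i$ of $B$-irreducible graded two-sided ideals. Since each $A_i$ is $B$-irreducible, it is in particular non-degenerate, so the restriction $B_i:={B\mid}_{A_i\times A_i}$ is again odd, supersymmetric, associative and non-degenerate; thus each $(A_i,B_i)$ is an odd-symmetric associative superalgebra in its own right, and the fact that the decomposition is orthogonal is exactly what makes it an orthogonal direct sum in the sense of the statement.

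The key point to check is that the even part of each summand is again semi-simple, because this is the hypothesis needed to apply the earlier results componentwise. As the $A_i$ are graded ideals, taking even components gives $A_{\bar{0}}=\bigoplus_{i=1}^{n}(A_i)_{\bar{0}}$, and from $A_{\bar{0}}.(A_i)_{\bar{0}}\subseteq (A_i)_{\bar{0}}$ and $(A_i)_{\bar{0}}.A_{\bar{0}}\subseteq (A_i)_{\bar{0}}$ we see that each $(A_i)_{\bar{0}}$ is a two-sided ideal of the semi-simple algebra $A_{\bar{0}}$. A two-sided ideal of a finite-dimensional semi-simple associative algebra is itself semi-simple, being a product of a subset of its Wedderburn factors, so $(A_i)_{\bar{0}}$ is semi-simple for every $i$.

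With this in hand, I would split into the two cases permitted by $B$-irreducibility. If $A_i$ is simple, then Proposition~\ref{simpleimpaire} identifies it, up to isomorphism, with $Q_{n}(\K)$ for a suitable $n\in\N^*$ (the other simple family $M_{r,s}(\K)$ admits no odd-symmetric structure, by the same proposition). If $A_i$ is not simple, then $(A_i,B_i)$ is a $B$-irreducible odd-symmetric non-simple associative superalgebra whose even part is semi-simple, so the preceding proposition applies and yields $A_i=S\oplus P(S^*)$, a semi-direct product of a simple associative algebra $S$ by $P(S^*)$ by means of $(L^*,R^*)$. Since $\K$ is algebraically closed, Wedderburn's theorem forces $S\cong M_{n}(\K)$ for some $n\in\N^*$, whence $A_i$ is the semi-direct product of $M_{n}(\K)$ by $P({(M_{n}(\K))}^*)$. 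Collecting the two cases over $i\in\left\{1,\dots,n\right\}$ delivers the claimed description.

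The statement is essentially an assembly of the earlier propositions, so there is no genuinely hard step; the only point that requires care is the verification that each $B$-irreducible summand $A_i$ inherits simultaneously the odd-symmetric structure \emph{and} a semi-simple even part. This is precisely what licenses applying Proposition~\ref{simpleimpaire} and the preceding proposition summand by summand, and I expect it to be the one place where a brief argument (rather than a mere citation) is needed.
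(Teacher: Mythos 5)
Your proposal is correct and follows exactly the route the paper intends: the paper states this corollary without proof as an immediate assembly of Proposition~\ref{depimp}, Proposition~\ref{simpleimpaire}, and the preceding proposition on $B$-irreducible non-simple superalgebras with semi-simple even part. Your explicit verification that each $B$-irreducible summand inherits a semi-simple even part (as a two-sided ideal of the semi-simple algebra $A_{\bar{0}}$), together with the Wedderburn identification $S\cong M_n(\K)$ over the algebraically closed field $\K$, supplies precisely the details the paper leaves implicit.
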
      
     
Now, we consider the general case where $(A,B)$ is an odd-symmetric associative superalgebra whose even part $A_{\bar{0}}$ is a semi-simple $A_{\bar{0}}$-bimodule and we are going to give an inductive description of these superalgebras. We will show that the particular case where $A_{\bar{0}}$ is a semi-simple associative algebra and the example 2 will be very useful in the determination of this inductive description.\\ 

\begin{lemma}\label{ss}
 Let $(A= A_{\bar{0}}\oplus A_{\bar{1}},B)$ be an odd-symmetric associative superalgebra, then: 
 \begin{eqnarray*}
A_{\bar{1}}\ \mbox{is a semi-simple}\ A_{\bar{0}}\mbox{-bimodule if and only if}\ \ \  A_{\bar{0}}\ \mbox{is a semi-simple}\  A_{\bar{0}}\mbox{-bimodule}.\\
\end{eqnarray*}
 \end{lemma}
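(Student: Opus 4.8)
The plan is to reduce the equivalence to a purely module-theoretic fact about dual bimodules, exploiting the characterization of odd-symmetric structures already established. By Proposition \ref{caracterisationimpaire}, the odd-symmetric structure $B$ supplies an isomorphism of $A_{\bar{0}}$-bimodules $\phi\colon A_{\bar{1}}\longrightarrow (A_{\bar{0}})^{*}$, where $(A_{\bar{0}})^{*}$ carries the bimodule structure $(L^{*},R^{*})$ recalled in subsection \ref{incompatibility}. Since semi-simplicity of a bimodule is preserved under bimodule isomorphisms, the statement ``$A_{\bar{1}}$ semi-simple $\Longleftrightarrow$ $A_{\bar{0}}$ semi-simple'' becomes equivalent to: the regular bimodule $A_{\bar{0}}$ is semi-simple if and only if its dual $(A_{\bar{0}})^{*}$ is. Thus it suffices to prove, for an arbitrary finite-dimensional $A_{\bar{0}}$-bimodule $M$, that $M$ is semi-simple iff $M^{*}$ is semi-simple, and then to specialize to $M=A_{\bar{0}}$.

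For this key step I would set up the orthogonality (annihilator) correspondence $N\mapsto N^{\perp}=\{f\in M^{*}: f(N)=\{0\}\}$ between sub-bimodules $N$ of $M$ and sub-bimodules of $M^{*}$. First I check that $N^{\perp}$ is genuinely an $A_{\bar{0}}$-sub-bimodule: if $f(N)=\{0\}$ then $(L^{*}(x)f)(n)=f(n.x)=0$ and $(R^{*}(x)f)(n)=f(x.n)=0$ for all $n\in N$ and $x\in A_{\bar{0}}$, precisely because $N$ is stable under both multiplications. As $\dim N^{\perp}=\dim M-\dim N$, the map $N\mapsto N^{\perp}$ is an order-reversing bijection from the lattice of sub-bimodules of $M$ onto that of $M^{*}$. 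In particular $M$ is simple exactly when $M^{*}$ has no sub-bimodule other than $\{0\}$ and $M^{*}$, i.e. when $M^{*}$ is simple. Consequently, writing a semi-simple $M$ as $M=\bigoplus_{i}M_{i}$ with each $M_{i}$ simple yields $M^{*}=\bigoplus_{i}M_{i}^{*}$ with each $M_{i}^{*}$ simple, so $M^{*}$ is semi-simple.

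The converse follows by reflexivity: the canonical evaluation map $M\to M^{**}$ is an isomorphism of $A_{\bar{0}}$-bimodules (one verifies that applying the $(L^{*},R^{*})$ construction twice returns the original left and right actions), so if $M^{*}$ is semi-simple then $M\cong(M^{*})^{*}$ is semi-simple by the implication just proved applied to $M^{*}$. Taking $M=A_{\bar{0}}$ and combining with the bimodule isomorphism $A_{\bar{1}}\cong(A_{\bar{0}})^{*}$ of the first paragraph gives the desired equivalence. The only delicate point is the bookkeeping of the left and right actions: the dual structure $(L^{*},R^{*})$ transposes and swaps the two sides, so one must confirm both that $N^{\perp}$ is stable under $L^{*}$ and $R^{*}$ and that the double dual recovers the genuine regular actions; each of these is a short computation once the formulas $L^{*}(x)(f)=f\circ R_{x}$ and $R^{*}(x)(f)=f\circ L_{x}$ are unwound.
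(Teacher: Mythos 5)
Your proof is correct, and on the harder implication it takes a genuinely different route from the paper's. Both arguments begin identically: Proposition \ref{caracterisationimpaire} reduces the lemma to showing that $A_{\bar{0}}$ is a semi-simple $A_{\bar{0}}$-bimodule if and only if ${A_{\bar{0}}}^*$ is. For the implication from semi-simplicity of ${A_{\bar{0}}}^*$ to that of $A_{\bar{0}}$, the paper argues with annihilators much as you do: given a sub-bimodule $M$ of $A_{\bar{0}}$ it forms $\tilde{M}=\left\{f\in {A_{\bar{0}}}^*,\ {f\mid}_M=0\right\}$, picks a complement $T$ of $\tilde{M}$ in ${A_{\bar{0}}}^*$, and checks by a dimension count that $\tilde{T}=\left\{x\in A_{\bar{0}},\ f(x)=0,\ \forall f\in T\right\}$ complements $M$. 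For the converse, however, the paper does not dualize simple summands; it uses the decomposition $A_{\bar{0}}=S_1\oplus\cdots\oplus S_n\oplus Ann(A_{\bar{0}})$, endows $A_{\bar{0}}$ with an explicit symmetric, associative, non-degenerate form $\gamma$ (trace forms on the simple ideals $S_i$, an arbitrary non-degenerate symmetric form on the annihilator), and invokes Proposition \ref{caracterisationpaire} to get $A_{\bar{0}}\cong {A_{\bar{0}}}^*$ as bimodules, so semi-simplicity transfers through this self-duality of the regular bimodule. Your argument (simples dualize to simples via the annihilator correspondence, direct sums dualize to direct sums, and reflexivity $M\cong M^{**}$ gives the reverse implication) is more elementary and more general: it applies to an arbitrary finite-dimensional bimodule and needs neither the structure theory of $A_{\bar{0}}$ nor trace forms, and it treats both directions uniformly. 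What the paper's construction buys is a stronger output that is reused later: the proof of Proposition \ref{pdtdansA} quotes not merely the lemma but the symmetric algebra $(A_{\bar{0}},\gamma)$ produced inside its proof, building the isomorphism $\eta={\phi}^{-}\circ \phi_{\bar{0}}$ from $\gamma$ and $B$; with your proof, $\gamma$ would have to be constructed separately at that point. To make your write-up airtight, do include the two routine verifications you flagged: that the annihilator in $M$ of a sub-bimodule of $M^*$ is again a sub-bimodule (this is what makes your lattice correspondence surjective, hence the dual of a simple bimodule simple), and that the evaluation map $M\to M^{**}$ intertwines the twice-dualized actions with the original ones.
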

   
\begin{proof}
 According to Proposition \ref{caracterisationimpaire}, we have $A_{\bar{1}}\cong {A_{\bar{0}}}^*$ as $A_{\bar0}$-bimodules. Using this argument, proving the Lemma \ref{ss} is equivalent to proving that $A_{\bar{0}}$ is a semi-simple $A_{\bar{0}}$-bimodule if and only if ${A_{\bar{0}}}^*$ is a semi-simple $A_{\bar{0}}$-bimodule. We suppose that ${A_{\bar{0}}}^*$ is a semi-simple $A_{\bar{0}}$-bimodule and $M$ is a $A_{\bar{0}}$-sub-bimodule of $A_{\bar{0}}$. In the following we are going to seek for a $A_{\bar{0}}$-sub-bimodule $N$ of $A_{\bar{0}}$ such that $A_{\bar{0}}= M\oplus N$. By a simple computation, we can see that $\tilde{M}:= \left\{f\in {A_{\bar{0}}}^*, {f\mid}_M=0\right\}$ is a $A_{\bar{0}}$-sub-bimodule of ${A_{\bar{0}}}^*$ such that ${A_{\bar{0}}}^*/\tilde{M}\cong M^*$ as vector spaces by means  of the linear map $\alpha: {A_{\bar{0}}}^* \longrightarrow M^*$ defined by $\alpha(f)= {f\mid}_M$. Consequently, we have 
 \begin{eqnarray}
\label{egalite1} dim A_{\bar{0}} &=& dim M + dim \tilde{M}. 
 \end{eqnarray}
 The fact that ${A^*}_{\bar{0}}$ is a semi-simple $A_{\bar{0}}$-bimodule and $\tilde{M}$ is a $A_{\bar{0}}$-sub-bimodule of ${A_{\bar{0}}}^*$, implies the existence of a $A_{\bar{0}}$-sub-bimodule $T$ of ${A_{\bar{0}}}^*$ such that ${A_{\bar{0}}}^*= T\oplus \tilde{M}$. Let $\tilde{T}:= \left\{x\in A_{\bar{0}}, f(x)=0, \, \forall f\in T\right\}$, then we can see easily that $\tilde{T}$ is a $A_{\bar{0}}$-sub-bimodule of $A_{\bar{0}}$ such that $A_{\bar{0}}/\tilde{T}\cong T^*$ as vector spaces by means of the linear map $\beta: A_{\bar{0}}\longrightarrow T^*$ defined by $\beta(x)=\tilde{x}$, where $\tilde{x}(g):= g(x),\, \forall g\in T^*$. Then, we have: 
 \begin{eqnarray}
\label{egalite2} dim A_{\bar{0}} &=& dim T + dim \tilde{T}
 \end{eqnarray}
  
  On the other hand, for $x\in \tilde{T}\cap M$ we have $f(x)=0$ $\forall f\in {A_{\bar{0}}}^*$ because $f= \phi + \varphi$ where $\phi\in T$ and $\varphi\in \tilde{M}$. So  $x=0$ and we deduce that $\tilde{T}\cap M=\left\{0\right\}$. Now, by (\ref{egalite1}), (\ref{egalite2}) and the fact that $\tilde{T}\cap M=\left\{0\right\}$, we infer that $A_{\bar{0}}=\tilde{T}\oplus M$. Conversely, we suppose that $A_{\bar{0}}$ is a semi-simple  $A_{\bar{0}}$-bimodule. Then $A_{\bar{0}}= S_1\oplus \cdots \oplus S_n\oplus Ann(A_{\bar{0}})$, where $S_i$ is a simple two-sided ideal of $A_{\bar{0}}$ and $Ann(A_{\bar{0}})$ is the annihilator of $A_{\bar{0}}$. It is well known that $A_{\bar{0}}$ is a symmetric associative algebra. More precisely, if $f_i(x,y)=tr(R_xR_y)$, $\forall x,y\in S_i$ and $g$ is an arbitrary symmetric non-degenerate bilinear form of $Ann(A_{\bar{0}})$, then the symmetric bilinear form $\gamma$  defined by:
 \begin{eqnarray*}
 \left\{0\right\} &=& {\gamma\vert}_{S_i\times S_j}= {\gamma\vert}_{S_i\times Ann(A_{\bar{0}})},\ \ \forall i,j\in \left\{1,\cdots,n\right\}\ \mbox{such that}\ i\neq j\\
f_i &:=& {\gamma\mid}_{S_i\times S_i}\ \ \ \ \mbox{and}\ \ \ \ g :={\gamma\mid}_{Ann(A_{\bar{0}})\times Ann(A_{\bar{0}})}, 
 \end{eqnarray*}
 is non-degenerate and associative bilinear form on $A_{\bar{0}}$. So $(A_{\bar{0}}, \gamma)$ is a symmetric associative algebra. Consequently, by Proposition \ref{caracterisationpaire}, we deduce that $A_{\bar{0}}\cong {A^*}_{\bar{0}}$ by means of $\phi_{\bar{0}}$ defined by $\phi_{\bar{0}}(x):= \gamma(x,.),\ \forall x\in A_{\bar{0}}$ and we conclude that ${A^*}_{\bar{0}}$ is a semi-simple $A_{\bar{0}}$-bimodule.\\
 \end{proof} 

\begin{proposition}\label{pdtdansA}
Let $(A,B)$ be an odd-symmetric associative superalgebra such that $A_{\bar{0}}$ is a semi-simple $A_{\bar{0}}-$bimodule, then:
\begin{eqnarray} 
A&:=& S_1\oplus \cdots \oplus S_n\oplus N\oplus {\hat{S}}_1\oplus \cdots \oplus {\hat{S}}_n\oplus \hat{N},
\end{eqnarray}
where $S_i$ (resp. $\hat{S}_i$) is a simple two-sided ideal (resp. an irreducible $A_{\bar{0}}$-bimodule) of $A_{\bar{0}}$ (resp. $A_{\bar{1}}$) $\forall i\in \left\{1, \cdots, n\right\}$, $N=Ann(A_{\bar{0}})$ and $\hat{N}$ is a trivial  $A_{\bar{0}}$-bimodule of $A_{\bar{1}}$ (i.e $A_{\bar{0}}.\hat{N}= \left\{0\right\}=\hat{N}.A_{\bar{0}}$). Moreover, we have:
\begin{enumerate}
\item[(i)] ${\hat{S}}_i{\hat{S}}_j= \left\{0\right\}= {\hat{S}}_j{\hat{S}}_i$, for $i,j\in \left\{1,\cdots,n\right\}$ such that $i\neq j$,
\item[(ii)] ${\hat{S}}_i{\hat{S}}_i\in \left\{\left\{0\right\}, S_{i} \right\}$, $\forall i\in \left\{1,\cdots, n\right\}$,
\item[(iii)] $\hat{N} {\hat{S}}_i=\left\{0\right\}=  {\hat{S}}_i \hat{N}$, $\forall i\in \left\{1, \cdots, n\right\}$,
\item[(iv)] $\hat{N} \hat{N}\subseteq N$.
\end{enumerate}
\end{proposition}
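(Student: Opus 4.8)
The plan is to read off the entire decomposition from two ingredients that are already available: the bimodule structure of the even part, and the bimodule isomorphism $A_{\bar{1}}\cong{A_{\bar{0}}}^{*}$ furnished by the odd structure (Proposition \ref{caracterisationimpaire}). First, exactly as in the proof of Lemma \ref{ss}, the semisimplicity of the $A_{\bar{0}}$-bimodule $A_{\bar{0}}$ gives $A_{\bar{0}}=S_{1}\oplus\cdots\oplus S_{n}\oplus N$, where each $S_{i}$ is a simple two-sided ideal and $N=Ann(A_{\bar{0}})$. I would let $e_{i}$ be the unit of the simple algebra $S_{i}$; the $e_{i}$ are pairwise orthogonal idempotents, central in $A_{\bar{0}}$, and satisfy $e_{i}S_{j}=S_{j}e_{i}=\delta_{ij}S_{i}$, $e_{i}N=Ne_{i}=\{0\}$, and $e_{i}A_{\bar{0}}=A_{\bar{0}}e_{i}=S_{i}$.

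Next, by Lemma \ref{ss} the bimodule $A_{\bar{1}}$ is semisimple, and by Proposition \ref{caracterisationimpaire} the map $\phi:A_{\bar{1}}\to{A_{\bar{0}}}^{*}$, $\phi(x)=B(x,\cdot)$, is an isomorphism of $A_{\bar{0}}$-bimodules. Dualizing the decomposition of $A_{\bar{0}}$ yields ${A_{\bar{0}}}^{*}=\widehat{S_{1}}\oplus\cdots\oplus\widehat{S_{n}}\oplus\widehat{N}$, where $\widehat{S_{i}}$ (resp. $\widehat{N}$) consists of the functionals vanishing on $\bigoplus_{j\neq i}S_{j}\oplus N$ (resp. on all the $S_{j}$); as $A_{\bar{0}}$-bimodules $\widehat{S_{i}}\cong{S_{i}}^{*}$ and $\widehat{N}\cong N^{*}$. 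Since $S_{i}$ is simple it is irreducible as a bimodule over itself, hence ${S_{i}}^{*}$, and therefore $\widehat{S_{i}}$, is irreducible; and $N$ is a trivial bimodule, so $\widehat{N}$ is trivial. Putting $\hat{S}_{i}:=\phi^{-1}(\widehat{S_{i}})$ and $\hat{N}:=\phi^{-1}(\widehat{N})$ transports this to $A_{\bar{1}}=\hat{S}_{1}\oplus\cdots\oplus\hat{S}_{n}\oplus\hat{N}$, with each $\hat{S}_{i}$ an irreducible $A_{\bar{0}}$-bimodule and $\hat{N}$ trivial; together with the decomposition of $A_{\bar{0}}$ this is the announced decomposition.

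The computational heart is to record how the $e_{i}$ act on these summands. Using $L^{*}(e_{i})(f)=f\circ R_{e_{i}}$ and $R^{*}(e_{i})(f)=f\circ L_{e_{i}}$ one checks that both $L^{*}(e_{i})$ and $R^{*}(e_{i})$ act on ${A_{\bar{0}}}^{*}$ as the projection onto $\widehat{S_{i}}$; transporting through $\phi$ gives
\[
e_{i}\hat{S}_{j}=\hat{S}_{j}e_{i}=\delta_{ij}\hat{S}_{j},\qquad e_{i}\hat{N}=\hat{N}e_{i}=\{0\}.
\]
Granting these relations, properties (i)--(iv) follow from associativity by short manipulations. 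For (i), if $x\in\hat{S}_{i}$ and $y\in\hat{S}_{j}$ with $i\neq j$, then $xy=(xe_{i})y=x(e_{i}y)=0$. For (iii), if $x\in\hat{N}$ and $y\in\hat{S}_{i}$ then $xy=x(e_{i}y)=(xe_{i})y=0$, and symmetrically $\hat{S}_{i}\hat{N}=\{0\}$. For (ii), if $x,y\in\hat{S}_{i}$ then $xy=e_{i}(xy)e_{i}\in S_{i}$, and since $S_{i}\hat{S}_{i}\subseteq\hat{S}_{i}$ and $\hat{S}_{i}S_{i}\subseteq\hat{S}_{i}$ the subspace $\hat{S}_{i}\hat{S}_{i}$ is a two-sided ideal of the simple algebra $S_{i}$, hence equals $\{0\}$ or $S_{i}$. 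For (iv), if $x,y\in\hat{N}$ then $e_{j}(xy)=(e_{j}x)y=0$ for every $j$, so $xy$ has vanishing component in each $S_{j}$, that is $xy\in N$.

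The main obstacle is the idempotent action displayed above: one must be careful that $\phi$ is a genuine $A_{\bar{0}}$-bimodule isomorphism, so that left and right multiplication by $e_{i}$ in $A$ correspond exactly to $L^{*}(e_{i})$ and $R^{*}(e_{i})$ on ${A_{\bar{0}}}^{*}$, and that the multiplicities in the dual decomposition match those of $A_{\bar{0}}$, producing precisely $n$ nontrivial irreducible summands $\hat{S}_{i}$. Once these orthogonality relations are in place, the product rules (i)--(iv) are immediate, so essentially all the work lies in the structural identification of the summands rather than in the computations.
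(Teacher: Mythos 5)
Your proof is correct, and it shares the paper's overall skeleton (transport the canonical decomposition $A_{\bar{0}}=S_1\oplus\cdots\oplus S_n\oplus N$ over to $A_{\bar{1}}$ through the bimodule isomorphism $\phi\colon A_{\bar{1}}\to {A_{\bar{0}}}^{*}$ of Proposition \ref{caracterisationimpaire}), but it differs genuinely in both of its key mechanisms. For the decomposition itself, the paper does not dualize directly: it invokes Lemma \ref{ss} to equip $A_{\bar{0}}$ with an even symmetric structure $\gamma$, forms the second bimodule isomorphism $\phi_{\bar{0}}=\gamma(\cdot\,,\cdot)\colon A_{\bar{0}}\to {A_{\bar{0}}}^{*}$, and sets $\hat{S}_i:=\eta(S_i)$, $\hat{N}:=\eta(N)$ where $\eta=\phi^{-1}\circ\phi_{\bar{0}}$. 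Since $\gamma(S_i,\cdot)$ vanishes on the complementary summands, $\phi_{\bar{0}}(S_i)$ is exactly your annihilator subspace $\widehat{S_i}$, so the two constructions produce literally the same subspaces; yours is leaner in that it never needs the auxiliary form $\gamma$, only the dual decomposition. For properties (i)--(iv) the divergence is more substantial: the paper first proves the intermediate identities $S_i.\hat{S}_j=\hat{S}_j.S_i=\{0\}$ $(i\neq j)$, $S_i.\hat{N}=\hat{N}.S_i=\{0\}$ and $S_i.\hat{S}_i=\hat{S}_i.S_i=\hat{S}_i$ (this last one via irreducibility of $\hat{S}_i$), using associativity together with the non-degeneracy of $B$, and then obtains (ii) and (iv) by further orthogonality computations such as $B(\hat{S}_i\hat{S}_i,\hat{S}_j)=B(\hat{S}_i(\hat{S}_iS_j),\hat{S}_j)=\{0\}$. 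You instead introduce the units $e_i$ of the simple ideals $S_i$ and verify that $L^{*}(e_i)$, $R^{*}(e_i)$ are the projections onto $\widehat{S_i}$, after which every item of (i)--(iv) is a one-line idempotent manipulation and $B$ plays no further role. Your route is more elementary at that stage and exploits a fact the paper never uses, namely that the finite-dimensional simple algebras $S_i$ are unital (available here since $S_i\cong M_k(\K)$); the paper's route stays uniformly within the bilinear-form techniques it uses everywhere else, at the cost of repeatedly invoking non-degeneracy of $B$. Both are complete proofs of the statement.
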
 

\begin{proof}
According to Lemma \ref{ss}, we have $(A_{\bar{0}}= S_1\oplus \cdots \oplus S_n\oplus N,\gamma)$ is a symmetric associative algebra, where $S_i$, for all $i\in \left\{1,\cdots,n\right\}$, is a simple two-sided ideal of $A_{\bar{0}}$ and $N$ its annihilator. Consider the two isomorphisms of $A_{\bar{0}}$-bimodules  $\phi_{\bar{0}}: A_{\bar{0}}\longrightarrow A_{\bar{0}}^*$ and $\phi: A_{\bar{1}}\longrightarrow A_{\bar{0}}^*$ defined respectively by $\phi_{\bar{0}}(x):= \gamma(x,.)$ and $\phi(y):= B(y,.)$, $\forall x\in A_{\bar{0}},y\in A_{\bar{1}}$. Consequently, the map $\eta$ defined by  $\eta:= {\phi}^-\circ \phi_{\bar{0}}$ is an isomorphism of $A_{\bar{0}}$-bimodule such that: $\forall x\in A_{\bar{0}}, \, \eta(x)=\hat{x}$, where $\hat{x}\in A_{\bar{1}}$ and $\gamma(x,x')=B(\hat{x},x'),\, \forall x'\in A_{\bar{0}}$. So, we obtain that $A= S_1\oplus \cdots \oplus S_n\oplus N\oplus {\hat{S}}_1\oplus \cdots \oplus {\hat{S}}_n\oplus \hat{N}$, where ${\hat{S}}_i:= \eta(S_i)$ is an irreducible $A_{\bar{0}}$-sub-bimodule of $A_{\bar{1}}$ and $\hat{N}:= \eta(N)$ is a  trivial $A_{\bar{0}}$-sub-bimodule of $A_{\bar{1}}$. In addition, $$B(S_i, \hat{S}_j)= \left\{0\right\}, \ \ B(S_i,\hat{N})= B(N, \hat{S}_i)=\left\{0\right\}, \ \forall\, i,j\in \left\{1,\cdots,n\right\}, \ \mbox{such that}\ i\neq j.$$ 
By a simple computation, where we use the associativity and the non-degeneracy of $B$ and the fact that $\hat{S}_i$ is an irreducible $A_{\bar{0}}$-bimodule of $A_{\bar{1}}$, we verify easily that:
\begin{eqnarray*}
  N &\subseteq& Ann(A),\\
  \left\{0\right\}&=& S_i.\hat{N}=\hat{N}.S_i,\, \forall i\in \left\{1,\cdots,n\right\},\\
 \left\{0\right\}&=& S_i.\hat{S}_j=\hat{S}_j.S_i,\ \forall i,j\in \left\{1,\cdots,n\right\}\ \ \mbox{such that}\ i\neq j,\\
 \hat{S_i}&=&  S_i.\hat{S}_i=\hat{S}_i.S_i,\ \forall i\in \left\{1,\cdots, n\right\}.
\end{eqnarray*}   
Now using these identity, we prove $(i)$-$(iv)$ of Proposition \ref{pdtdansA}. Indeed, let $i,j\in \left\{1,\cdots,n\right\}$ such that $i\neq j$, then $\hat{S}_i\hat{S}_j=(\hat{S}_iS_i)\hat{S}_j= \hat{S}_i(S_i\hat{S}_j)=\left\{0\right\}$ and in the same way, we have $\hat{S}_j\hat{S}_i=\left\{0\right\}$. On the other hand, $B(\hat{S}_i\hat{S}_i, \hat{S}_j)=B(\hat{S}_i\hat{S}_i, S_j\hat{S}_j)=B(\hat{S}_i(\hat{S}_iS_j), \hat{S}_j)=\left\{0\right\}$ and $B(\hat{S}_i\hat{S}_i, \hat{N})=B(\hat{S}_i(\hat{S}_iS_i), \hat{N})=B(\hat{S}_i\hat{S}_i,S_i\hat{N})=\left\{0\right\}$, so $\hat{S}_i\hat{S}_i\subseteq S_i$. As $\hat{S}_i \hat{S}_i$ is an ideal of $S_i$ and $S_i$ is simple, we obtain that $\hat{S}_i \hat{S}_i= S_i$ or $\hat{S}_i \hat{S}_i= \left\{0\right\}$. Now, $\hat{N}\hat{S}_i=\hat{N}(S_i\hat{S}_i)=(\hat{N}S_i)\hat{S}_i=\left\{0\right\}$ and in the same way, we show that $\hat{S}_i\hat{N}=\left\{0\right\}$. Finally, $B(\hat{N}\hat{N}, \hat{S_i}) =B(\hat{N}(\hat{N}S_i), \hat{S_i})=\left\{0\right\}$ and which implies that $\hat{N}\hat{N}\subseteq N$.\\
\end{proof}

Following Proposition \ref{pdtdansA}, we conclude that any odd-symmetric associative superalgebra $A$ whose even part $A_{\bar{0}}$ is a semi-simple $A_{\bar{0}}$-bimodule is an orthogonal direct sums of graded non-degenerate two-sided ideals. More precisely:
$$A= (S_1\oplus \hat{S}_1)\oplus \cdots \oplus (S_n\oplus \hat{S}_n)\oplus (N\oplus \hat{N}),$$
such that $\forall i\in \left\{1,\cdots,n\right\}$, $I_i:=S_i\oplus \hat{S}_i$ is $B_i$-irreducible, where $B_i:= {B\mid}_{I_i\times I_i}$. In particular:
\begin{enumerate}
 \item[] If $\hat{S}_i\hat{S}_i=S_i$, we can see easily that $I_i$ is simple and so it is isomorphic to $Q_n(\K)$ with a suitable $n\in \N^*$.
  \item[] Otherwise (i.e  $\hat{S}_i\hat{S}_i=\left\{0\right\}$), $I_i:= S_i\oplus P({{S}^*}_i)$ is the semi-direct product of $S_i$ by $P({{S}^*}_i)$ by means of $(L^*,R^*)$ (for definition of $(L^*,R^*)$, see example (1)).
\end{enumerate}  
   On the other hand, the ideal $ N\oplus \hat{N}:= L_1\oplus \cdots \oplus L_m$, where for all $ j\in \left\{1,\cdots ,m\right\}$, $L_j= {(L_j)}_{\bar{0}}\oplus {(L_j)}_{\bar{1}}$ are $B_j$-irreducible graded two-sided ideals, with $B_j:= {B\mid}_{L_j\times \hat{L_j}}$. In particular: 
   \begin{enumerate}
\item[] If ${(L_j)}_{\bar{1}}.{(L_j)}_{\bar{1}}= \left\{0\right\}$, then $L_j$ is the 2-dimensional odd-symmetric associative superalgebra with null product.
\item[] Otherwise (i.e ${(L_j)}_{\bar{1}}.{(L_j)}_{\bar{1}}\neq \left\{0\right\}$), then $L_j:= P({{(L_j)}_{\bar{1}}}^*)\oplus {(L_j)}_{\bar{1}}$ is the generalized semi-direct product of $P({{(L_j)}_{\bar{1}}}^*)$ by ${(L_j)}_{\bar{1}}$ by means of ${\gamma}_j:  {(L_j)}_{\bar{1}}\times  {(L_j)}_{\bar{1}}\longrightarrow P({{(L_j)}_{\bar{1}}}^*)$ defined by ${\gamma}_j(x,y)= B_j(x.y,.),\ \forall x,y\in {(L_j)}_{\bar{1}}$. 
\end{enumerate}
In conclusion, any odd-symmetric associative superalgebra $(A,B)$ whose even part $A_{\bar{0}}$ is a semi-simple $A_{\bar{0}}$-bimodule is an orthogonal direct sums of odd-symmetric associative superalgebras $A_i$, $i\in \left\{1,\cdots,n\right\}$ such that $A_i\in \left\{ \left\{0\right\}, {\cal R},  Q_n(\K), M_n(\K)\oplus P({(M_n(\K))}^*), P(M^*)\oplus M\right\}$ where ${\cal R}$ is the 2-dimensional odd-symmetric associative superalgebra with null product, $M_n(\K)\oplus P({(M_n(\K))}^*)$, for a suitable $n\in \N^*$, is the semi-direct product of  $M_n(\K)$ by $P({(M_n(\K))}^*)$ by means of $(L^*,R^*)$ and $P(M^*)\oplus M$ is the generalized semi-direct product of an associative superalgebra $M=M_{\bar{1}}$ by $P(M^*)$ by means of $\gamma$ defined as example 2.\\

\subsection{Even-symmetric associative superalgebras whose even parts are semi-simple bimodules}
We start by characterizing the minimal graded two-sided ideal of an even-symmetric associative superalgebra whose even part is a semi-simple bimodule.\\
 
\begin{proposition}\label{idealminimal}
Let $(A,B)$ be an even-symmetric $B$-irreducible non simple associative superalgebra. If we suppose that $A$ is different from  the one-dimensional algebra with null product such that $A_{\bar{0}}$ is a semi-simple $A_{\bar{0}}$-bimodule then, $I$ is a minimal graded two-sided ideal of $A$ if and only if $I=\K x$ where $x$ is a homogeneous element of $Ann(A)$.\\ 
\end{proposition}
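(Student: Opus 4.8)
The plan is to prove the two implications separately, the converse being immediate and the direct implication carrying all the weight. I first record that the hypotheses force $\dim A>1$: a one-dimensional associative superalgebra is either the field $\K$ (which is simple) or the one-dimensional algebra with null product, and both are excluded. Hence Remark \ref{minimalirr} and Lemma \ref{quotient} are available for $A$.

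For the converse I would argue directly. If $I=\K x$ with $x$ a nonzero homogeneous element of $Ann(A)$, then $\K x$ is graded (since $x$ is homogeneous) and is a two-sided ideal (since $Ax=xA=\{0\}\subseteq \K x$); it is distinct from $\{0\}$ and from $A$ (because $\dim \K x=1<\dim A$), and any graded two-sided ideal contained in the line $\K x$ is $\{0\}$ or $\K x$. Thus $I$ is minimal, and this direction needs nothing beyond $\dim A>1$.

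For the direct implication, let $I$ be a minimal graded two-sided ideal and set $J:=I^{\perp}$. By Remark \ref{minimalirr}, $I$ is totally isotropic, so $I\subseteq J$, and Lemma \ref{Iideal}$(i)$ gives $I\cdot I\subseteq I\cdot J=\{0\}$; in particular $I$ has null product and $I_{\bar 0}^{2}=\{0\}$. Now $I_{\bar 0}=I\cap A_{\bar 0}$ is a two-sided ideal of $A_{\bar 0}$, hence an $A_{\bar 0}$-sub-bimodule. Writing $A_{\bar 0}=S_1\oplus\cdots\oplus S_n\oplus Ann(A_{\bar 0})$ with each $S_i$ a simple two-sided ideal, as in the proof of Lemma \ref{ss}, I would use that every two-sided ideal $K$ of $A_{\bar 0}$ splits as $(K\cap S)\oplus\big(K\cap Ann(A_{\bar 0})\big)$ with $S=\bigoplus_i S_i$; the square-zero condition then kills the semisimple part (since $S_i^{2}=S_i$), forcing $I_{\bar 0}\subseteq Ann(A_{\bar 0})$. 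This reduction is exactly where the hypothesis that $A_{\bar 0}$ is a semi-simple $A_{\bar 0}$-bimodule is used, and I expect it to be the main obstacle.

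The remaining steps are formal. From $I_{\bar 0}\subseteq Ann(A_{\bar 0})$ and the associativity of $B$ one gets, for $x,y\in A_{\bar 0}$ and $i\in I_{\bar 0}$, that $B(xy,i)=B(x,yi)=0$ because $yi\in A_{\bar 0}I_{\bar 0}=\{0\}$; together with $B(A_{\bar 0}A_{\bar 0},I_{\bar 1})=\{0\}$ (since $B$ is even) this yields $A_{\bar 0}A_{\bar 0}\subseteq I^{\perp}=J$. Consequently the even part of $A/J$ satisfies $\big((A/J)_{\bar 0}\big)^{2}=\overline{A_{\bar 0}A_{\bar 0}}=\{0\}$. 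By Lemma \ref{quotient}$(i)$, $A/J$ is either simple or the one-dimensional superalgebra with null product; but the even part of every simple associative superalgebra ($M_{r,s}(\K)$ or $Q_n(\K)$, by the classification recalled above) is a nonzero semisimple algebra, whose square equals itself, contradicting $\big((A/J)_{\bar 0}\big)^{2}=\{0\}$. Hence $A/J$ is the one-dimensional superalgebra with null product, and Lemma \ref{quotient}$(ii)$ gives $I=\K x$ with $x$ a homogeneous element of $Ann(A)$, completing the proof. Notice that, remarkably, no information on $A_{\bar 1}$ as a bimodule enters the argument.
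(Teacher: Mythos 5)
Your proof is correct and follows essentially the same route as the paper's: total isotropy of $I$ via Remark \ref{minimalirr}, null product via Lemma \ref{Iideal}, the containment $I_{\bar{0}}\subseteq Ann(A_{\bar{0}})$ from semi-simplicity of the $A_{\bar{0}}$-bimodule $A_{\bar{0}}$, the resulting null product of ${(A/J)}_{\bar{0}}$ ruling out a simple quotient $A/J$, and Lemma \ref{quotient}$(ii)$ to conclude. The only differences are cosmetic: the paper phrases your step $A_{\bar{0}}A_{\bar{0}}\subseteq J$ as $S\subseteq J$ with $S$ the greatest semi-simple ideal of $A_{\bar{0}}$ (the same thing, since $S=A_{\bar{0}}A_{\bar{0}}$), and you additionally write out the trivial converse implication, which the paper leaves implicit.
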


\begin{proof}
Let $I$ be a minimal graded two-sided ideal of $A$ and $J$ its orthogonal  with respect to $B$. The fact that $A$ is  $B$-irreducible, non-simple and different from the one-dimensional algebra with null product implies that $I$ is totally isotropic. According to Lemma \ref{Iideal}, $I$ is with null product. As $I$ is minimal, then $A/J$ is a one-dimensional super-algebra or a simple associative superalgebra. In the first step, we suppose that $A/J$ is a simple associative superalgebra. 
 As the minimal graded two-sided ideal $I$ is with null product, then $I_{\bar{0}}=I\cap Ann(A_{\bar{0}})$ since the $A_{\bar{0}}$-bimodule $A_{\bar{0}}$ is semi-simple i.e $A_{\bar{0}}= S\oplus Ann(A_{\bar{0}})$ where $S$ is the greatest semi-simple two-sided ideal of $A_{\bar{0}}$. Using now the associativity (resp. the parity ) of $B$, we obtain that $B(S, Ann(A_{\bar{0}}))= \left\{0\right\}$ (resp. $B(S, A_{\bar{1}})=\left\{0\right\}$). So, it follows that $S\subseteq J$ and we deduce that ${(A/J)}_{\bar{0}}\cong Ann(A_{\bar{0}})/J\cap Ann(A_{\bar{0}})$ as associative algebras. This contradicts the fact that ${({(A/J)}_{\bar{0}})}^2\neq \left\{0\right\}$ since  $A/J$ is a simple associative superalgebra. In the second step, we consider $A/J$ is a one-dimensional superalgebra with null product. Then, by Lemme \ref{quotient} $(ii)$, we have $I= \K x$ where  $x$ is a homogeneous element of $Ann(A)$.\\  
\end{proof}

\begin{proposition}\label{annulateurnul}
Let $(A,B)$ be an even-symmetric associative superalgebra such that $A_{\bar{0}}$ is a semi-simple $A_{\bar{0}}$-bimodule. If $Ann(A)= \left\{0\right\}$, then $A$ is orthogonal direct sums of simple even-symmetric associative superalgebras.\\   
\end{proposition}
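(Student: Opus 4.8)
The plan is to reduce to the $B$-irreducible case and then invoke the characterization of minimal ideals from Proposition \ref{idealminimal}. By Proposition \ref{depimp}, $A$ decomposes as an orthogonal direct sum $A = I_1 \oplus \cdots \oplus I_k$ of $B$-irreducible graded two-sided ideals, and by Lemma \ref{Iideal}(i) one has $I_j . I_l = I_l . I_j = \{0\}$ whenever $j \neq l$. It therefore suffices to show that each $(I_j, B_j)$, where $B_j := {B|}_{I_j \times I_j}$, is a simple even-symmetric associative superalgebra: indeed $B_j$ is non-degenerate because $I_j$ is non-degenerate, and it is clearly even, supersymmetric and associative, so each $(I_j, B_j)$ is an even-symmetric associative superalgebra.

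First I would check that each summand inherits the two standing hypotheses. Since the $I_j$ are graded two-sided ideals, $(I_j)_{\bar{0}}$ is an $A_{\bar{0}}$-subbimodule of $A_{\bar{0}}$, hence semi-simple as an $A_{\bar{0}}$-bimodule; as $(I_l)_{\bar{0}}$ acts trivially on $(I_j)_{\bar{0}}$ for $l \neq j$, the $A_{\bar{0}}$-module structure on $(I_j)_{\bar{0}}$ coincides with its $(I_j)_{\bar{0}}$-module structure and the two subbimodule lattices agree, so $(I_j)_{\bar{0}}$ is a semi-simple $(I_j)_{\bar{0}}$-bimodule. For the annihilator, if $x \in Ann(I_j)$ then $x . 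I_l = I_l . x = \{0\}$ for every $l$ (trivially when $l \neq j$), whence $x \in Ann(A) = \{0\}$; thus $Ann(I_j) = \{0\}$.

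Next I would argue by contradiction that each $I_j$ is simple. An $I_j$ equal to the one-dimensional superalgebra with null product would satisfy $Ann(I_j) = I_j \neq \{0\}$, contradicting the previous step, so $I_j$ is different from this algebra. If $I_j$ were $B_j$-irreducible but non-simple, then, being a nonzero finite-dimensional superalgebra, it would admit a proper nonzero graded two-sided ideal, and choosing one of least dimension yields a minimal graded two-sided ideal of $I_j$. By Proposition \ref{idealminimal} applied to $(I_j, B_j)$, this minimal ideal would be of the form $\K x$ with $x$ a nonzero homogeneous element of $Ann(I_j)$, contradicting $Ann(I_j) = \{0\}$. Hence each $I_j$ is simple, and $A$ is the orthogonal direct sum of the simple even-symmetric associative superalgebras $(I_j, B_j)$.

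The only genuinely delicate point is the transfer of the two hypotheses to the summands carried out in the second paragraph; once that is secured, Proposition \ref{idealminimal} does essentially all the work, the remaining input being merely the elementary existence of a minimal graded two-sided ideal in a nonzero finite-dimensional superalgebra.
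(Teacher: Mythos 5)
Your proof is correct and takes essentially the same route as the paper: decompose $A$ via Proposition \ref{depimp} into non-degenerate $B$-irreducible graded two-sided ideals, check each summand inherits the hypotheses, and apply Proposition \ref{idealminimal} to conclude that a non-simple summand would contain a minimal ideal $\K x$ with $x$ a nonzero homogeneous element of the annihilator, contradicting $Ann=\left\{0\right\}$. The only differences are organizational: the paper handles the $B$-irreducible case first (treating $A_{\bar{1}}=\left\{0\right\}$ separately, which automatically rules out the one-dimensional algebra with null product when applying Proposition \ref{idealminimal}), whereas you decompose first, exclude the one-dimensional null-product case via the annihilator, and spell out the transfer of the hypotheses ($Ann(I_j)=\left\{0\right\}$ and semi-simplicity of $(I_j)_{\bar{0}}$ as an $(I_j)_{\bar{0}}$-bimodule) that the paper only asserts.
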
  
  
\begin{proof}
 In order to prove this proposition, we consider the two following cases: \\
  
\textbf{First case:} We suppose that $A$ is $B$-irreducible such that $Ann(A)= \left\{0\right\}$. If $A_{\bar{1}}=\left\{0\right\}$, then $A= A_{\bar{0}}$ and consequently $A$ is a simple associative algebra. In the following, we suppose that $A_{\bar{1}}\neq \left\{0\right\}$. If in addition we suppose that $A$ is not simple, then  there exist a non-null minimal graded two-sided ideal $I$ of $A$. Following Proposition \ref{idealminimal}, $I= \K x$ where $x\in Ann(A)$. But the fact that $Ann(A)=\left\{0\right\}$, implies that $I=\left\{0\right\}$ and which contradict the fact that $I$ is minimal. So we have $A$ is simple.\\
 
\textbf{Second case:} We suppose now that $A$ is not $B$-irreducible such that $Ann(A)=\left\{0\right\}$. By Proposition \ref{depimp}, we have  $A={{\oplus}^{n}}_{k=1} A_k$, where $\forall k\in \left\{1,\cdots,n\right\}$, $A_k$ is an even-symmetric  $B$-irreducible associative superalgebra such that ${(A_k)}_{\bar{0}}$ is a semi-simple ${(A_k)}_{\bar{0}}$-bimodule and $Ann(A_k)=\left\{0\right\}$. Using the first case of this proof, we obtain that $\forall k\in \left\{1,\cdots,n\right\}$, $A_k$ is a simple even-symmetric associative superalgebra.\\   
\end{proof}

In the following, our next goal is to give an inductive description of even-symmetric associative superalgebras whose even parts are semi-simple $A_{\bar{0}}$-bimodules. By means of the proposition above, we reduce the study of these superalgebras to those that are $B$-irreducible with non-null annihilator. In \cite{Im}, authors introduced the notion of generalized double extension of even-symmetric associative superalgebra by a one-dimensional superalgebra with null product in order to give an inductive description of even-symmetric Novikov superalgebras. We recall This notion, since it will be very useful in the sequel. Let $(A,*,B)$ be an even-symmetric associative superalgebra , $\K e$ a one-dimensional superalgebra with null product, $k\in {\K}_{\mid e\mid}$ (we recall that: ${\K}_{\bar{0}}=\K$ and ${\K}_{\bar{1}}=\left\{0\right\}$), $x_0\in A_{\bar{0}}$ and $D\in {(End(A))}_{\mid e\mid}$ such that:  
  \begin{eqnarray}
 D(x_0)=G(x_0),\ \ D^2(x)=x_0*x, \ \ D\circ G= G\circ D,\ \  D(x*y)=D(x)*y, \ \  \forall x,y\in A,\label{conditionpourdim1}
 \end{eqnarray}
 where  $G\in {(End(A))}_{\mid e\mid}$ which satisfies $B(G(x), y)= B(x, D(y)), \ \forall\, x,y\in A$. The $\Z_{2}$-graded vector space $\tilde{A}:= \K e^*\oplus A\oplus \K e$ endowed with the following product: 
  \begin{eqnarray}
\nonumber e^*\in Ann(A),\ \ e.e = x_0 + k e^*,\ \  x.y = x*y + {(-1)}^{\mid e\mid} B(D(x),y)e^*,\\ 
  e.x= D(x) + {(-1)}^{\mid e\mid} B(x,x_0)e^*, \ \  x.e = G(x) + B(x,x_0)e^*, \ \ \forall x,y\in A.\label{produitavecunedimension}
  \end{eqnarray}
and the following bilinear form:   
\begin{eqnarray}
{\tilde{B}\mid }_{A\times A}:= B, \ \ \tilde{B}(e^*, e)=1, \ \ \tilde{B}(e^*, \K e^*\oplus A)= \left\{0\right\}= \tilde{B}(e, A\oplus \K e).
 \end{eqnarray}
 
 is an even-symmetric associative superalgebra. According to \cite{Im}, $\tilde{A}$ (or $(\tilde{A},\tilde{B})$) is called the even generalized  double extension of $A$ by $\K e$ by means of $(D,x_0,k)$ (resp. the odd generalized double extension of $A$ by $\K e$ by means of $(D,x_0)$) if $e$ is an even (resp. an odd) homogeneous element.  In particular, we have $G= D^*$, where $D^*$ is the adjoint of $D$ with respect to $B$, in the case of the even generalized double extension.\\

 \begin{remark}\label{rmqdeg}
 We remark that if $x_0=0$ and $k=0$, the vector space $\K e$ will be a sub-superalgebra of $\tilde{A}$. Consequently $\tilde{A}$ will be obtained by a central extension of $A$ by $\K e^*$ and a semi-direct product of $\K e^*\oplus A$ by $\K e$ (for definition of central extension and semi-direct product we can see \cite{Im}). In this case the even (resp. odd) generalized double extension will be called the even (resp. odd) double extension.\\
 \end{remark}

 Now we suppose that $A$ is an even-symmetric associative superalgebra such that $A_{\bar{0}}$ is a semi-simple $A_{\bar{0}}-$bimodule. Following the product (\ref{produitavecunedimension}) above, we notice that the even part ${\tilde{A}}_{\bar{0}}$ of the odd generalized double extension  coincide with $A_{\bar{0}}$ and so it is a semi-simple ${\tilde{A}}_{\bar{0}}$-bimodule. Whereas, the even part ${\tilde{A}}_{\bar{0}}$ of the even generalized double extension is not in general a semi-simple ${\tilde{A}}_{\bar{0}}$-bimodule. In order to give an inductive description of even-symmetric associative superalgebras whose even parts are semi-simple $A_{\bar{0}}$-bimodules, we introduce a particular notion of the even double extension which is named the elementary even double extension. Let $(A,*,B)$ be an even-symmetric associative superalgebra such that $A_{\bar{0}}$ is a semi-simple $A_{\bar{0}}-$bimodule, $\K e$ the one-dimensional algebra with null product and $D$ an even homomorphism of $A$ such that:

\begin{eqnarray}
{D\mid}_{A_{\bar{0}}}&=&0,\ D^2=0, \ \ D\circ D^*= D^*\circ D,  \nonumber\\
D(A_{\bar{1}})*A_{\bar{1}}&=&\left\{0\right\},\ D(A_{\bar{0}}*A_{\bar{1}})=\left\{0\right\},\ D(A_{\bar{1}}*A_{\bar{0}})= D(A_{\bar{1}})*A_{\bar{0}},\label{elementaire}.
  \end{eqnarray}
  where $D^*$ is the adjoint of $D$ with respect of $B$. The vector space $\overline{A}:= \K e^*\oplus A\oplus \K e$ endowed with the following product: 
  \begin{eqnarray}
 e^*&\in& Ann(\overline{A}),\ \ e\star e=0,\nonumber\\
  e\star x&=& D(x),\ \ x\star e= D^*(x),\ \ x\star y= x*y + B(D(x),y)e^*,\label{produitelementaire}
  \end{eqnarray}
  and the following supersymmetric bilinear form:
  \begin{eqnarray} 
 {\overline{B}\mid }_{A\times A}&:=& B, \ \ \overline{B}(e^*, e)=1, \ \ \overline{B}(e^*, \K e^*\oplus A)= \left\{0\right\}= \overline{B}(e, A\oplus \K e),   
  \end{eqnarray}
  is an even-symmetric superalgebra such that ${\overline{A}}_{\bar{0}}$ is a semi-simple  ${\overline{A}}_{\bar{0}}$-bimodule. $\overline{A}$ is called the elementary even double extension of $A$ by $\K e$ by means of $D$.\\
  
  We remark that any even homomorphism $D$ of an even-symmetric associative superalgebra $(A,B)$ which satisfies the conditions (\ref{elementaire}) it satisfies also the conditions (\ref{conditionpourdim1}) with $G= D^*$, where $D^*$ is the adjoint of $D$ with respect to $B$, and $x_0=0$. So, we can deduce that the elementary even double extension of $(A,B)$ by the one-dimensional algebra $\K e$ with null product by means of $D$ is the even double extension of $(A,B)$ by $\K e$ by means of $D$.\\

 \begin{lemma}
  Let $(A,B)$ be an even-symmetric $B$-irreducible associative superalgebra which is different of the one-dimensional algebra with null product. We suppose that $A_{\bar{0}}$ is a semi-simple $A_{\bar{0}}$-bimodule such that $Ann(A)\cap A_{\bar{0}}\neq \left\{0\right\}$, then $Ann(A)\cap A_{\bar{0}}$ is enclosed strictly in $Ann(A_{\bar{0}})$ (i.e $Ann(A)\cap A_{\bar{0}} \subsetneq Ann(A_{\bar{0}})$).\\
  \end{lemma}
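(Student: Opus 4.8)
The plan is to establish the trivial inclusion first and then upgrade it to a strict one by contradiction. The inclusion $Ann(A)\cap A_{\bar{0}}\subseteq Ann(A_{\bar{0}})$ is immediate, since a homogeneous even element annihilating all of $A$ in particular annihilates $A_{\bar{0}}$. To prove strictness I would suppose the contrary, namely
\[
Ann(A)\cap A_{\bar{0}} = Ann(A_{\bar{0}}) =: N,
\]
and derive a contradiction. Because $A_{\bar{0}}$ is a semi-simple $A_{\bar{0}}$-bimodule, I would use the decomposition $A_{\bar{0}} = S\oplus N$ already exploited in the proof of Proposition \ref{idealminimal}, where $S$ is the greatest semi-simple two-sided ideal of $A_{\bar{0}}$ and $N=Ann(A_{\bar{0}})$; recall that $S$ is then a finite dimensional semi-simple associative algebra, hence unital with identity $1_S$, and that $A_{\bar{0}}\cdot N = N\cdot A_{\bar{0}} = \left\{0\right\}$.

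The first key step is to check that $S$ and $N$ are orthogonal with respect to $B$. Writing $s = 1_S\cdot s$ for $s\in S$ and using the associativity of $B$, one gets $B(s,n) = B(1_S, s\cdot n) = 0$ for every $n\in N$, because $s\cdot n\in A_{\bar{0}}\cdot N = \left\{0\right\}$; hence $B(S,N) = \left\{0\right\}$. Since $B$ is even, ${B\mid}_{A_{\bar{0}}\times A_{\bar{0}}}$ is non-degenerate, and with respect to the splitting $A_{\bar{0}} = S\oplus N$ its Gram matrix is block diagonal; therefore its restriction ${B\mid}_{N\times N}$ is non-degenerate as well.

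Now I would exploit the contradiction hypothesis $N\subseteq Ann(A)$: it makes $N$ a graded two-sided ideal of $A$ (as $A\cdot N = N\cdot A = \left\{0\right\}$) with null product, non-degenerate by the previous step, and non-zero by the standing assumption $Ann(A)\cap A_{\bar{0}}\neq\left\{0\right\}$. Since $(A,B)$ is $B$-irreducible this forces $N = A$, so $A = A_{\bar{0}}$ has null product. As $A$ is different from the one-dimensional algebra with null product we have $\dim A\geq 2$; but a non-degenerate symmetric bilinear form on $A=A_{\bar{0}}$ admits an anisotropic vector $v$ (which is automatically homogeneous since $A=A_{\bar{0}}$), and $\K v$ is then a proper, non-degenerate, graded two-sided ideal, because every subspace is a two-sided ideal once the product vanishes. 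This contradicts $B$-irreducibility, so the assumed equality is impossible and the inclusion $Ann(A)\cap A_{\bar{0}}\subsetneq Ann(A_{\bar{0}})$ is strict.

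I expect the main obstacle to be the orthogonality $B(S,N) = \left\{0\right\}$ together with the resulting non-degeneracy of ${B\mid}_{N\times N}$, since this is precisely what turns the abstract annihilator $N$ into a genuine non-degenerate graded ideal to which $B$-irreducibility can be applied; once this is in place, the rest is a routine reduction to the excluded one-dimensional case.
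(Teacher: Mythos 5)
Your proof is correct, but it reaches the contradiction along a route that is essentially the mirror image of the paper's. Both arguments assume $N:=Ann(A)\cap A_{\bar{0}}=Ann(A_{\bar{0}})$, use the decomposition $A_{\bar{0}}=S\oplus N$, and rest on the orthogonality $B(S,N)=\left\{0\right\}$ obtained from associativity (you get it via the unit $1_S$ of $S$ and $s\cdot n=0$; the paper uses, implicitly, $S=S\cdot S$ together with $x\cdot S=\left\{0\right\}$ for $x\in Ann(A_{\bar{0}})$ --- the two computations are interchangeable). The difference is in how the contradiction is organized. The paper first invokes $B$-irreducibility to assert that $N$ is a \emph{degenerate} ideal, picks $x\neq 0$ in the kernel of ${B\mid}_{N\times N}$, and shows $B(x,A)=B(x,S)+B(x,N)+B(x,A_{\bar{1}})=0$, contradicting the non-degeneracy of $B$. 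You instead prove that $N$ is \emph{non-degenerate} (orthogonality plus the non-degeneracy of ${B\mid}_{A_{\bar{0}}\times A_{\bar{0}}}$, which follows from the parity of $B$), so $B$-irreducibility forces $N=A$; then $A=A_{\bar{0}}$ is a null algebra of dimension at least $2$, and an anisotropic line $\K v$ (which exists in characteristic zero) is a proper non-degenerate graded two-sided ideal, contradicting $B$-irreducibility. Your arrangement has a genuine advantage: the paper's assertion that $N$ must be degenerate is stated without justification, and justifying it requires ruling out the alternative $N=A$, which is exactly the null-algebra/anisotropic-vector argument you carry out explicitly (or else an appeal to the paper's standing non-null-product convention, which the lemma's explicit exclusion of only the one-dimensional null algebra suggests is not being relied upon here). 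So your proof is self-contained where the paper's is terse; the price is only that it is slightly longer.
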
    
    \begin{proof}
  It is clear that $Ann(A)\cap A_{\bar{0}} \subseteq Ann(A_{\bar{0}})$. If we suppose that $Ann(A)\cap A_{\bar{0}}=Ann(A_{\bar{0}})$, then $Ann(A_{\bar{0}})$ is non-null graded two-sided ideal of $A$. The fact that $A$ is $B$-irreducible and different of the one-dimensional algebra with null product, imply that $Ann(A_{\bar{0}})$ is a degenerate ideal of $A$, i.e there exist $x\in Ann(A_{\bar{0}})\setminus \left\{0\right\}$ such that $B(x,Ann(A_{\bar{0}}))= \left\{0\right\}$. Consequently, $B(x,A)= B(x,A_{\bar{0}})= B(x, S\oplus Ann(A_{\bar{0}}))= \left\{0\right\}$. We deduce that $x$ is in the orthogonal of $A$ with respect to $B$ and which contradict the fact that $B$ is non-degenerate.\\ 
  \end{proof}
  
   \begin{proposition}\label{doubleextension}
  Let $(A,.,B)$ be an even-symmetric $B$-irreducible associative superalgebra which is different of the one-dimensional algebra with null product and such that $A_{\bar{0}}$ is semi-simple $A_{\bar{0}}$-bimodule. If $Ann(A)\cap A_{\bar{0}}\neq \left\{0\right\}$ (resp. $Ann(A)\cap A_{\bar{1}}\neq \left\{0\right\}$), then $A$ is an elementary even double extension (resp. an odd generalized double extension) of an even-symmetric associative superalgebra $W$ with even part $W_{\bar{0}}$ is a semi-simple $W_{\bar{0}}$-bimodule by the one-dimensional algebra with null product (resp. by the one-dimensional superalgebra with null even part).\\   
  \end{proposition}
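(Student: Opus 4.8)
The plan is to peel off a one-dimensional homogeneous piece sitting in the annihilator, together with a hyperbolic partner, and to recognize the orthogonal complement of the resulting plane as the base superalgebra $W$. In both cases I would start by fixing a nonzero homogeneous $e^*\in Ann(A)$ of the relevant parity ($|e^*|=\bar 0$ in the first case, $|e^*|=\bar 1$ in the second). Then $I:=\K e^*$ is a graded two-sided ideal with null product; since $A$ is $B$-irreducible and is not the one-dimensional null algebra, $I$ cannot be non-degenerate, so $B(e^*,e^*)=0$ and $I$ is totally isotropic. By non-degeneracy of $B$ there is a homogeneous $e$ with $B(e^*,e)=1$, necessarily of the same parity as $e^*$ since $B$ is even. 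Setting $W:=(\K e^*\oplus\K e)^{\perp}$, the plane $\K e^*\oplus\K e$ is non-degenerate, hence $A=\K e^*\oplus W\oplus\K e$ with $J:=I^{\perp}=\K e^*\oplus W$ and $B|_{W\times W}$ non-degenerate. The first structural observation, valid in both cases, is that a product in $A$ never has a nonzero $\K e$-component: since $J$ is a two-sided ideal, $\pi_{\K e}(a.b)=0$ as soon as $a$ or $b$ lies in $J$, and $\pi_{\K e}(e.e)=B(e.e,e^*)=B(e,e.e^*)=0$ because $e^*\in Ann(A)$. Thus $W$ carries the projected associative product $x*y:=\pi_W(x.y)$, and I set $D:=\pi_W\circ L_e$ and $G:=\pi_W\circ R_e$ on $W$; associativity of $B$ gives at once $B(G(x),y)=B(x,D(y))$ for $x,y\in W$.

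\emph{Odd case.} Here $e$ is odd, so $A_{\bar 0}=W_{\bar 0}$ and the requirement that $W_{\bar 0}$ be a semi-simple $W_{\bar 0}$-bimodule is automatic. For parity reasons $e.e\in A_{\bar 0}=W_{\bar 0}$ has no $\K e^*$- or $\K e$-component, so $e.e=x_0\in W_{\bar 0}$, matching $e.e=x_0+ke^*$ with $k=0$ (as $\K_{\bar 1}=\{0\}$). The relations (\ref{conditionpourdim1}) then all fall out of associativity in $A$: $D^2(x)=\pi_W(e.(e.x))=\pi_W((e.e).x)=x_0*x$; $D(x.y)=\pi_W((e.x).y)=D(x)*y$ (discarding the $\K e^*$-terms via $e^*\in Ann(A)$); $D(x_0)=G(x_0)$ from $e.(e.e)=(e.e).e$; and $D\circ G=G\circ D$ from $e.(x.e)=(e.x).e$. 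Comparing the products and the form with (\ref{produitavecunedimension}) identifies $(A,B)$ with the odd generalized double extension of $(W,B|_W)$ by $\K e$ by means of $(D,x_0)$.

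\emph{Even case (the crux).} The point is to choose $e$ so that the extension is \emph{elementary}, i.e. $x_0=0$, $k=0$ and $D|_{W_{\bar 0}}=0$. Writing $A_{\bar 0}=S\oplus Ann(A_{\bar 0})$ with $S$ the maximal semi-simple ideal, associativity and the even parity of $B$ give $B(S,Ann(A_{\bar 0}))=\{0\}$, so $B$ restricts non-degenerately to $Ann(A_{\bar 0})$; moreover $e^*\in Ann(A)\cap A_{\bar 0}\subsetneq Ann(A_{\bar 0})$ by the preceding lemma. Hence I may pick the partner $e$ inside $Ann(A_{\bar 0})$ with $B(e^*,e)=1$, and after replacing $e$ by $e-\tfrac12 B(e,e)e^*$ also arrange $B(e,e)=0$. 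With this choice $e.A_{\bar 0}=A_{\bar 0}.e=\{0\}$, which forces $e.e=0$ (so $x_0=0$, $k=0$) and $D|_{W_{\bar 0}}=0=G|_{W_{\bar 0}}$. The remaining identities of (\ref{elementaire}) follow the same way: for $x,y\in W_{\bar 1}$ one has $D(x)*y=\pi_W(e.(x.y))=0$ since $x.y\in A_{\bar 0}$ is killed by $e$, and likewise $D(A_{\bar 0}*A_{\bar 1})=\{0\}$ and $D(A_{\bar 1}*A_{\bar 0})=D(A_{\bar 1})*A_{\bar 0}$, while $D\circ D^*=D^*\circ D$ comes from $e.(x.e)=(e.x).e$. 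Finally $W_{\bar 0}=S\oplus\big((\K e^*\oplus\K e)^{\perp}\cap Ann(A_{\bar 0})\big)$, whose second summand lies in $Ann(W_{\bar 0})$, so $W_{\bar 0}$ is again a semi-simple $W_{\bar 0}$-bimodule. Comparing with (\ref{produitelementaire}) exhibits $(A,B)$ as the elementary even double extension of $(W,B|_W)$ by $\K e$ by means of $D$.

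I expect the only genuinely delicate point to be this choice of $e$ in the even case: everything \emph{elementary} about the extension is encoded in the fact that $e$ can be taken in $Ann(A_{\bar 0})$, which is exactly what the non-degeneracy of $B$ on $Ann(A_{\bar 0})$ together with the strict inclusion $Ann(A)\cap A_{\bar 0}\subsetneq Ann(A_{\bar 0})$ supplies; once $e$ is chosen, the rest are routine associativity checks and dimension bookkeeping.
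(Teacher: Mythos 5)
Your proof is correct and follows essentially the same route as the paper: the same choice of $e^*\in Ann(A)\cap A_{\bar{0}}$ (resp. $Ann(A)\cap A_{\bar{1}}$), the same crucial step of taking the hyperbolic partner $e$ inside $Ann(A_{\bar{0}})$ in the even case (the paper gets this from $S\subseteq J$ and $A_{\bar{1}}\subseteq J$, you from the non-degeneracy of $B$ restricted to $Ann(A_{\bar{0}})$ --- equivalent arguments, and note that only the inclusion $Ann(A)\cap A_{\bar{0}}\subseteq Ann(A_{\bar{0}})$ is actually needed, not the strict one), and the same verification of conditions (\ref{elementaire}) from associativity. The only difference is presentational: where the paper realizes $W$ as the quotient $J/I$ and invokes Theorem V.2 of \cite{Im} together with Remark \ref{rmqdeg} to get the double-extension structure, you realize $W$ concretely as $(\K e^*\oplus \K e)^{\perp}$ with the projected product and check the formulas by hand, which is precisely the device the paper itself uses in Lemma \ref{lemmepreuve2} for the odd-symmetric case.
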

  
   \begin{proof}
\textbf{First step:} We suppose that $Ann(A)\cap A_{\bar{0}}\neq \left\{0\right\}$, $I:= \K e^*$, where $e^*\in Ann(A)\cap A_{\bar{0}}\setminus \left\{0\right\}$ and $J$ its orthogonal with respect to $B$. It is clear by the associativity (resp. the parity) of $B$ that $S\subseteq J$ (resp. $A_{\bar{1}}\subseteq J$) where $S$ is the greatest semi-simple ideal of $A_{\bar{0}}$. Consequently by the minimality of $I$ and the non-degeneracy of $B$, there exist $e\in Ann(A_{\bar{0}})$ such that $A= J\oplus \K e$ and $B(e^*,e)\neq 0$. As $e\in Ann(A_{\bar{0}})$, then $V:= \K e$ is the sub-superalgebra of $A$ and we deduce by Theorem V.2 of \cite{Im} and Remark  \ref{rmqdeg}, that $A$ is an even double extension of the even-symmetric associative superalgebra $W:=J/I$ by $V = \K e$ with null product by means of the homomorphism $D$ of $W$ defined by $D(\bar{x}):= \overline{e.x}$, $\forall x\in J$. Using the associativity of $A$  and the fact that $e\in Ann(A_{\bar{0}})$, we can see easily that $D$ satisfies conditions (\ref{elementaire}). So it comes that $A$ an elementary even double extension of $W$  by $\K e$ by means of $D$. In addition, it is clear that the even part $W_{\bar{0}}$ of the even-symmetric associative superalgebra $W$ is a semi-simple $W_{\bar{0}}$-bimodule.\\
  
\textbf{Second step:} Supposing now that $Ann(A)\cap A_{\bar{1}}\neq \left\{0\right\}$,  $I:= \K e^*$, where $e^*\in Ann(A)\cap A_{\bar{1}}\setminus \left\{0\right\}$ and $J$ its orthogonal with respect to $B$. Similar to the first step of this proof, there exist $e\in A_{\bar{1}}$ such that  $A= J\oplus \K e$ and $B(e^*, e)\neq 0$. Applying Theorem V.2 of \cite{Im}, we obtain that  $A$ is an odd generalized double extension of the even-symmetric associative superalgebra $W:=J/I$ by the superalgebra $V := \K e$ with null product by means of the homomorphism $D$ of $W$ defined by  $D(\bar{x}):= \overline{e.x}$, $\forall x\in J$ and $x_0:= e.e\in W_{\bar{0}}$. Moreover, it is trivially that the even-part  $W_{\bar{0}}$ of the even-symmetric associative superalgebra $W$ is a semi-simple $W_{\bar{0}}$-bimodule since it coincide with $A_{\bar{0}}$.\\
  \end{proof}
   
  \begin{remark}\label{dimensiondeux}
 Any two-dimensional even-symmetric associative superalgebra $A= A_{\bar{0}}\oplus A_{\bar{1}}$ is an even-symmetric associative algebra or the even-symmetric associative superalgebra (i.e $A_{\bar{0}}=\left\{0\right\}$). If we suppose in addition that $A_{\bar{0}}$ is a semi-simple $A_{\bar{0}}$-bimodule, then:
\begin{enumerate}
\item[] If  $A$ is $B$-irreducible, we have $A$ the elementary even double extension of $\left\{0\right\}$ by the one-dimensional algebra with null product or the odd generalized double extension of $\left\{0\right\}$ by the superalgebra of one-dimensional with null even part.
\item[] Otherwise $A$ is orthogonal direct sum of two copy of  $\K$ or an orthogonal direct sum of the $\K$ with the one-dimensional algebra with null product.
\end{enumerate}   
 \end{remark}

Let $\cal{O}$ be the set consisting of $\left\{0\right\}$, the one-dimensional algebra with null product and $M_{r,s}(\K)$ where $r\geq 1$ and $s\geq 0$.\\
 
 \begin{corollary}\label{desinduc}
  Let $(A,.,B)$ be an even-symmetric associative superalgebra such that $A_{\bar{0}}$ is a semi-simple $A_{\bar{0}}$-bimodule. If $A\notin {\cal O}$, then $A$ is obtained from a finite number of element of $\cal{O}$ by a finite sequence of elementary even double extensions and/or odd generalized double extensions and/or orthogonal direct sums.\\
 \end{corollary}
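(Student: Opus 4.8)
The plan is to prove, by induction on $\dim A$, the slightly stronger statement that \emph{every} even-symmetric associative superalgebra $A$ with $A_{\bar 0}$ a semi-simple $A_{\bar 0}$-bimodule either lies in ${\cal O}$ or is obtained from elements of ${\cal O}$ by the three listed operations; the corollary is then the case $A\notin{\cal O}$. The base cases $\dim A\leq 2$ are furnished by Remark \ref{dimensiondeux}: a one-dimensional such $A$ is $\K$ or the one-dimensional null-product algebra, both in ${\cal O}$, and a two-dimensional one is either already an orthogonal direct sum of elements of ${\cal O}$ or an elementary even (resp. odd generalized) double extension of $\left\{0\right\}$.

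For the inductive step I first dispose of the reducible case. If $A$ is not $B$-irreducible, Proposition \ref{depimp} writes $A=\bigoplus_k I_k$ as an orthogonal direct sum of $B$-irreducible graded two-sided ideals, each of strictly smaller dimension. The key point to check here is that the inductive hypothesis applies to each $I_k$: since $(I_k)_{\bar 0}$ is a two-sided ideal and a direct summand of the semi-simple $A_{\bar 0}$-bimodule $A_{\bar 0}$, it is itself a semi-simple $(I_k)_{\bar 0}$-bimodule. Applying the induction to each $I_k$ and reassembling by orthogonal direct sum gives the claim for $A$. Hence we may assume $A$ is $B$-irreducible.

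Now suppose $A$ is $B$-irreducible and not the one-dimensional null-product algebra (which lies in ${\cal O}$). I examine $Ann(A)$. If $Ann(A)=\left\{0\right\}$, then by the first case of the proof of Proposition \ref{annulateurnul} the $B$-irreducible $A$ is simple, so by Proposition \ref{simplepaire} it equals some $M_{r,s}(\K)\in{\cal O}$ and we are done. If $Ann(A)\neq\left\{0\right\}$, then since $Ann(A)$ is a graded subspace it meets $A_{\bar 0}$ or $A_{\bar 1}$ nontrivially, so $Ann(A)\cap A_{\bar 0}\neq\left\{0\right\}$ or $Ann(A)\cap A_{\bar 1}\neq\left\{0\right\}$. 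In either case Proposition \ref{doubleextension} exhibits $A$ as an elementary even double extension, respectively an odd generalized double extension, of an even-symmetric associative superalgebra $W$ of dimension $\dim A-2$ whose even part $W_{\bar 0}$ is again a semi-simple $W_{\bar 0}$-bimodule. By induction $W$ lies in ${\cal O}$ or is built from ${\cal O}$ by the allowed operations, and appending the single double extension produced by Proposition \ref{doubleextension} yields the same conclusion for $A$.

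The substance of the argument is entirely carried by the three cited results, so the only genuine obstacle is the bookkeeping: verifying that the class ``even-symmetric with semi-simple even-part bimodule'' is preserved under each reduction so that the induction can recurse. This is immediate for the core $W$ of a double extension (Proposition \ref{doubleextension} asserts it) and for the orthogonal summands $I_k$ (a direct summand of a semi-simple module is semi-simple), and the strict drop in dimension --- by $2$ for a double extension, and to a proper summand for a direct sum --- guarantees termination.
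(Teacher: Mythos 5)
Your proof is correct and takes essentially the same route as the paper's: induction on $\dim A$ with base cases supplied by Remark \ref{dimensiondeux}, then splitting into the $B$-irreducible case (settled via Proposition \ref{annulateurnul} when $Ann(A)=\left\{0\right\}$ and Proposition \ref{doubleextension} when $Ann(A)\neq\left\{0\right\}$) and the reducible case (settled via Proposition \ref{depimp}). The extra checks you make --- that a graded $Ann(A)\neq\left\{0\right\}$ must meet $A_{\bar 0}$ or $A_{\bar 1}$ nontrivially, and that semi-simplicity of the even part is inherited by the orthogonal summands --- are points the paper treats as immediate, so they only add rigor rather than changing the argument.
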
 

\begin{proof}
  We proceed by induction on the dimension of $A$. If $dim(A)=1$, then $A$ is either the field $\K$ or the one-dimensional Lie algebra with null product and in both cases $A\in {\cal O}$. If $dim(A)=2$, then by Remark \ref{dimensiondeux}, we have $A$ is obtained as Corollary \ref{desinduc}. Suppose that Corollary \ref{desinduc} is true for $dim (A)<  n$ with $n\geq 2$. We consider $dim (A)=n$. We have to analyze two cases: \\
  
\textbf{First case:} Suppose that $A$ is $B$-irreducible. If $Ann(A)=\left\{0\right\}$, then by Proposition \ref{annulateurnul}, $A$ is simple. Whereas, if $Ann(A)\neq \left\{0\right\}$, by Proposition \ref{doubleextension}, $A$ is an elementary even double extension (resp. odd generalized double extension ) of an even-symmetric associative superalgebra $W$ such that its even part $W_{\bar{0}}$ is a semi-simple $W_{\bar{0}}$-bimodule ( $dim(W)= dim(A)-2$) by the one-dimensional algebra with null product (resp. by the one-dimensional superalgebra ). Applying the induction hypothesis to $W$, we infer the corollary for $A$.\\
  
\textbf{Second case:} Now we suppose that $A$ is not $B$-irreducible. In view of Proposition \ref{depimp}, $A= {{\bigoplus}^m}_{k=1}A_k$ where $\left\{A_k,\  1\leq k\leq m\right\}$ is a set of $B$-irreducible graded ideals of $A$ such that $B(A_k, A_k')= \left\{0\right\}$, $\forall k\neq k'$. It is clear that ${(A_k)}_{\bar{0}}$ is a semi-simple ${(A_k)}_{\bar{0}}$-bimodule and $dim(A_k)< n$, $\forall k\in \left\{1,\cdots , m\right\}$. Applying the induction hypothesis to $A_k$ with $k\in \left\{1,\cdots, m\right\}$.\\
\end{proof}

\section{Inductive description of associative superalgebras with homogeneous symmetric structure}

This section will detail how an associative superalgebra with homogeneous symmetric structure can be obtained.

\subsection{Inductive description of even-symmetric associative superalgebras}

We give an inductive description of even-symmetric associative superalgebras by using the notion of generalized double extension of even-symmetric associative superalgebras. This notion was introduced in \cite{Im} in order to give an inductive description of symmetric Novikov superalgebras. We begin by recalling some definitions and results established in \cite{Im}. Let $(W, \tilde{B})$ be an even-symmetric associative superalgebra, $V$ an associative superalgebra, $\mu  : V \rightarrow End(W)$ a linear map, $\lambda : V\times V \rightarrow W$ a bilinear map and $\gamma:  V\times V \rightarrow V^{*}$ a bilinear map. According to \cite{Im}, the set composed by $\left\{W , \tilde{B} ,  V , \mu , \lambda  , \gamma\right\}$ is called a context of generalized double extension of $W$ by $V$ if the following conditions are satisfied: 
\begin{eqnarray*}
\mu' (v)( x * y)&=&x * \mu' (v)(y) \\ 
\mu'(v')\circ \mu (v)&=&\mu (v)\circ \mu' (v')\\
\mu'(v \star v')(x)&=&(\mu' (v')\circ \mu' (v))(x) - x * \lambda(v, v')\\ 
\mu' (v'')(\lambda(v, v'))&=&\mu (v)(\lambda(v', v'')) + \lambda(v, v'\star v'') -  \lambda(v \star v', v'')
\end{eqnarray*}
 \begin{eqnarray*}
 \gamma(v, v') (v'') = (-1)^{\mid v\mid (\mid v'\mid + \mid v''\mid )} \gamma(v', v'')(v)
 \end{eqnarray*}
and
\begin{eqnarray*}
 &-& \widetilde{B}(\lambda(v , v') , \lambda( v'' , v'''))+(-1)^{\mid v\mid (\mid v'\mid + \mid v''\mid + \mid v'''\mid )}\widetilde{B}(\lambda(v', v''), \lambda(v''', v))\nn\\ 
&=&\gamma(v \star v' , v'')(v''')+ \gamma(v , v')(v''\star v''') - \gamma(v , v'\star v'')(v''')\nn\\ 
&-&(-1)^{\mid v\mid (\mid v'\mid + \mid v''\mid + \mid v'''\mid)} \gamma (v', v'')(v'''\star v)
\end{eqnarray*}
where $\mu': V \longrightarrow End(W)$ such that $\widetilde{B}(\mu(v)(x) , y )  = (-1)^{\mid v\mid (\mid x\mid + \mid y\mid)} \widetilde{B}( x , \mu' (v)(y)), \,\,\,\forall v\in {\cal V}_{\mid v\mid}, \,\, x\in {\cal W}_{\mid x\mid}, y\in {\cal W}$.\\ 

\begin{theorem}\cite{Im}
Let $\left\{W , \tilde{B} , V , \mu  , \lambda  , \gamma \right\}$ be a context of generalized double extension of the even-symmetric associative superalgebra $(W,\tilde{B})$ by the associative superalgebra $V$. The $\Z_{2}$-graded vector space $A:= V\oplus W\oplus  V^*$ endowed with the following product:
\begin{eqnarray*}
(v + x + f) . (w +  y + g) &=& v \star w + \lambda(v, w) + \gamma(v, w) +  x * y + \phi(x, y)
+ \mu (v)(y)\\ &+& \nu(v, y) + (-1)^{\mid x\mid  \mid y\mid} g \circ \widetilde{R}_{v}  +  \mu' (w)(x) + \nu' (x, w) +  f \circ L_{w} 
\end{eqnarray*}
 and the following supersymmetric bilinear form $B$ such that:   
\begin{eqnarray*}
B( v + x + f ,  w + y + g) &:=&  \widetilde{B}(x, y) + (-1)^{\mid x\mid  \mid y\mid} g(v) + f(w),
\end{eqnarray*}
  $\forall\ (v  + x + f) \in A_{\mid x\mid}$, $(w +  y + g)\in  A_{\mid y\mid}$ is  an even-symmetric associative superalgebra such that $V^{*}$ is a totally isotropic graded two-sided ideal of $A$ and $W\oplus V^{*}$ its orthogonal with respect to $B$. The even-symmetric associative superalgebra $(A,B)$ is called the generalized double extension of the even-symmetric associative superalgebra $(W,\tilde{B})$ by the associative superalgebra $V$ by means of $(\mu, \lambda, \gamma)$. \\ 
\end{theorem}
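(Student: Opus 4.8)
The plan is to verify directly that the stated product and bilinear form satisfy every required axiom, separating the (mostly bookkeeping) properties of $B$ and the structural claims from the associativity of the product, which is where the context conditions actually do their work. Throughout, the strategy is to decompose each element of $A=V\oplus W\oplus V^*$ into its three homogeneous components and read everything off the explicit formulas.

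First I would settle the bilinear-form claims. The form $B$ is even because it pairs $V$ with $V^*$ and $W$ with $W$ in a degree-preserving way while $\tilde{B}$ is even. Supersymmetry of $B$ reduces to supersymmetry of $\tilde{B}$ together with a short computation of the signs attached to the dual pairings $g(v)$ and $f(w)$. For non-degeneracy I would exploit the block structure of the pairing: if $a=v+x+f$ is $B$-orthogonal to all of $A$, then pairing against $W$ forces $x=0$ by non-degeneracy of $\tilde{B}$, pairing against $V$ forces $f=0$, and pairing against $V^*$ forces $v=0$. The assertions that $V^*$ is totally isotropic and that $W\oplus V^*$ is exactly its orthogonal are then immediate from the formula $B(v+x+f,w+y+g)=\tilde{B}(x,y)+(-1)^{\mid x\mid\mid y\mid}g(v)+f(w)$.

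Next I would check that $V^*$ is a graded two-sided ideal. Inspecting the product, the only monomials that can contain a factor from $V^*$, namely $(-1)^{\mid x\mid\mid y\mid}g\circ\widetilde{R}_{v}$ and $f\circ L_{w}$, themselves lie in $V^*$, and no $V^*$-input produces a $V$- or $W$-component; hence $A.V^*+V^*.A\subseteq V^*$ and in fact $V^*.V^*=\{0\}$. This, with the orthogonality computation above, records the claimed ideal structure.

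The main obstacle is associativity of the product, $(a.b).c=a.(b.c)$. By trilinearity it suffices to test homogeneous triples with each factor drawn from one of the summands $V$, $W$, $V^*$, and I would organize the check by the summand into which the output falls. The cases landing in $W$ reduce to associativity of $*$ together with $\mu'(v)(x*y)=x*\mu'(v)(y)$ and $\mu'(v')\circ\mu(v)=\mu(v)\circ\mu'(v')$; the cases with two $V$-factors acting on $W$ invoke $\mu'(v\star v')(x)=(\mu'(v')\circ\mu'(v))(x)-x*\lambda(v,v')$; the purely $V$-valued cases reduce to associativity of $\star$; the triples with three $V$-factors whose output lies in $W$ are governed by the cocycle identity $\mu'(v'')(\lambda(v,v'))=\mu(v)(\lambda(v',v''))+\lambda(v,v'\star v'')-\lambda(v\star v',v'')$; and the triples with three $V$-factors whose output lies in $V^*$ are controlled precisely by the $\gamma$-symmetry and by the final displayed identity relating $\tilde{B}\circ(\lambda,\lambda)$ to the $\gamma$-coboundary. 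The auxiliary maps $\phi$, $\nu$, $\nu'$ into $V^*$ are pinned down in each case by the adjunction $\tilde{B}(\mu(v)(x),y)=(-1)^{\mid v\mid(\mid x\mid+\mid y\mid)}\tilde{B}(x,\mu'(v)(y))$ and the supersymmetry of $\tilde{B}$, so that the $V^*$-components of the two iterated products agree once the corresponding $V$- and $W$-components do. Finally, associativity of $B$, i.e.\ $B(a.b,c)=B(a,b.c)$, is handled by the same case split; several of these equalities coincide with the adjunction relation and the context conditions already used, so no genuinely new identity is needed. As the statement is quoted from \cite{Im}, in the write-up I would carry out a few representative cases explicitly and appeal to \cite{Im} for the remaining symmetric ones.
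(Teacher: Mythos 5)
The paper itself contains no proof of this statement: the theorem is imported verbatim from \cite{Im}, so there is no internal argument to compare against, and your direct verification is the natural (indeed essentially the only) way to prove it. Your assignment of context conditions to associativity cases is correct: associativity of $*$ and condition (1) (with its $\tilde{B}$-adjoint) govern the triples with at most one $V$-factor, condition (2) the triples of type $(V,W,V)$, condition (3) and its adjoint the triples with two $V$-factors, condition (4) the $W$-component of $(V,V,V)$, and conditions (5)--(6) its $V^*$-component; likewise the parity, supersymmetry and non-degeneracy of $B$, the totally isotropic ideal structure of $V^*$, and the identification of its orthogonal with $W\oplus V^*$ are, as you say, immediate from the block structure of the formulas.

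One point needs tightening before your outline is a proof. The claim that the $V^*$-components of the two iterated products ``agree once the corresponding $V$- and $W$-components do'' is not automatic: each $V^*$-component equality is a separate identity (for instance, for the triple $(W,W,W)$ one must show $\phi(x*y,z)=\phi(x,y*z)$), and it has to be derived from the explicit formulas of the auxiliary maps together with the adjunction between $\mu$ and $\mu'$ and the associativity and supersymmetry of $\tilde{B}$ --- the same context conditions reappear, but in adjoint form. Moreover $\phi$, $\nu$, $\nu'$ are not defined in the statement as quoted (their formulas live in \cite{Im}), so a self-contained write-up must first fix them; they are forced by requiring $B$ to be associative, namely $\phi(x,y)(v)=\tilde{B}(x,\mu'(v)(y))$, $\nu'(x,w)(v')=\tilde{B}(x,\lambda(w,v'))$ and $\nu(v,y)(v')=\nu'(y,v')(v)=\tilde{B}(y,\lambda(v',v))$, and only with these formulas in hand do the $V^*$-component checks reduce, as you intend, to conditions (1)--(6) via adjunction. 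With that made explicit (and the sign bookkeeping carried through), your plan goes through and coincides with the verification one would find in \cite{Im}.
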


\begin{remark}
 If $V:=\K e$ is a one-dimensional superalgebra, then the context $\left\{W , \tilde{B} , V , \mu  , \lambda  , \gamma \right\}$ of the generalized double extension of $W$ by $V$ is defined by $\left\{W , \tilde{B} , \K e,  D,  x_0 , \gamma_0 \right\}$ where $D  = \mu (e)\in {(End(W))}_{\mid e\mid }$, $x_0 = \lambda (e, e)\in { W}_{\overline{0}}$ and $\gamma_0  = \gamma(e, e)(e)\in \K_{\mid e\mid}$. In this case, the generalized double extension of $W$ by $V$ by means of $(\mu, \lambda, \gamma)$ is called the even (resp. odd) generalized double extension of $W$ by the $\K e$ by means of $(D,x_0,k)$ (resp. $(D,x_0)$) if $e$ is an even (resp. odd) homogeneous element.\\
\end{remark} 

The following theorem is the converse of the last theorem. 

\begin{theorem}\cite{Im}\label{descriptionnov}
Let $(A,.,B)$ be an even-symmetric $B$-irreducible non-simple associative superalgebra with dimension strictly upper than 1, $I$ a totally isotropic graded two-sided ideal of $A$ and $J$ its orthogonal with respect to $B$. We define:
\begin{enumerate} 
\item[(1)] The even-symmetric associative superalgebra $(A_{1}, Q) := (J/ I,Q)$ such that $Q$ is defined by $Q(\bar{x}, \bar{y}):= B(x,y),\ \forall x,y\in J$.
\item[(2)] the associative superalgebra $A_{2} := A/ J$.
\end{enumerate}
Then $A$ is isomorphic to the generalized double extension of $(A_{1}, Q)$ by $A_{2}$ by means of $(\mu, \lambda , \gamma)$, where $\mu : A_{2} \rightarrow {End (A_{1})}$ is an even linear map, $\lambda : A_{2}\times A_{2} \rightarrow A_{1}$ is an even bilinear map and $\gamma :A_{2}\times A_{2} \rightarrow { A_{2}}^{*}$ is an even bilinear map.\\ 
\end{theorem}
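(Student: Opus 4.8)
The plan is to realize $A$ concretely on the graded vector space $V\oplus W\oplus V^{*}$ with $W=A_{1}$ and $V=A_{2}$, and then read off the structure maps. First I would record the elementary consequences of $I$ being a totally isotropic graded two-sided ideal: since $B(I,I)=\{0\}$ we have $I\subseteq I^{\perp}=J$, and by Lemma \ref{Iideal}(i), $J$ is a graded two-sided ideal with $I.J=J.I=\{0\}$. Associativity and non-degeneracy of $B$ force $I.I=\{0\}$, because $B(I.I,A)=B(I,I.A)\subseteq B(I,I)=\{0\}$; non-degeneracy also yields $J^{\perp}=I$ and the dimension count $\dim I=\dim(A/J)=\dim A_{2}$. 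These facts make $Q(\bar x,\bar y):=B(x,y)$ well defined on $A_{1}=J/I$ (the ambiguity in the representatives lies in $B(I,J)=\{0\}$ and $B(I,I)=\{0\}$), and it is supersymmetric, even and associative, and non-degenerate precisely because $J^{\perp}=I$; thus $(A_{1},Q)$ is an even-symmetric associative superalgebra while $A_{2}=A/J$ is an associative superalgebra.

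Next I would produce a graded Witt-type decomposition adapted to these ideals. Choosing a graded complement $W$ of $I$ in $J$ identifies $W\cong A_{1}$, with $B|_{W\times W}$ corresponding to $Q$ and hence non-degenerate; consequently $A=W\oplus W^{\perp}$, the form $B$ is non-degenerate on $W^{\perp}$, and $W^{\perp}$ contains $I$ as a graded totally isotropic subspace of exactly half its dimension. The key point is to select inside $W^{\perp}$ a graded complementary totally isotropic subspace $V'$ with $W^{\perp}=I\oplus V'$ and $B|_{I\times V'}$ a perfect pairing; this graded Lagrangian-complement step is where the orthogonal-geometry input genuinely enters. It produces $A=V'\oplus W\oplus I$ with $V'$ totally isotropic and $V'\perp W$. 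I then identify $V=A_{2}$ with $V'$ via $A\to A/J$, identify $W$ with $A_{1}$, and identify $I$ with $V^{*}$ through $i\mapsto B(i,\cdot)|_{V'}$, so that $A\cong V\oplus W\oplus V^{*}$ as graded spaces and $B$ transports exactly to the bilinear form of the double extension, namely $B(v+x+f,w+y+g)=\widetilde B(x,y)+(-1)^{\mid x\mid\mid y\mid}g(v)+f(w)$.

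With this model fixed, I would extract the data by decomposing each product into its $V$-, $W$- and $V^{*}$-components. Because $I.J=J.I=\{0\}$ and $I.I=\{0\}$, the only nonzero products are $V'.V'$, $V'.W$, $W.V'$, $W.W$, and the two one-sided actions of $V'$ on $I$; these yield respectively $v\star v'+\lambda(v,v')+\gamma(v,v')$, the left action $\mu(v)$ on $W$, the right action $\mu'(w)$ on $W$, the product $*$ of $A_{1}$ together with the auxiliary term $\phi$, and the coadjoint actions $g\circ\widetilde R_{v}$ and $f\circ L_{w}$ on $V^{*}$. Here $\star$ is forced to be the product of $A_{2}=A/J$, and the auxiliary maps $\phi,\nu,\nu'$ are not free data: non-degeneracy of $B$ together with its associativity expresses them through $\mu,\lambda,\gamma$ and $\widetilde B$, so that $(\mu,\lambda,\gamma)$ is indeed the whole datum, all three maps being even by the grading.

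Finally I would verify that $(\mu,\lambda,\gamma)$ forms a context of generalized double extension. Each required identity is obtained by writing associativity $(uv)w=u(vw)$ for a suitable homogeneous triple drawn from $V',W,I$ and projecting onto one component: the relations $\mu'(v)(x*y)=x*\mu'(v)(y)$ and $\mu'(v')\circ\mu(v)=\mu(v)\circ\mu'(v')$ come from triples with two $W$-elements and one $V'$-element, the two $\lambda$-$\mu$ relations from triples with two $V'$-elements, the supersymmetry of $\gamma$ from associativity combined with supersymmetry of $B$, and the final quadratic relation between $\widetilde B(\lambda,\lambda)$ and $\gamma$ from associativity of a triple in $V'$ paired against a fourth element of $V'$ through $B$. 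I expect this last identity, which couples $\lambda$ and $\gamma$ via $\widetilde B$, to be the main obstacle, since it requires simultaneously invoking associativity of the product and associativity plus supersymmetry of $B$, while the graded Lagrangian-complement construction is the only step that is not purely computational. Once all these conditions hold, the identification $A\cong V\oplus W\oplus V^{*}$ is by construction an isomorphism of even-symmetric associative superalgebras carrying $B$ to the double-extension form, which completes the proof.
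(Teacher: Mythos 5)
Your proposal is correct, but note that the paper itself gives no proof of this statement: Theorem \ref{descriptionnov} is quoted from \cite{Im}, and the only argument of this kind actually written out in the paper is the odd-symmetric analogue, Theorem \ref{desind}, proved via Lemmas \ref{lemmepreuve1} and \ref{lemmepreuve2}. Compared with that argument, your route is genuinely different in one important respect. The paper fixes an \emph{arbitrary} graded complement $V$ of $J$ in $A$, sets $W:=I\oplus V$, takes the middle factor to be $W^{\perp}$ (a complement of $I$ in $J$ isomorphic to $J/I$), and asserts that $\Delta(i+x+v)=B(i,\cdot)+s(x)+v$ is an isometric isomorphism onto the double extension. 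You instead first choose a graded complement $W$ of $I$ inside $J$, and then construct inside $W^{\perp}$ a \emph{totally isotropic} graded complement $V'$ of $I$ by a Witt/Lagrangian-type argument (available here since $B$ is even supersymmetric over a field of characteristic zero: a hyperbolic complement on the even part, a Lagrangian complement on the symplectic odd part). This extra step, which you rightly flag as the only non-computational input, is precisely what the paper's method silently needs: writing $p_{V}$ for the projection onto the chosen complement, one finds $\widetilde B(\Delta(a),\Delta(a'))-B(a,a')=-B(v,v')$ and that the $V^{*}$-component of $\Delta(v)\Delta(v')-\Delta(v.v')$ equals $B(p_{V}(v.v'),\cdot)\vert_{V}$, so $\Delta$ is an isometric homomorphism only when $B(V,V)=\left\{0\right\}$, a condition an arbitrary graded complement need not satisfy. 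Thus the paper's approach is shorter because it never names the isotropy requirement, while yours is slightly longer but self-contained and closes that gap; the remaining ingredients (well-definedness and non-degeneracy of $Q$, reading off $\mu,\lambda,\gamma$ from the components of the product, the fact that the auxiliary maps $\phi,\nu,\nu'$ are determined by $B$-associativity rather than free data, and the derivation of the context identities from associativity of $A$ and of $B$) coincide in substance with what the paper does in the odd case.
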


\begin{proposition}
Let $(A,B)$ be an even-symmetric $B$-irreducible non-simple associative superalgebra such that $dim A>1$, then $A$ is a generalized double extension of an even-symmetric associative superalgebra by an element of $\left\{M_{r,s}(\K), Q_n(\K)\right\}$ or an even generalized double extension of an even-symmetric associative superalgebra by the one-dimensional algebra with null product or an odd generalized double extension of an even-symmetric associative superalgebra by the one-dimensional superalgebra with null even part.\\ 
\end{proposition}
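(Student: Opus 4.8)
The plan is to produce a minimal graded two-sided ideal, recognize it as totally isotropic, feed it into the converse description (Theorem \ref{descriptionnov}) to realize $A$ as a generalized double extension, and then let the structure of the quotient $A/J$ dictate which of the three announced alternatives occurs. Since $A$ is finite-dimensional and non-simple, it contains a minimal graded two-sided ideal $I$. Because $(A,B)$ is $B$-irreducible, non-simple and $\dim A>1$, Remark \ref{minimalirr} guarantees that $I$ is totally isotropic. Letting $J$ be the orthogonal of $I$ with respect to $B$, the ideal $I$ is then exactly the kind of totally isotropic graded two-sided ideal required by Theorem \ref{descriptionnov}, so that theorem realizes $A$ as the generalized double extension of the even-symmetric associative superalgebra $(A_1,Q):=(J/I,Q)$ by the associative superalgebra $A_2:=A/J$, by means of suitable $(\mu,\lambda,\gamma)$.

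Next I would invoke Lemma \ref{quotient}$(i)$, which asserts that $A_2=A/J$ is either a simple associative superalgebra or a one-dimensional superalgebra with null product. In the simple case, the classification of simple associative superalgebras (Propositions \ref{simplepaire} and \ref{simpleimpaire}, resting on \cite{Al}) forces $A_2\in\left\{M_{r,s}(\K),Q_n(\K)\right\}$, so $A$ is a generalized double extension of $(A_1,Q)$ by an element of $\left\{M_{r,s}(\K),Q_n(\K)\right\}$; this is the first alternative of the statement.

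It remains to treat the one-dimensional case, where $A_2=\K e$ with $e$ homogeneous. Here Lemma \ref{quotient}$(ii)$ gives $I=\K x$ with $x$ a homogeneous element of $Ann(A)$, and since $B$ is even the non-degeneracy pairing $B(x,e)\neq 0$ forces $x$ and $e$ to share the same parity. I would then split according to the parity of $e$ and appeal to the Remark following the first theorem of this section, which names the generalized double extension by a one-dimensional superalgebra $\K e$ as an even generalized double extension when $e$ is even and as an odd generalized double extension when $e$ is odd. This produces the remaining two alternatives: the even generalized double extension of $(A_1,Q)$ by the one-dimensional algebra with null product (when $e\in A_{\bar 0}$), and the odd generalized double extension of $(A_1,Q)$ by the one-dimensional superalgebra with null even part (when $e\in A_{\bar 1}$). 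The entire argument is essentially a dispatch over the possibilities for $A/J$; every structural ingredient has already been established, so the only point demanding care—hardly an obstacle—is correctly matching the parity of the generator $e$ to the even/odd terminology for the double extension.
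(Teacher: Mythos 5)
Your proof is correct, and it uses the same core machinery as the paper --- minimal ideal, total isotropy via Remark \ref{minimalirr}, Theorem \ref{descriptionnov}, Lemma \ref{quotient} --- but organizes it in a genuinely different way. The paper's proof splits on the annihilator: when $Ann(A)\neq\left\{0\right\}$ it settles the matter by citing Proposition V.3 of \cite{Im} (yielding the even or odd generalized double extension by a one-dimensional superalgebra), and it runs the minimal-ideal/Theorem \ref{descriptionnov} argument only when $Ann(A)=\left\{0\right\}$, in which case Lemma \ref{quotient} excludes the one-dimensional quotient and forces $A/J$ to be simple, hence in $\left\{M_{r,s}(\K),Q_n(\K)\right\}$. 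You instead apply Theorem \ref{descriptionnov} once, to an arbitrary minimal ideal, and only afterwards dispatch on the structure of $A/J$ given by Lemma \ref{quotient}: the simple case is the first alternative, and in the one-dimensional case you invoke the paper's own remark in Section 5.1, which by definition names the generalized double extension by a one-dimensional $\K e$ the even or odd generalized double extension according to the parity of $e$; your observation that $B$ being even together with $B(x,e)\neq 0$ forces $x$ and $e$ to share the same parity correctly ties Lemma \ref{quotient}(ii) to that terminology. What your organization buys is self-containedness: no appeal to the external Proposition V.3 of \cite{Im} and no case split on $Ann(A)$. What the paper's organization buys is brevity in the annihilator case, which it disposes of by citation without selecting an ideal at all. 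Both arguments are sound.
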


\begin{proof}
We consider the following two cases:\\ 

\textbf{First case:} We suppose that $Ann(A)\neq \left\{0\right\}$. Then $A$ is an even generalized double extension or an odd generalized double extension of an even-symmetric associative superalgebra by a one-dimensional associative superalgebra with null product. This case was established in \cite{Im} Proposition V.3.\\

\textbf{Second case:} We suppose that $Ann(A)= \left\{0\right\}$. Since $A$ is non-simple, then there exist a minimal graded two-sided ideal $I$ of $A$. The fact that $A$ is B-irreducible such that $dim A> 1$, implies that $I$ is totally isotropic ideal. So, we have $I\subseteq J$. On the other hand, from the non-degeneracy of $B$, there exist a $\Z_{2}$-graded vector space $V$ of $A$ such that $A= J\oplus V$ and ${B\mid}_{I\times V}$ is non-degenerate. According to Lemma \ref{quotient} and since $A/J \cong V$ as associative superalgebras, we obtain that $V$ is a simple associative superalgebras and so $V\in \left\{M_{r,s}(\K), Q_n(\K)\right\}$. Now, applying Theorem \ref{descriptionnov}, we deduce that $A$ is isomorphic to the generalized double extension of $(A_1, Q)$ by $V$, where $V\in \left\{M_{r,s}(\K), Q_n(\K)\right\}$, by means of $(\mu,\lambda,\gamma)$ such that 
\begin{eqnarray*}
\mu &:& V \rightarrow End(J/I)\ \ ;\ \ \mu(v)(x+I) = v.x + I\\
\la &:& V\times V \rightarrow  J/I\ \ ;\ \  \la(v,w) = v . w + I \\
\gamma &:& V\times V \rightarrow {V}^{*} \ \ ; \ \ \gamma (u,v) (w) = B(u . v , w). 
\end{eqnarray*} 
\end{proof}

The following corollary is an immediate consequence of the previous proposition.

\begin{corollary}
Let $(A,B)$ be an even-symmetric associative superalgebra. If $A\notin {\cal O}$, then $A$ is obtained from a finite number of element of ${\cal O}$ by a finite sequence of generalized double extensions by element of $\left\{Q_n(\K), M_{r,s}(\K)\right\}$ and/or even generalized double extensions by the one-dimensional algebra with null product and/or odd generalized double extension by the one-dimensional superalgebra with null even part and/or orthogonal direct sums.\\
\end{corollary}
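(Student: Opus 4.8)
The plan is to argue by induction on $\dim A$, in exactly the same spirit as the proof of Corollary \ref{desinduc}, with the previous proposition serving as the engine that lowers the dimension whenever $A$ is $B$-irreducible. First I would settle the base case: if $\dim A = 1$ then $A$ is either the field $\K = M_{1,0}(\K)$ or the one-dimensional algebra with null product, so in any case $A \in {\cal O}$; consequently the hypothesis $A \notin {\cal O}$ already forces $\dim A \geq 2$, and there is nothing to prove in dimension one. I would then assume the statement holds for every even-symmetric associative superalgebra of dimension $< n$ and take $\dim A = n \geq 2$ with $A \notin {\cal O}$.

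The inductive step splits according to $B$-irreducibility. If $A$ is not $B$-irreducible, Proposition \ref{depimp} expresses $A$ as an orthogonal direct sum of $B$-irreducible graded two-sided ideals $A_1,\dots,A_m$ (with $m \geq 2$), each of which is an even-symmetric associative superalgebra of dimension strictly less than $n$; applying the induction hypothesis to each $A_k$ (or observing directly that $A_k \in {\cal O}$) and reassembling by orthogonal direct sum yields the conclusion for $A$. If instead $A$ is $B$-irreducible, then it cannot be simple: a simple even-symmetric associative superalgebra is isomorphic to some $M_{r,s}(\K)$ by Proposition \ref{simplepaire}, hence lies in ${\cal O}$, contradicting $A \notin {\cal O}$. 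Thus $A$ is $B$-irreducible, non-simple and of dimension $> 1$, so the previous proposition applies and presents $A$ as a generalized double extension of an even-symmetric associative superalgebra $W$ by an element of $\{Q_n(\K), M_{r,s}(\K)\}$, or as an even generalized double extension of $W$ by the one-dimensional algebra with null product, or as an odd generalized double extension of $W$ by the one-dimensional superalgebra with null even part.

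In each of these cases $W$ is again an even-symmetric associative superalgebra, and a dimension count for the generalized double extension $A = V \oplus W \oplus V^*$ gives $\dim W = \dim A - 2\dim V < n$, since $\dim V \geq 1$. Hence the induction hypothesis is available for $W$: either $W \in {\cal O}$, in which case $A$ is obtained from a single element of ${\cal O}$ by one double extension of the prescribed type, or $W$ is itself built from elements of ${\cal O}$ by a finite sequence of the allowed operations, to which I simply append the final double extension. In either situation $A$ has the required form, which closes the induction.

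I do not anticipate a genuine obstacle here, since essentially all of the content has already been placed in the previous proposition and in Proposition \ref{simplepaire}. The only points that require real care are the verification that the dimension of $W$ drops strictly (so that the induction is well-founded, which I would confirm by the count $\dim W = \dim A - 2\dim V$), and the bookkeeping that the three constructions produced by the previous proposition are precisely the three double-extension operations listed in the statement, together with the fact that the orthogonal-direct-sum case is handled by Proposition \ref{depimp}.
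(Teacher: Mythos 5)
Your proof is correct and is essentially the paper's own argument: the paper presents this corollary as an immediate consequence of the preceding proposition, and the induction on $\dim A$ you spell out (splitting on $B$-irreducibility, handling the reducible case via Proposition \ref{depimp}, excluding the simple case via Proposition \ref{simplepaire}, and invoking the preceding proposition in the $B$-irreducible non-simple case) is exactly the argument the paper writes out in full for the analogous Corollary \ref{desinduc}. Your dimension count $\dim W = \dim A - 2\dim V < \dim A$ is the right justification that the induction is well-founded, so nothing is missing.
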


\subsection{Inductive description of odd-symmetric associative superalgebras}

We give inductive description of odd-symmetric associative superalgebras by using the notion of generalized double extension of odd-symmetric associative superalgebras. To be done, we start by introducing the generalized double extension of odd-symmetric associative superalgebras. Let $(W,*,B)$ be an odd-symmetric associative superalgebra, $(V,\star)$ an associative superalgebra which is not necessary odd-symmetric, $\mu:V \rightarrow {End(W)}$ an even linear map and  $\lambda :V\times V \rightarrow W$ an even bilinear map such that:

\begin{enumerate}[(1)]
\item \label{eq1} $\mu (v)( x*y) = \mu(v)(x)*y$,
\item \label{eq2} $ \mu(v)\circ {\mu (v')}^* = {\mu (v')}^*\circ \mu (v)$,
\item \label{eq3} $ (\mu (v)\circ \mu (v'))(x) = \la(v,v')*x + \mu(v\star v')(x)$,
\item \label{eq4} $\la(v,v'\star v'') + \mu(v)(\la(v',v'')) = {\mu (v'')}^*(\la(v, v')) + \la(v''\star v, v')$, $\forall x, y\in W$ et $ v, v' \in V$,
\end{enumerate}

where ${\mu(v)}^*$ is the adjoint map of $\mu(v)$ with respect to $B$. Next, let us consider the $\Z_{2}$-graded vector space $P(V^*)$ such that ${(P(V^*))}_{\bar{0}}:= {V^*}_{\bar{1}}\ \ \mbox{et}\ \  {(P(V^*))}_{\bar{1}}:= {V^*}_{\bar{0}}$, where $V^*$ is the dual space of $V$. By a simple computation, we can see that $P(V^*)$ have a structure of $V$-bimodule by means of $(l^*,r^*)$ such that:
  
  $$l^*: V\longrightarrow Hom(P(V^*))\ \mbox{defined by }\ l^*(v)(f):= (-1)^{\mid v\mid \mid f\mid}f\circ R_v$$ 
$$r^*: V\longrightarrow Hom(P(V^*))\ \mbox{defined by }\ r^*(v)(f):= f\circ L_v,$$
where $v\in V_{\mid v\mid}$ and $f\in {P(V^*)}_{\mid f\mid}$. Moreover, we consider the two following even bilinear map:  

\begin{eqnarray*}
\nu : V\times W \longrightarrow P(V^*) &\ \mbox{defined by}\ & \nu(v,x)(v'):= B(x,\lambda(v',v));\\
\nu' : W\times V\longrightarrow P(V^*) &\ \mbox{defined by}\ &  \nu'(x,v)(v'):= B(x,\lambda(v,v')).
\end{eqnarray*}
Besides, let us consider the following linear map $\psi: W\times W \longrightarrow P(V^*)$ defined by  $\psi(x,y)(v):= B(\mu(v)(x),y)$. Finally, let $\gamma: V\times V \longrightarrow P(V^*)$ be an even bilinear map which satisfies:

\begin{enumerate}[(1)]
\setcounter{enumi}{4}
\item \label{eq5} $\gamma(v, v') (v'') = \gamma(v', v'')(v)$,
\item \label{eq6} $\gamma(v\star v',v'')(v''') + \gamma(v,v')(v''\star v''') - \gamma(v,v'\star v'')(v''') - \gamma(v',v'')(v'''\star v) \\= B(\la(v''',v),\la(v',v'')) - B(\la(v,v'),\la(v'',v'''))$.
\end{enumerate}

We sum all this in the following definition.\\ 

\begin{defi}
The set composed by $\left\{W,B,V, \mu, \la, \gamma\right\}$ where $(W,B)$ is an odd-symmetric associative superalgebra , $V$ is an associative superalgebra, $\mu$, $\lambda$ and $ \gamma$ are three even maps defined as above and which satisfy (\ref{eq1})-(\ref{eq6}) is called a context of generalized double extension of $W$ by $V$.\\
\end{defi}

\begin{theorem}\label{deg}
Let $\left\{W,B,V,\mu,\lambda,\gamma\right\}$ be a context of generalized double extension of the odd-symmetric associative superalgebra $(W,B)$ by the associative superalgebra $V$. On the $\Z_{2}$-graded vector space  $A:= P(V^*)\oplus W\oplus V$, we define the following product by: 
\begin{eqnarray*}
(v+x+f).(v'+y+g) &=& v\star v' + \la(v,v') + \gamma(v,v') + \mu(v)(y) + \nu(v)(y) + {(-1)}^{\mid v\mid \mid g\mid} g\circ R_v \\&+& \mu^*(v')(x) + \nu'(v')(x) + f\circ L_{v'} + x*y + \ps(x,y)
\end{eqnarray*}
 and the following symmetric bilinear form $\tilde{B}$ by: 
 $$\tilde{B}(v+x+f,v'+y+g):= B(x,y) + f(v') + g(v), $$ 
where $(v+x+f)\in {A}_{\mid x\mid }$ and $(v'+y+g)\in {A}_{\mid y\mid}$. $(A, ., \tilde{B})$ is an odd-symmetric associative superalgebra which is called the generalized double extension of $W$ by $V$ by means of ($\mu, \lambda,\gamma$).\\ 
\end{theorem}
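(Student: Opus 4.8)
The plan is to verify, in order, that the product is $\Z_2$-graded and that $\tilde{B}$ is odd, that $\tilde{B}$ is supersymmetric and non-degenerate, that the product is associative, and finally that $\tilde{B}$ is associative; the last two items carry all the weight. The organising observation is that in $A=P(V^*)\oplus W\oplus V$ the $V$-component of any product is simply $v\star v'$, the $W$-component is $\lambda(v,v')+\mu(v)(y)+\mu^*(v')(x)+x*y$, and every remaining term lands in $P(V^*)$; in particular $P(V^*)$ is a graded two-sided ideal with $P(V^*).P(V^*)=\{0\}$. First I would check that each summand of the product respects the grading, using that $\mu,\lambda,\gamma,\nu,\nu',\psi$ are even, that $\star$ and $*$ are graded, and that the parity shift defining $P(V^*)$ together with the sign $(-1)^{|v|\,|f|}$ in $l^*$ makes $(l^*,r^*)$ graded. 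A short parity count (for instance $\psi(x,y)(v)=B(\mu(v)(x),y)\neq 0$ forces $|v|=|x|+|y|+\bar 1$, so $\psi(x,y)$ has degree $|x|+|y|$ in $P(V^*)$) confirms that all terms are homogeneous of the expected degree, and the same count shows $\tilde{B}$ is odd. Throughout, Koszul-sign bookkeeping induced by the parity-shifted $P(V^*)$ pervades the computations.

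Next I would dispatch the form-theoretic properties. Supersymmetry of $\tilde{B}$ is immediate: on homogeneous arguments with nonzero pairing one has $|a|+|b|=\bar 1$, so the sign $(-1)^{|a|\,|b|}$ is trivial, and supersymmetry reduces to $B(x,y)=B(y,x)$ (true since $B$ is odd) together with the interchange of the evaluation terms $f(v')$ and $g(v)$ under swapping the arguments. Non-degeneracy is equally transparent: $\tilde{B}$ pairs $W$ with itself through the non-degenerate $B$ and pairs $V$ with $P(V^*)$ through the canonical evaluation, which is a perfect pairing; since these are the only nonzero blocks, the radical of $\tilde{B}$ is trivial.

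The heart of the proof, and the main obstacle, is associativity of the product. Writing $a=v+x+f$, $b=v'+y+g$, $c=v''+z+h$ and using trilinearity, I would reduce $(a.b).c=a.(b.c)$ to matching the $V$-, $W$- and $P(V^*)$-components separately. The $V$-component reduces at once to associativity of $\star$. The $W$-component is controlled by conditions (\ref{eq1})--(\ref{eq4}): (\ref{eq1}) settles the case $(v,x,y)$, (\ref{eq2}) the case $(v,x,v')$, (\ref{eq3}) the case $(v,v',x)$, and (\ref{eq4}) the all-$V$ case, while the remaining $W$-contributions collapse by associativity of $*$ and invariance of $B$ on $W$. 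The $P(V^*)$-component is by far the most delicate, and its all-$V$ sub-case is where the whole construction is decided: expanding $(v.v').v''$ and $v.(v'.v'')$ and pairing against a test vector $v'''\in V$ turns the bimodule terms $f\circ L_{v'}$, $(-1)^{|v|\,|g|}g\circ R_v$ and the maps $\nu,\nu'$ into precisely the four $\gamma$-evaluations and the two $B(\lambda,\lambda)$-terms of condition (\ref{eq6}), once one invokes the defining formulas $\nu(v,x)(v')=B(x,\lambda(v',v))$, $\nu'(x,v)(v')=B(x,\lambda(v,v'))$ and the supersymmetry of $B$. Since a functional in $P(V^*)$ is determined by its values on $V$, non-degeneracy of the evaluation pairing upgrades this equality of functionals to equality in $P(V^*)$, so condition (\ref{eq6}) is exactly what forces associativity here; the remaining mixed $P(V^*)$-cases close using the bimodule axioms for $(l^*,r^*)$ and the definition of $\psi$.

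Finally I would verify associativity of $\tilde{B}$, namely $\tilde{B}(a.b,c)=\tilde{B}(a,b.c)$, component by component. The $W\times W\times W$ part is invariance of $B$; the parts mixing $V$ and $W$ rest on the adjointness relation defining $\mu^*$ and again on the formulas for $\nu,\nu',\psi$, which were tailored so that these cross-pairings coincide; and the all-$V$ part, where $\tilde{B}(v.v',v'')=\gamma(v,v')(v'')$ must equal $\tilde{B}(v,v'.v'')=\gamma(v',v'')(v)$, is exactly the cyclic symmetry (\ref{eq5}). Combining the grading check, the supersymmetry and non-degeneracy of $\tilde{B}$, the associativity of the product, and the invariance of $\tilde{B}$, we conclude that $(A,.,\tilde{B})$ is an odd-symmetric associative superalgebra, as claimed.
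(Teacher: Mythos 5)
The paper offers no proof of Theorem \ref{deg} at all: the construction is stated and the text moves immediately to its converse (Theorem \ref{desind}), leaving the verification implicit as a direct computation. Your proposal supplies exactly that computation, and its architecture is sound: the observation that the $V$-component of any product is $v\star v'$, that the $W$-component is $\lambda(v,v')+\mu(v)(y)+\mu^*(v')(x)+x*y$, and that everything else lands in the ideal $P(V^*)$ (which annihilates $W$ and itself) is the right way to organize the case analysis; your parity count for $\psi$ and the conclusion that $\tilde{B}$ is odd and symmetric (the supersign being trivial on non-vanishing pairs) are correct; the matchings you assert do hold, namely condition (\ref{eq1}) for the triple $(v,x,y)$, (\ref{eq2}) for $(v,x,v')$, (\ref{eq3}) for $(v,v',x)$, (\ref{eq4}) for the all-$V$ $W$-component, (\ref{eq5}) for $\tilde{B}(v.v',v'')=\tilde{B}(v,v'.v'')$, and (\ref{eq6}) for the all-$V$ part of the $P(V^*)$-component after evaluation against $v'''$ via the defining formulas of $\nu,\nu'$. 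One point deserves to be made explicit rather than absorbed into ``the remaining $W$-contributions collapse'': the triples $(x,y,v)$ and $(x,v,v')$ require the \emph{adjoint} identities $\mu^*(v)(x*y)=x*\mu^*(v)(y)$ and $\mu^*(v')\circ\mu^*(v)=R_{\lambda(v,v')}+\mu^*(v\star v')$, which are not among the axioms (\ref{eq1})--(\ref{eq4}) but follow from (\ref{eq1}) and (\ref{eq3}) by pairing against a test vector and using the associativity, symmetry and non-degeneracy of $B$ --- this is precisely the mechanism (``invariance of $B$'') you invoke, so it is a presentational thinness, not a mathematical gap. Incidentally, carrying out the all-$V$ computation shows the required identity is $\lambda(v\star v',v'')+\mu^*(v'')(\lambda(v,v'))=\lambda(v,v'\star v'')+\mu(v)(\lambda(v',v''))$, whereas condition (\ref{eq4}) as printed ends with $\lambda(v''\star v,v')$; this appears to be a typo in the paper that your verification implicitly corrects.
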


In the following, we prove the Theorem \ref{desind} below and which is the converse of Theorem \ref{deg}.\\ 

\begin{lemma}\label{lemmepreuve1} 
Let $(A,B)$ be an odd-symmetric $B$-irreducible non-simple associative superalgebra, $I$ a minimal graded two-sided ideal of $A$ and $J$ its orthogonal with respect to $B$. Then $J/I$ is an odd-symmetric associative superalgebra.\\
\end{lemma}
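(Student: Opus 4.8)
The plan is to equip the quotient $J/I$ with the bilinear form inherited from $B$ and to verify the four defining properties of an odd-symmetric structure: oddness, supersymmetry, associativity, and non-degeneracy. Before that, I would record two preliminary facts. Since $(A,B)$ is odd-symmetric, Proposition~\ref{caracterisationimpaire} gives $\dim A_{\bar{0}}=\dim A_{\bar{1}}$, so $\dim A$ is even and in particular $\dim A>1$; hence Remark~\ref{minimalirr} applies and the minimal ideal $I$ is totally isotropic, i.e. $B(I,I)=\left\{0\right\}$ and $I\subseteq J$. Moreover, by Lemma~\ref{Iideal} the orthogonal $J$ is a graded two-sided ideal of $A$, and since $I\subseteq J$ is itself a graded two-sided ideal of $A$ it is a fortiori a graded two-sided ideal of $J$. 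Thus the quotient superalgebra $J/I$, with product $\bar{x}.\bar{y}:=\overline{x.y}$ and grading $(J/I)_{\alpha}=J_{\alpha}/I_{\alpha}$, is well defined.

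Next I would set $Q(\bar{x},\bar{y}):=B(x,y)$ for $x,y\in J$ and check that it is well posed. If $i,j\in I$, then $B(x+i,y+j)=B(x,y)+B(x,j)+B(i,y)+B(i,j)$, and each of the last three terms vanishes: $B(x,j)=0$ and $B(i,y)=0$ because $x,y\in J=I^{\perp}$, while $B(i,j)=0$ because $I$ is totally isotropic. So $Q$ depends only on the classes $\bar{x},\bar{y}$. Oddness, supersymmetry and associativity of $Q$ are then immediate transcriptions of the corresponding properties of $B$: oddness because $B(J_{\alpha},J_{\beta})\subseteq \K_{\alpha+\beta+\bar{1}}$; supersymmetry because $B(x,y)=(-1)^{\mid x\mid \mid y\mid}B(y,x)$; and associativity because $Q(\bar{x}.\bar{y},\bar{z})=B(x.y,z)=B(x,y.z)=Q(\bar{x},\bar{y}.\bar{z})$.

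The main obstacle is the non-degeneracy of $Q$, and this is where I would concentrate the effort. Suppose $x\in J$ satisfies $B(x,y)=0$ for all $y\in J$; I must show $x\in I$, i.e. $\bar{x}=0$. The condition says exactly that $x\in J^{\perp}$. Here I use finite-dimensionality together with the non-degeneracy of $B$ on $A$, which yields the bi-orthogonal identity $J^{\perp}=(I^{\perp})^{\perp}=I$. Concretely, $\dim J=\dim I^{\perp}=\dim A-\dim I$, so $\dim J^{\perp}=\dim I$; and $I\subseteq J^{\perp}$ since $B(I,J)=\left\{0\right\}$ by supersymmetry and the definition of $J$, so the equality of dimensions forces $J^{\perp}=I$. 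Therefore $x\in I$, hence $\bar{x}=0$, and $Q$ is non-degenerate. This completes the verification that $(J/I,Q)$ is an odd-symmetric associative superalgebra; the argument is the exact odd analogue of the construction of $(A_{1},Q)$ in Theorem~\ref{descriptionnov}(1).
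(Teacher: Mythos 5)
Your proof is correct and follows essentially the same route as the paper's: establish that $I$ is totally isotropic (so $I\subseteq J$), endow $J/I$ with the quotient product and the form $Q(\bar{x},\bar{y})=B(x,y)$, and check that this is a well-defined odd-symmetric structure. The paper leaves the non-degeneracy of $Q$ as an easy check; your bi-orthogonality argument $J^{\perp}=(I^{\perp})^{\perp}=I$ is exactly the standard way to fill in that detail.
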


\begin{proof}
 The fact that $A$ is non-simple $B$-irreducible implies that the two-sided ideal $I$ is totally isotropic ( i.e $I\subseteq J$). It is clear that $J/I$ with product $(x+I)(y+I)= x.y + I,\ \ \forall x,y\in J$ is an associative superalgebra. In addition, if we consider the following bilinear form $Q$ on $J/I$ defined by $$Q(x+I, y+I):= B(x,y),\ \  \forall x,y\in J,$$ then, we can check easily that it is non-degenerate. So, we deduce that $(J/I,Q)$ is an odd-symmetric associative superalgebra.\\
\end{proof}

\begin{lemma}\label{lemmepreuve2}
Let $(A,B)$ be an odd-symmetric $B$-irreducible non-simple associative superalgebra, $I$ a minimal graded two-sided ideal $A$ and $J$ its orthogonal with respect to $B$. If we suppose that $A=J\oplus V$, where $V$ is a $\Z_{2}$-graded vector space of $A$ and $W:= I\oplus V$. Then $W^{\bot}$, the orthogonal of $W$ with respect to $B$, admits a structure of an odd-symmetric associative superalgebra which is isomorphic to the odd-symmetric associative superalgebra $(J/I,Q)$.\\ 
\end{lemma}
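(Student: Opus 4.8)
The plan is to realize $(J/I,Q)$ concretely inside $A$ as the subspace $W^{\bot}$, and to use the canonical projection $\pi:J\longrightarrow J/I$, restricted to $W^{\bot}$, as the desired isomorphism. First I would record the starting data: since $A$ is non-simple and $B$-irreducible with $dim A>1$, Remark \ref{minimalirr} gives that $I$ is totally isotropic, hence $I\subseteq J$; and by Lemma \ref{Iideal} $(i)$, $J$ is a graded two-sided ideal with $I.J=J.I=\left\{0\right\}$. Because $B$ is odd and homogeneous, $W=I\oplus V$ is a graded subspace and so is its orthogonal $W^{\bot}$; moreover $I\subseteq W$ immediately gives $W^{\bot}\subseteq I^{\bot}=J$.

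The first real step is to prove that $W$ is non-degenerate. The key observation is that the decomposition $A=J\oplus V$ with $J=I^{\bot}$ forces the pairing $I\times V\to\K$, $(i,v)\mapsto B(i,v)$, to be non-degenerate: the map $a\mapsto B(a,\cdot){\mid}_{I}$ has kernel exactly $J$, hence induces an isomorphism $A/J\cong I^{*}$, and its restriction to the complement $V$ is an injective map $V\to I^{*}$ between spaces of equal dimension $dim V=dim A-dim J=dim I$, thus an isomorphism. Combining this non-degenerate pairing with the fact that $I$ is totally isotropic, a short computation on $w=i_{0}+v_{0}\in W$ satisfying $B(w,W)=\left\{0\right\}$ (test against $I$ to kill $v_{0}$, then against $V$ to kill $i_{0}$) yields $w=0$, so ${B\mid}_{W\times W}$ is non-degenerate. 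Consequently $A=W\oplus W^{\bot}$, and a dimension count together with $W^{\bot}\cap I\subseteq W^{\bot}\cap W=\left\{0\right\}$ gives the internal decomposition $J=I\oplus W^{\bot}$.

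With $J=I\oplus W^{\bot}$ in hand, the restriction $\pi{\mid}_{W^{\bot}}:W^{\bot}\longrightarrow J/I$ is a grading-preserving linear isomorphism. I would then transport the structure: for $x,y\in W^{\bot}\subseteq J$ one has $x.y\in J$ (as $J$ is an ideal), and decomposing $x.y=(x.y)_{I}+(x.y)_{W^{\bot}}$ along $J=I\oplus W^{\bot}$, the product $x\cdot y:=(x.y)_{W^{\bot}}$ on $W^{\bot}$ satisfies $\pi(x\cdot y)=\pi(x.y)=\pi(x)\pi(y)$, since $\pi$ is an algebra homomorphism annihilating $I$. Thus $\pi{\mid}_{W^{\bot}}$ intertwines $\cdot$ with the product of $J/I$. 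Moreover, because $W^{\bot}\subseteq J$, the form is preserved on the nose: $Q(\pi(x),\pi(y))=B(x,y)$ for all $x,y\in W^{\bot}$. Hence $\pi{\mid}_{W^{\bot}}$ is a grading-preserving bijection intertwining both products and both bilinear forms; since $(J/I,Q)$ is an odd-symmetric associative superalgebra by Lemma \ref{lemmepreuve1}, the transported data $(W^{\bot},\cdot,{B\mid}_{W^{\bot}\times W^{\bot}})$ is an odd-symmetric associative superalgebra isomorphic to $(J/I,Q)$.

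The main obstacle is the non-degeneracy of $W$: the splitting $A=W\oplus W^{\bot}$, the decomposition $J=I\oplus W^{\bot}$, and hence the whole isomorphism rest on it, and it in turn depends on extracting the non-degeneracy of the $I\times V$ pairing purely from the hypothesis $A=J\oplus V$. Once the product on $W^{\bot}$ is correctly taken to be the $A$-product followed by projection along $I$ (it is \emph{not} the bare restriction, since $W^{\bot}$ need not be a subalgebra of $A$), associativity, supersymmetry, oddness and non-degeneracy of the transported structure are automatic by transport of structure and require no separate verification.
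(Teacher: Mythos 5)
Your proposal is correct and follows essentially the same route as the paper: both establish that $W=I\oplus V$ is non-degenerate, deduce $A=I\oplus W^{\bot}\oplus V$, equip $W^{\bot}$ with the $A$-product followed by projection along $I$ (the paper's $\alpha(x,y)$ is exactly your $(x.y)_{W^{\bot}}$), and exhibit the restriction of the canonical surjection $J\longrightarrow J/I$ to $W^{\bot}$ as the isomorphism. The only differences are cosmetic: you supply the details of the non-degeneracy of $W$ that the paper dismisses as ``a simple computation,'' and you obtain the associativity and the odd-symmetric structure on $W^{\bot}$ by transport along $\pi{\mid}_{W^{\bot}}$ rather than by the paper's direct verification using the associativity of $A$.
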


\begin{proof}
By a simple computation, we check that $W$ is a non-degenerate $\Z_{2}$-graded sub-space vector of $A$. Hence, $A:= W\oplus W^{\bot}= I\oplus W^{\bot}\oplus V$ and $B_{W^{\bot}}:= B_{\mid W^{\bot}\times W^{\bot}}$ is non-degenerate. Let $x,y\in W^{\bot}$, as $W^{\bot}\subseteq J$, then $x.y= \alpha(x,y)+ \beta(x,y)$, where $\alpha(x,y)\in W^{\bot}$ and $\beta(x,y)\in I$. Using the associativity of $A$, we show easily that $(W^{\bot}, \alpha)$ is an associative superalgebra and $B_{W^{\bot}}$ define an odd-symmetric structure on $W^{\bot}$. Consequently $(W^{\bot}, \alpha, B_{W^{\bot}})$ is an odd-symmetric associative superalgebra and the restriction of the surjection $s: J\longrightarrow J/I$ to $W^{\bot}$ is an isomorphism of associative superalgebras .\\
\end{proof}

\begin{theorem}\label{desind}
Let $(A,B)$ be an odd-symmetric $B$-irreducible non-simple associative superalgebra, $I$ a minimal graded two-sided ideal of $A$ and $J$ its orthogonal with respect to $B$. We suppose that $A=J\oplus V$, where $V$ is a $\Z_{2}$-graded vector space of $A$, then $A$ is isomorphic to the generalized double extension of the odd-symmetric associative superalgebra $J/I$ by $V$ by means of $(\mu, \lambda, \gamma)$.\\
\end{theorem}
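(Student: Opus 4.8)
The plan is to realize the three summands of the generalized double extension inside $A$ itself and then read off the structure maps from the multiplication of $A$, finally checking that the resulting data is a context in the sense of Theorem~\ref{deg}. \textbf{Setup.} Since $A$ is $B$-irreducible and non-simple, Remark~\ref{minimalirr} gives that $I$ is totally isotropic, so $I\subseteq J$, and by Lemma~\ref{lemmepreuve2} the orthogonal $W^{\bot}$ of $W:=I\oplus V$ is an odd-symmetric associative superalgebra isomorphic to $(J/I,Q)$, with $A=I\oplus W^{\bot}\oplus V$. Because $I=J^{\bot}$ is totally isotropic and $B$ is odd and non-degenerate, the induced pairing $I\times V\to\K$ is non-degenerate and reverses parity; hence the map $\theta\colon P(V^{*})\to I$ determined by $B(\theta(f),v)=f(v)$ for all $v\in V$ is an even linear isomorphism, which is exactly what realizes $I$ as the $P(V^{*})$-part of the extension. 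I may and do choose the graded complement $V$ to be totally isotropic (a graded Lagrangian complement to $I$ in the non-degenerate space $W$ always exists), so that $B(V,V)=\left\{0\right\}$; this does not affect the hypothesis $A=J\oplus V$ up to the identification $V\cong A/J$.

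\textbf{Definition of the maps.} For $v,v'\in V$ I would decompose their product along $A=V\oplus W^{\bot}\oplus I$ and take $v\star v'$, $\lambda(v,v')$ and $\gamma(v,v')$ to be, respectively, its $V$-, $W^{\bot}$- and $I$-components, the last transported to $P(V^{*})$ by $\theta^{-1}$; thus $\star$ is the product of $A/J\cong V$, $\lambda(v,v')\in W^{\bot}\cong J/I$, and $\gamma(v,v')(v'')=B(v\cdot v',v'')$. Since $J$ is an ideal, $v\cdot y\in J=I\oplus W^{\bot}$ for $y\in W^{\bot}$, and I define $\mu(v)(y)$ to be its $W^{\bot}$-component, i.e. the left action of $v$ on $J/I$, the remaining $I$-component being $\nu(v,y)$. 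The associativity of $B$ then forces $\mu(v)^{*}$ (its adjoint for $B$) to coincide with right multiplication by $v$ on $J/I$, and the invariance and supersymmetry of $B$ identify the residual components of the products of $A$ with the maps $\nu(v,x)(v')=B(x,\lambda(v',v))$, $\nu'(x,v)(v')=B(x,\lambda(v,v'))$ and $\psi(x,y)(v)=B(\mu(v)(x),y)$ prescribed in Theorem~\ref{deg}; likewise $P(V^{*})$ acquires the $V$-bimodule structure $(l^{*},r^{*})$ from $I\cdot V,\ V\cdot I\subseteq I$.

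\textbf{Verification of the context.} Each of the conditions (\ref{eq1})--(\ref{eq6}) should be obtained by expanding one associativity relation of $A$ and projecting it onto the appropriate summand: (\ref{eq1}) and (\ref{eq3}) come from $(v\cdot x)\cdot y=v\cdot(x\cdot y)$ and $(v\cdot v')\cdot x=v\cdot(v'\cdot x)$ projected to $W^{\bot}$; (\ref{eq2}) expresses that left and right multiplications by $V$ commute on $J/I$; (\ref{eq5}) is the cyclicity $B(v\cdot v',v'')=B(v,v'\cdot v'')$ combined with supersymmetry; and (\ref{eq4}), (\ref{eq6}) are the $W^{\bot}$- and scalar components of the associativity identity for three elements of $V$, rewritten through $B$. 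I expect the genuine work to be concentrated in (\ref{eq4}) and especially the cocycle-type identity (\ref{eq6}), together with the systematic control of the Koszul signs produced by the oddness of $B$; the isotropy of $V$ is precisely what makes the $B(V,V)$-terms drop out so that (\ref{eq6}) closes.

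\textbf{Conclusion.} Finally I would assemble the map $\Phi\colon P(V^{*})\oplus (J/I)\oplus V\to A$, $f+\bar{x}+v\mapsto \theta(f)+\sigma(\bar{x})+v$, where $\sigma\colon J/I\to W^{\bot}$ is the inverse of the isomorphism of Lemma~\ref{lemmepreuve2}. By construction $\Phi$ is a graded linear bijection; the identifications above show it carries the generalized double extension product of Theorem~\ref{deg} to the product of $A$, and the splitting $\tilde{B}(f+\bar{x}+v,\,g+\bar{y}+w)=Q(\bar{x},\bar{y})+f(w)+g(v)$ matches $B$ because $I$ and (the chosen) $V$ are isotropic, $B(W^{\bot},I\oplus V)=\left\{0\right\}$, and $B(\theta(f),w)=f(w)$. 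Hence $A$ is isomorphic, as an odd-symmetric associative superalgebra, to the generalized double extension of $J/I$ by $V$ by means of $(\mu,\lambda,\gamma)$.
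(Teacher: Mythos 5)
Your proof is correct and follows essentially the same route as the paper's: the decomposition $A=I\oplus W^{\bot}\oplus V$ supplied by Lemmas \ref{lemmepreuve1} and \ref{lemmepreuve2}, the structure maps $\mu$, $\lambda$, $\gamma$ read off as the $W^{\bot}$- and $I$-components of products by elements of $V$, the verification of (\ref{eq1})--(\ref{eq6}) by projecting associativity identities of $A$, and a final isomorphism $f+\bar{x}+v\mapsto \theta(f)+\sigma(\bar{x})+v$ which is exactly the inverse of the paper's map $\Delta(i+x+v)=B(i,.)+s(x)+v$. The one point where you genuinely depart from the paper is the replacement of the given complement $V$ by a totally isotropic (graded Lagrangian) one, and this is not a redundant precaution: in the generalized double extension of Theorem \ref{deg} one has $\tilde{B}(V,V)=\left\{0\right\}$, whereas for an arbitrary graded complement of $J$ only $B(V_{\bar{0}},V_{\bar{0}})=B(V_{\bar{1}},V_{\bar{1}})=\left\{0\right\}$ is automatic (because $B$ is odd), while $B(V_{\bar{0}},V_{\bar{1}})$ may be nonzero; in that case the paper's $\Delta$ is not an isometry, and its $P(V^*)$-component is not even multiplicative on products $v.v'$ of elements of $V$, since $\gamma(v,v')=B(v.v',.)$ then differs from $B$ paired with the $I$-component of $v.v'$ by the surviving term coming from the $V$-component. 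So the paper's proof, read literally for an arbitrary $V$, has a small gap which your Lagrangian choice repairs; your existence argument for such a complement inside the non-degenerate subspace $W$ is sound (an annihilator construction using the odd pairing $W_{\bar{0}}\times W_{\bar{1}}\to \K$), and since any graded complement carries the algebra structure of $A/J$, the conclusion of Theorem \ref{desind} is unaffected by this substitution.
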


\begin{proof}
Following Lemma \ref{lemmepreuve1} and Lemma \ref{lemmepreuve2}, we have $A= I\oplus W^{\bot}\oplus V$ where $W^{\bot}$ is the orthogonal of $W:= I\oplus V$ such that $W$ and $J/I$ are isomorphic. So to prove the theorem it remains to define a context of generalized double extension of $J/I$ by $V$. For this reason, we consider the following three even maps
 \begin{eqnarray*}
\mu &:& V \rightarrow End(J/I)\ \ ;\ \ \mu(v)(x+I) = v.x + I\\
\la &:& V\times V \rightarrow  J/I\ \ ;\ \  \la(v,w) = v . w + I \\
\gamma &:& V\times V \rightarrow P({V}^{*}) \ \ ; \ \ \gamma (u,v) (w) = B(u . v , w). 
\end{eqnarray*} 
By the associativity of $A$ and the associativity of $B$, we check easily that $\left\{J/I, Q, V \mu, \la, \gamma\right\}$ form a context of generalized double extension of the odd-symmetric associative superalgebra  $J/I$ by $V$ and consequently we can consider $\widetilde{A}=P(V^*)\oplus J/I\oplus V$ the generalized double extension of $(A,B)$ by $V$ by means of $(\mu,\la,\gamma)$. Now, consider the following linear map 
\begin{eqnarray*}
\Delta : A &\longrightarrow& \widetilde{A}\\
i+x+v &\longrightarrow& B(i,.) + s(x)+v.
\end{eqnarray*}
We can check easily that $\Delta$ is an isomorphism of odd-symmetric associative superalgebras such that $\widetilde{B}(\Delta(x),\Delta(y))= B(x,y)$.\\
\end{proof}

\begin{proposition}\label{irre}
Let $(A,B)$ be an odd-symmetric $B$-irreducible non-simple associative superalgebra, then $A$ is either a generalized double extension of an odd-symmetric associative superalgebra by an element of $\left\{M_{r,s}(\K), Q_n(\K),\  r\geq 1, s\geq 0, n\geq 1\right\}$ or a generalized double extension of an odd-symmetric associative superalgebra by a one-dimensional superalgebra with null product.\\
\end{proposition}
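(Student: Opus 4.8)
The plan is to exhibit $A$ as a generalized double extension by invoking the converse construction of Theorem \ref{desind}, and then to identify the ``top'' piece $V$ of the extension with the quotient $A/J$, whose isomorphism type is pinned down by Lemma \ref{quotient}. Since $A$ is $B$-irreducible and non-simple with non-null product, and an odd non-degenerate form pairs $A_{\bar{0}}$ with $A_{\bar{1}}$ (forcing $\dim A_{\bar{0}}=\dim A_{\bar{1}}$), we have $\dim A>1$; hence Remark \ref{minimalirr} and Lemma \ref{quotient} are available.

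First I would pick a minimal graded two-sided ideal $I$ of $A$, which exists because $A$ is non-simple. By Remark \ref{minimalirr}, $I$ is totally isotropic, so $I\subseteq J$, where $J:=I^{\bot}$ is the orthogonal of $I$ with respect to $B$. Since $B$ is non-degenerate, I choose a graded vector-space complement $V$ of $J$, giving $A=J\oplus V$. By Lemma \ref{lemmepreuve1}, $(J/I,Q)$ is an odd-symmetric associative superalgebra, and Theorem \ref{desind} then applies verbatim: $A$ is isomorphic to the generalized double extension of $(J/I,Q)$ by $V$ by means of the maps $(\mu,\lambda,\gamma)$ constructed there.

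It remains to identify the superalgebra $(V,\star)$ appearing in this extension. Unwinding the product of Theorem \ref{deg}, the $V$-component of a product of two elements of $V$ is exactly $v\star v'$, so $\star$ is the projection onto $V$, along $J$, of the product of $A$; since $J$ is an ideal, the canonical projection $A\to A/J$ restricts to an isomorphism $(V,\star)\cong A/J$ of associative superalgebras. By Lemma \ref{quotient}$(i)$, $A/J$ is either a simple associative superalgebra or a one-dimensional superalgebra with null product. In the first case the classification of \cite{Al} gives $V\cong M_{r,s}(\K)$ or $V\cong Q_{n}(\K)$; in the second case $V$ is the one-dimensional superalgebra with null product. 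In either situation $A$ is a generalized double extension of the odd-symmetric superalgebra $J/I$ by an algebra of the required type, which is the assertion.

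The routine verifications are delegated to the cited results; the only genuinely new point is the identification $(V,\star)\cong A/J$, which is the step I expect to require care, since $V$ is a priori only a graded complement and one must check that the superalgebra structure $\star$ that Theorem \ref{deg} places on it coincides with the quotient structure. This follows by comparing the $V$-components on both sides of the isomorphism $\Delta$ of Theorem \ref{desind}, but it is the place where a projection or a parity sign could be mishandled.
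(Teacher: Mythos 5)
Your proof is correct and follows essentially the same route as the paper's: both rest on applying Theorem \ref{desind} to a minimal (hence, by Remark \ref{minimalirr}, totally isotropic) graded two-sided ideal $I$ with a graded complement $V$ of $J=I^{\bot}$, and on Lemma \ref{quotient} to identify $A/J$ (hence $V$) as either simple, so in $\left\{M_{r,s}(\K), Q_n(\K)\right\}$, or one-dimensional with null product. The only difference is organizational: the paper splits into the cases $Ann(A)\neq \left\{0\right\}$ (choosing $I=\K e\subseteq Ann(A)$, which by Lemma \ref{quotient}(ii) yields the one-dimensional branch) and $Ann(A)=\left\{0\right\}$ (yielding the simple branch), whereas you get the dichotomy in one stroke from Lemma \ref{quotient}(i); your explicit check that $(V,\star)\cong A/J$ as associative superalgebras spells out a point the paper asserts without comment.
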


 \begin{proof} 
 We consider the following two cases:\\
  
 \textbf{First case:} We suppose that $Ann(A)\neq \left\{0\right\}$ and we consider $I=\K e$ where $e\in Ann(A)\setminus \left\{0\right\}$. We denoted by $J$ the orthogonal of $I$ with respect to $B$. The fact that $B$ is non-degenerate implies that there exist a homogeneous element $d\in A$ such that $A= J\oplus \K d$ and $B(e,d)\neq 0$. According to Lemma \ref{quotient}, we have $A/J$ is a one-dimensional superalgebra with null product. Hence, we deduce that $V:=\K d$ is a one-dimensional superalgebra with null product. Now, applying Theorem \ref{desind}, we obtain that $A$ is a generalized double extension of the odd-symmetric associative superalgebra $(J/I,Q)$ by $V:= \K d$, where $V$ is with null product.\\
 
 \textbf{Second case:} We suppose that $Ann(A)= \left\{0\right\}$. Since $A$ is non-simple, then there exist a minimal graded two-sided ideal $I$ of $A$. The fact that $A$ is B-irreducible, implies that $I$ is totally isotropic ideal. So, we have $I\subseteq J$. On the other hand, as $B$ is non-degenerate, then there exist $V$ a $\Z_{2}$-graded vector space of $A$ such that $A= J\oplus V$ and ${B\mid}_{I\times V}$ is non-degenerate. According to Lemma \ref{quotient} and since $A/J \cong V$ as associative superalgebras, we obtain that $V$ is a simple associative superalgebras and so $V\in \left\{M_{r,s}(\K), Q_n(\K)\right\}$. Now, applying Theorem \ref{desind}, we deduce that $A$ is a generalized double extension of $(J/I, Q)$ by $V$, where $V\in \left\{M_{r,s}(\K), Q_n(\K)\right\}$.\\ 
 \end{proof}

\begin{theorem}
Let $(A,B)$ be an odd-symmetric associative superalgebra. If $A\notin \left\{ \left\{0\right\}, Q_n(\K), n\geq 1\right\}$, then $A$ is obtained from a finite number of element of $\left\{\left\{0\right\}, Q_n(\K), n\geq 1\right\}$ by a finite sequence of generalized double extensions by element of $\left\{M_{r,s}(\K), Q_n(\K)\right\}$ and/or generalized double extensions by a one-dimensional superalgebra with null product and/or orthogonal direct sums.\\
\end{theorem}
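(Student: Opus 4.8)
The plan is to argue by induction on $\dim A$, feeding the two structural results already established into a purely dimensional bookkeeping: Proposition \ref{depimp}, which decomposes any homogeneous-symmetric associative superalgebra into an orthogonal direct sum of $B$-irreducible graded two-sided ideals, and Proposition \ref{irre}, which presents a $B$-irreducible non-simple odd-symmetric superalgebra as a generalized double extension of a smaller odd-symmetric superalgebra. First I would record the base of the induction. Since $B$ is odd and non-degenerate, it annihilates $A_{\bar{0}}\times A_{\bar{0}}$ and $A_{\bar{1}}\times A_{\bar{1}}$ and pairs $A_{\bar{0}}$ with $A_{\bar{1}}$; hence $\dim A_{\bar{0}}=\dim A_{\bar{1}}$, so $\dim A$ is even and $\dim A=0$ forces $A=\{0\}$, already an element of the base set. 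For the inductive step I would assume the statement for every odd-symmetric associative superalgebra of dimension strictly less than $\dim A$, with the convention that a member of $\{\{0\},Q_n(\K)\}$ is trivially ``obtained'' from itself, and then split according to whether $A$ is $B$-irreducible.

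If $A$ is not $B$-irreducible, Proposition \ref{depimp} writes $A=\bigoplus_{k=1}^{m}A_k$ as an orthogonal direct sum of at least two nonzero $B$-irreducible graded two-sided ideals. Each $A_k$ inherits from $B$ an odd, supersymmetric, associative, non-degenerate bilinear form, so it is itself an odd-symmetric associative superalgebra, and $\dim A_k<\dim A$ because there are at least two summands. Applying the induction hypothesis to each $A_k$ exhibits it as built from the base set by the allowed operations, and forming their orthogonal direct sum, which is one of the permitted operations, yields the desired description of $A$.

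If $A$ is $B$-irreducible, I would first dispose of the simple case: by Proposition \ref{simpleimpaire} a simple odd-symmetric superalgebra is some $Q_n(\K)$, so it already lies in the base set and nothing is to be proved. Otherwise $A$ is $B$-irreducible and non-simple, and Proposition \ref{irre} realizes $A$ as a generalized double extension of an odd-symmetric associative superalgebra $W$, either by an element $V\in\{M_{r,s}(\K),Q_n(\K)\}$ or by a one-dimensional superalgebra with null product. The single point that makes the induction close is the strict drop in dimension: the underlying space of the double extension is $P(V^*)\oplus W\oplus V$, so $\dim A=\dim W+2\dim V$ with $\dim V\geq 1$, whence $\dim W\leq\dim A-2<\dim A$. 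The induction hypothesis then applies to $W$, and prepending the corresponding generalized double extension gives the description of $A$.

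I do not expect a serious obstacle here, since the argument is essentially the inductive packaging of the earlier propositions; the hard part is simply the vigilance that each reduction genuinely lowers the dimension, which is guaranteed by $\dim V\geq 1$ in the double-extension case and by the presence of at least two summands in the reducible case. The only degenerate small case to read off explicitly is the two-dimensional odd-symmetric superalgebra ${\cal R}$ with null product, arising when $Ann(A)\neq\{0\}$: here $W=J/I=\{0\}$, so ${\cal R}$ is recovered as the generalized double extension of $\{0\}$ by the one-dimensional superalgebra with null product, consistently with the base set.
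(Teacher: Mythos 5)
Your proof is correct and follows essentially the same route the paper intends: the theorem is the inductive packaging of Proposition \ref{depimp} (orthogonal splitting in the non-$B$-irreducible case), Proposition \ref{simpleimpaire} (simple case gives $Q_n(\K)$), and Proposition \ref{irre} (generalized double extension in the $B$-irreducible non-simple case), exactly mirroring the paper's proof of Corollary \ref{desinduc} in the even-symmetric setting. Your dimension bookkeeping ($\dim A=\dim W+2\dim V$ with $\dim V\geq 1$, and at least two summands in the reducible case) correctly supplies the only details the paper leaves implicit.
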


\providecommand{\href}[2]{#2}


\begin{thebibliography}{1}

\bibitem{He} H.~Albuquerque, E.~Barreiro and S.~Benayadi, \emph{{Odd-quadratic Lie superalgebras}},  Journal of Geometry and Physics, 60. no 2 (2010), 230-250.

\bibitem{An} A.~Aubert, \emph{{Structures affines et pseudo-metriques invariantes \`a gauche sur des groupes de Lie}}, Th\`ese, Universit\'e de Montpellier II (1996).
 
\bibitem{Im} I.~Ayadi and S.~Benayadi, \emph{{Symmetric Novikov superalgebras}}, Journal of Mathematical Physics, 51, 052301 (2010).

\bibitem{Ig} I.~Bajo, S.~Benayadi and M.~Bordemann, \emph{{Generalized double extension and descriptions of quadratic Lie superalgebras}}, arxiv:0712.0228v1[math-ph] (2007).

\bibitem{Am} A.~Baklouti and S.~Benayadi, \emph{{symmetric symplectic associative commutative algebras and related Lie algebras}}, to appear in Algebra colloquium.

\bibitem{Bo} M.~Bordemann, \emph{{Nondegenerate invariant bilinear forms on nonassociative algebras}}, Acta Math. Univ Comen., 66, 2 (1997), 151-201.

\bibitem{Al} A.~Elduque, J.~Laliena and S.~Sacristan, \emph{{Maximal subalgebras of Jordan superalgebras}},   Journal of Pure and Applied Algebra, 212, (2008), 2461--2478.

\bibitem{Me} A.~Medina and P.~Revoy, \emph{{Alg\`ebres de Lie et produit scalaire invariant}},  Ann. Scient. Ec. Norm. sup., 4 s\'erie, t. 18, (1985).

\bibitem{Sc} M.~Scheunert, \emph{{The theory of Lie Superalgebras}},  Lectures Notes in Mathematics, vol. 716, Springer-verlag Berlin Heidelberg, 1979.

\bibitem{Wl} C.T.C.~Wall, \emph{{Graded Brauer Groups}}, J. Reine Angew. Math. 213 (1964) 187--199.  
\end{thebibliography}
\end{document}